%
%
%
\documentclass{gtpart}     
%
%
%
%
%
\usepackage{graphicx}  
%
%
%

\title{Roots, symmetries and conjugacy of pseudo-Anosov mapping classes.}

%
\author[J Fehrenbach]{J\'er\^ome Fehrenbach}
\givenname{J\'er\^ome}
\surname{Fehrenbach}
\address{Institut de Math\'ematiques de Toulouse\\31062 TOULOUSE CEDEX 09\\France }
\email{jerome.fehrenbach@iut-tlse3.fr}
\urladdr{}

%
\author[J Los]{J\'er\^ome Los}
\givenname{J\'er\^ome}
\surname{Los}
\address{Universit\'e de Provence, LATP, UMR CNRS 6632\\
39 Rue F. Joliot Curie\\
13453 Marseille cedex 13\\
France}
\email{los@cmi.univ-mrs.fr}
\urladdr{}

%
%
%
%

\keyword{mapping class group}\subject{primary}{msc2000}{37E30}\subject{secondary}{msc2000}{20F65}
\keyword{pseudo-Anosov}\subject{primary}{msc2000}{37D20}
\keyword{train track}\subject{primary}{msc2000}{37B10}

%

\arxivreference{}
\arxivpassword{}

%
%
\volumenumber{}
\issuenumber{}
\publicationyear{}
\papernumber{}
\startpage{}
\endpage{}
\doi{}
\MR{}
\Zbl{}
\received{}
\revised{}
\accepted{}
\published{}
\publishedonline{}
\proposed{}
\seconded{}
\corresponding{}
\editor{}
\version{}

%
%
%
%
%
%
\newtheorem{thm}{Theorem}[section]    
\newtheorem{proposition}{Proposition} 
\newtheorem{lem}[thm]{Lemma}          
%
\theoremstyle{definition}
\newtheorem{defn}[thm]{Definition}    
%
%


\begin{document}

\begin{abstract}    
An algorithm is proposed that solves two decision problems for pseudo-Anosov elements in the mapping class group of a surface with at least one marked fixed point. The first problem is the root problem: decide if the element is a power and in this case compute the roots. The second problem is the symmetry problem: decide if the element commutes with a finite order element and in this case compute this element. The structure theorem on which this algorithm is based provides also a new solution to the conjugacy problem.
\end{abstract}

\maketitle


\section{Introduction}

We solve two algorithmic problems about pseudo-Anosov elements in the mapping class group of punctured surfaces with at least one fixed puncture. 

\begin{thm} 
\label{th:1.1}
Let $S$ be a surface with $n+1, n \geqslant 0$ marked points 
$\{x_0, x_1, ..., x_n\}$ and $[f]$ a pseudo-Anosov mapping class fixing $x_0$ and $\{x_1, ..., x_n\}$ setwise.\\
There exists a finite algorithm to decide whether $[f]$ is a power and in this case the root is computed.\\
There exists a finite algorithm to decide whether $[f]$ commutes with a finite order element fixing $x_0$ and which computes it if it exists.
\end{thm}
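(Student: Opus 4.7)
The plan is to base the whole algorithm on a canonical combinatorial invariant of the conjugacy class of $[f]$ and then to reduce both decision problems to a finite enumeration on this invariant. Starting from an invariant train track $\tau$ for $[f]$ produced by the Bestvina--Handel algorithm, together with its Perron--Frobenius transition matrix $M$, dilatation $\lambda = \lambda(f)$, singularity structure, and the data of the marked points (in particular the fixed point $x_0$), I would build a canonical form depending only on the conjugacy class of $[f]$ in the marked mapping class group. The structure theorem alluded to in the abstract is expected to play exactly this role, and in particular to solve the conjugacy problem by direct comparison of invariants, which will then be used as a subroutine for the other two.

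For the root problem, if $[f] = [g]^k$ then $\lambda(g)^k = \lambda(f)$, so $k$ is bounded by $\log\lambda(f)/\log\lambda_{\min}(S)$, where $\lambda_{\min}(S) > 1$ is any explicit lower bound on pseudo-Anosov dilatations for the underlying surface; this yields a finite list of candidate exponents. For each such $k$, candidate roots are obtained by extracting $k$-th roots of the combinatorial data of $[f]$ on a suitably subdivided train track: the transition matrix $N$ of $[g]$ must satisfy $N^k = M$ up to canonical equivalence, and the permutations induced on the marked points and on the prongs around $x_0$ must be compatible $k$-th roots of those induced by $[f]$. Each resulting candidate mapping class is constructed explicitly, and tested for equality with $[f]$ after being raised to the $k$-th power by applying the conjugacy test to its canonical invariant.

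For the symmetry problem, any finite-order $[\phi]$ commuting with $[f]$ and fixing $x_0$ preserves the invariant foliations of $[f]$ and their singularities, and therefore acts as a combinatorial automorphism of the canonical invariant of $[f]$, fixing the prong structure at $x_0$. The order of $[\phi]$ is bounded (for instance by Wiman's $4g+2$, or more sharply by the number of prongs at $x_0$ together with the size of the combinatorial model), and the group of combinatorial automorphisms of the invariant is finite. Enumerating these automorphisms produces a finite list of candidates for $[\phi]$; each is built as an explicit homeomorphism from the train track data and checked for finite order and for commutation with $[f]$, the latter again by comparing the canonical invariants of $[f]$ and $[\phi][f][\phi]^{-1}$.

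The main obstacle is the construction and rigidity of the canonical invariant: it must be fine enough that every root and every commuting finite-order symmetry of $[f]$ is forced to act combinatorially on it, yet coarse enough to be extracted algorithmically from an arbitrary Bestvina--Handel train track for $[f]$. Once such an invariant is in hand, the conjugacy, root and symmetry problems all reduce to a bounded combinatorial search; but producing it, and in particular verifying that no ``hidden'' root or symmetry can escape its combinatorial translation, is the heart of the proof.
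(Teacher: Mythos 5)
Your high-level strategy — construct a canonical computable invariant of $[f]$, solve the conjugacy problem by comparison of invariants, then reduce the root and symmetry problems to a bounded combinatorial search on that invariant — is the right shape, and it matches the paper's overall architecture. You also correctly flag in your last paragraph that the whole difficulty lies in building an invariant rigid enough that every root and symmetry is forced to act on it. However, the concrete mechanism you propose for the root problem contains a genuine gap, and you never supply the missing invariant.

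The gap in the root search: you propose to bound the exponent $k$ via $\lambda(g)^k=\lambda(f)$ (fine, and computable), and then to find candidate roots by ``extracting $k$-th roots of the combinatorial data on a suitably subdivided train track'', i.e.\ looking for a transition matrix $N$ with $N^k=M$. This is not a workable step as stated. A root $g$ of $f$ has its own $g$-invariant train track, which is of course also $f$-invariant, but the Bestvina--Handel train track you start from for $f$ is in general \emph{not} $g$-invariant, nor does any subdivision of it carry a map representing $g$. There is no reason for $M$ to admit a matrix $k$-th root that is realized by a train track map on the same graph, and conversely a matrix $k$-th root of $M$, even if one exists, need not correspond to any mapping class. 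So candidate roots cannot be enumerated in this way; the paper never attempts this. Similarly, for symmetries, ``combinatorial automorphisms of the invariant'' only gives the right answer if the invariant is chosen so that every commuting finite-order element is guaranteed to induce such an automorphism — precisely the rigidity you acknowledge as the missing ingredient.

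What the paper does instead is not invariant-matching followed by root extraction. It introduces a canonical \emph{finite} family of efficient representatives of $[f]$, the single cut (SC) representatives, parametrized bijectively by admissible cut-length vectors at the fixed puncture $x_0$. A folding operation (SC-folding) acts on this family, and the structure theorem (Theorem~\ref{th:5.1}) shows that, up to combinatorial equivalence, the SC-representatives are organized into finitely many \emph{cycles}, one per $f$-orbit of unstable separatrices at $x_0$, each cycle closing up at $(f^{-q})^*\mathcal{E}$. The crucial mechanism is Proposition~5.2: any combinatorial equivalence between two SC-representatives induces a unique homeomorphism $g$ of $S$ preserving the invariant foliations and commuting with $f$, with an explicit rescaling factor for the measures and a rotation number at $x_0$. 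Roots and symmetries are then \emph{detected} — not constructed from matrix data — as extra combinatorial equivalences among the cycles: a primitive subcycle (shorter than the full loop down to $(f^{-q})^*\mathcal{E}$) or an equivalence between cycles associated with different separatrices, with the rescaling factor $\nu$ distinguishing a root ($\nu\neq1$) from a symmetry ($\nu=1$), cf.\ Propositions~5.3--5.7. So the bound on candidates comes from the finiteness of the cycle structure, not from a dilatation lower bound, and the root $g$ itself is produced as a transition map realized by an explicit finite sequence of foldings. Your proposal names the destination but does not supply the cycle structure or the transition-map mechanism, which is where the actual proof lives.
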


The combinatorial tools that are developed for these questions give also, as a by product, a new solution to the conjugacy problem among pseudo-Anosov elements. \\
The restriction to the class that fixes a marked point is certainly not optimal but it makes the description of the solution and the arguments much
simpler.  The conjugacy problem for pseudo-Anosov elements in the mapping class group is known from Hemion \cite{he79}, another solution was given later by Mosher \cite{mo86}. A general solution for the mapping class group, ie including reducible elements, has been given in Keil's thesis \cite{ke97}. The solution proposed here is easily computable.
Theorem \ref{th:1.1} is based on a structure theorem (theorem \ref{th:5.1}) that describes a canonical subset of the set of {\em efficient representatives} of $[f]$
as developed by Bestvina and Handel \cite{BH95} for surface automorphisms and based on the free group train track representatives the same authors introduced in \cite{BH92}, with some previous weak versions by Franks and Misiurewicz \cite{FM93} and Los \cite{lo93}. 

The structure theorem \ref{th:5.1} describes a canonical  set associated to $[f]$ called {\em single cut (SC) representatives} of $[f]$ as a union of "cycles". This set is a complete conjugacy invariant. Most of the paper is about defining, characterising and computing these cycles.
The structure of these cycles is closely related with the local property of the pseudo-Anosov homeomorphism $f$ around the fixed marked point $x_0$.

Questions about roots of pseudo-Anosov homeomorphisms have attracted some attention lately, see for instance Gonzales-Meneses \cite{GM03} in the case of the punctured discs and Bonatti and Paris \cite{BP07} for more general surfaces.

This paper is a rewriting, with complements, of a part of the first author's thesis \cite{fe98}, defended in 1998 and advised by the second author.
The second author would like to thank Luis Paris and Bert Wiest who motivated him to start the rewriting. 

\section{Efficient representatives on surfaces}

We consider a pseudo-Anosov homeomorphism $f$ on a compact surface $S$ with $n+1$ marked points $X = \{ x_0, x_1, ..., x_n \}, n\geqslant 0 $ among which $x_0$ is fixed. In this section we review definitions and properties of {\em efficient representatives} for $[f]$ the isotopy class of $f$ on $S_X = S - X$. For a more detailed presentation and examples, the reader is referred to Bestvina and Handel \cite{BH95}, and for an implementation of the algorithm to Brinkmann  \cite{Br}.

\subsection{Combinatorial aspects.}

\subsubsection{Topological representatives.} 

We start with some preliminary and general definitions. A {\em topological representative} of $[f]$ is a triple $(\Gamma, \Psi, h_{\Gamma})$ where:\\
- $\Gamma$ is a graph without valence one vertices,\\
- $h_{\Gamma}: \Gamma \rightarrow S_X$ is an embedding that induces an homotopy equivalence,\\
- $\Psi: \Gamma \rightarrow \Gamma$ is a combinatorial map that represents $f$.

Let us make this definition more explicit. In all the paper graphs will be considered from several point of views. Combinatorially a finite graph is given by a collection of vertices and oriented edges, that are denoted respectively $V(\Gamma)$ and $E(\Gamma)$. The reverse of the edge $e$ is denoted $\overline{e}$. Graphs will also be considered as topological or metric spaces depending on the context.\\
A pair $(\Gamma, h_{\Gamma})$ is called an {\em embedded graph}, it contains the following data:\\
(a) The embedding implies the existence of a cyclic ordering for the edges that are incident at a vertex $v\in V(\Gamma)$, ie the star $St(v)$.\\
(b) The  fundamental groups  $\pi_1(\Gamma)$ and $\pi_1(S_X)$ are isomorphic.\\
(c) A regular neighbourhood $N^{reg}[ h_{\Gamma} (\Gamma)]$ is homeomorphic to the surface:
 $\hat{S}_X := clos( S - \bigcup D_{x_i} )$, where the $ D_{x_i}$ are small disjoint discs centred at the marked points.\\
The map $\Psi: \Gamma \rightarrow \Gamma$ is an endomorphism that satisfies:\\
- $\Psi (V(\Gamma)) \subset V(\Gamma) $,\\
- for every edge $e\in E(\Gamma),\ \Psi(e)$ is an edge path in $\Gamma$, it is represented by a word in the alphabet $\{ E(\Gamma) ^{\pm 1 } \}$.\\
The action of $\Psi$ on $\Gamma$ is homotopic, through $h_{\Gamma}$, to the action of $f$ on $S_X$, ie:\\
     $ (\ast)  \hspace{5cm} f \circ h_{\Gamma} \simeq h_{\Gamma} \circ \Psi  .$

To any topological representative  $ (\Gamma, \Psi, h_{\Gamma})$ is associated an {\em incidence matrix} $M$ whose entries are labelled by the edge set $E(\Gamma)$. The entry in place $(a,b)$ is the number of occurrences of the letters $a$ or $\overline{a}$ in the word $\Psi(b)$. This matrix depends only on the pair $ (\Gamma, \Psi)$.

{\em Remark:} 
A given isotopy class $[f]$ admits many topological representatives. For instance take a base point $y$ on $S_X$ and choose a set of generators for $\pi_1(S_X, y)$. This set is topologically represented by a bouquet of circles embedded in $S_X$, it defines a pair 
$(\Gamma, h_{\Gamma})$. Now consider any induced map 
$f_{\#}: \pi_1(S_X, y) \rightarrow  \pi_1(S_X, y)$ on this set of generators, this is a map $\Psi$ such that $(\Gamma, \Psi, h_{\Gamma})$ is a topological representative of $f$.

\subsubsection{Efficient representatives.}

The definitions are given in the general case ($n\geqslant 0$) and will be specified in the particular case $n = 0$. These two situations are quite different at the group automorphism point of view. Indeed the induced map $f_{\#}$ is an automorphism of the free group $\pi_1(S_X) = \pi_1(\Gamma)$ that is reducible when $n > 0$  and irreducible if $n=0$, following the terminology of Bestvina and Handel \cite{BH92}.\\
 Two edges $(a,b)$ of $\Gamma$ that are incident at a vertex $v$ and adjacent, according to the cyclic ordering (property (a) above), is a {\em turn}. A turn is {\em illegal} if there is an integer $k\geqslant 0$ such that 
$\Psi^k (a)$ and $\Psi^k (b)$ have a non trivial initial common edge path, where initial is understood with the orientation so that $v$ is the initial vertex.
The smallest such $k$ is called the {\em order} of the illegal turn. Turns are also cyclically ordered and a maximal set of consecutive illegal turns is a {\em gate}.
\begin{defn}
A triple $ (\Gamma, \Psi, h_{\Gamma})$ is an {\em efficient representative} of $[f]$ if the following conditions (G1)--(G5) are satisfied.
\end{defn}

(G1) $ (\Gamma, \Psi, h_{\Gamma})$ is a topological representative of $[f]$.\\
(G2) For every edge $e$ and every integer $k>0$ the edge path $\Psi^k (e)$ does not backtrack.\\
This means that all the words $\Psi^k (e)$ are reduced, ie no occurrences of subwords $a\overline{a}$.

Since $h_{\Gamma}(\Gamma)$ is homotopy equivalent to $S_X$, each distinguished point $x_i$ is contained in a disc bounded by a loop in $h_{\Gamma}(\Gamma)$.
Each such loop is represented by a closed edge path $b_i$ in $\Gamma$. 
The edges that belongs to the $b_i, i\geqslant 1$ are called {\em peripheral}. The union of the peripheral edges is a subgraph $P$ of $\Gamma$.
The {\em pre-peripheral} edges are the edges of $\Gamma$ that are eventually mapped to a path included in $P$ by $\Psi$. We denote $Pre P$ the union of these pre-peripheral edges and by $H$ the complementary graph: $ H = \Gamma - \left(P \cup Pre P\right)$.

(G3) The loops $b_i ,  i\geqslant 1$ are permuted under $\Psi$ and 
$\Psi|_P$ is a simplicial homeomorphism (the image of each edge in $P$ is a single edge in $P$).

If condition (G3) is satisfied then an illegal turn cannot consists of two peripheral or pre-peripheral edges. In addition the incidence matrix has the following form:
$$\left(
\begin{array}{ccc}
N & A & B \\
0 & C & D \\
0 & 0 & M_H
\end{array}
\right),$$
where the block $N$ corresponds to the peripheral edges and is a permutation matrix. The block $C$ corresponds to the pre-peripheral edges, it is a nilpotent matrix. The block $M_H$ corresponds to the edges of $H$.

(G4) The matrix $M_H$ is irreducible.\\
(G5) If the turn $(a,b)$ is illegal of order one then the first letter of $\Psi(a)$  (and $\Psi(b)$) is an edge of $H$.

{\em Remark:} If $n =0$ ie $S_X$ has only one marked point then the set of peripheral and pre-peripheral edges is empty, thus conditions (G3) and (G5) are vacuous and the incidence matrix is  $M_H$.

\subsubsection{Equivalences.}

Two efficient representatives ${\cal E} = (\Gamma, \Psi, h_{\Gamma})$ and 
 ${\cal E '} = (\Gamma ', \Psi ', h '_{\Gamma '})$ are {\em equivalent} if there exists a simplicial homeomorphism $ C: \Gamma \rightarrow \Gamma '$ and a homeomorphism $g: S_X \rightarrow S_X$, isotopic to the identity so that: \\ (Eq1) $ \Psi ' \circ C = C \circ \Psi$ and, \\
 (Eq2)  $ h' _{\Gamma '} \circ C = g \circ h_{\Gamma}$.\\
The homeomorphism $C$ induces a relabelling of the edges and (Eq1) states that after this relabelling the two maps are identical. The second condition 
requires that the relabelling respects the cyclic ordering, and that after the relabelling the two embeddings are isotopic on the surface.

In the previous notion of equivalence if  $g$ is not required to be isotopic to the identity,  we obtain a notion of equivalence for the pairs   $ (\Gamma, \Psi) $ and 
 $ (\Gamma ', \Psi ')$ that is called {\em combinatorial equivalence}.

The mapping class group $Mod(S_X)$ acts on the set of efficient representatives as follows:\\
if ${\cal E} = (\Gamma, \Psi, h_{\Gamma})$ is an efficient representative of $[f]$ and $[g ]\in Mod(S_X)$ then we denote  
$g^{*}{\cal E} = (\Gamma, \Psi, g\circ h_{\Gamma})$, it is an efficient representative of $[g\circ f\circ g^{-1}]$.

\subsection{Geometrical aspects.}
The goal of this section is to construct a flat surface together with a pair of measured foliations from the combinatorial data contained in an efficient representative 
${\cal E} = (\Gamma, \Psi, h_{\Gamma})$ of $[f]$. This construction provides all the geometric invariants of the pseudo-Anosov class $[f]$: dilatation factor, stable and unstable invariant measured foliations.

\subsubsection{Basic gluing operations.}
The very first metric properties comes from the incidence matrix 
$M(\Gamma, \Psi)$. From  property (G4) and Perron-Frobenius theorem, $M$ has a unique real maximal eigenvalue $\lambda > 1$, together with an eigenvector $L$ and another eigenvector $W$ for the transpose matrix $^tM$. These vectors are unique up to scale, $W$ has positive entries, the entries of $L$ are nonnegative and the entries of $L$ labelled by $H$ are positive.\\
Fix $L$ and $W$ in their projective classes. The edge $e$ of $\Gamma$ has an orientation, say from a vertex $v = i(e)$ to $v' = t(e)$. We associate to $e$ a pair of non negative numbers 
$(L(e), W(e))$ and a bifoliated rectangle $R(e)$ of length $L(e)$ and width $W(e)$. 
The orientation of $e$ induces an orientation of $R(e)$, with two opposite sides
$\partial_v R(e)$ and  $\partial_{v'} R(e)$. 
Note that some rectangles are degenerate, ie have positive width and zero length. The corresponding edges are called {\em infinitesimal}.

The graph $h_{\Gamma} (\Gamma)$ is embedded in $S_X$ and, around each vertex $h_{\Gamma} (v)$, there is a cyclic ordering of the edges 
$h_{\Gamma} (e_i)$ incident at $h_{\Gamma} (v)$, this set of edges of $\Gamma$ is the star $St(v)$. A small disc neighbourhood $D(v)$ of $h_{\Gamma} (v)$ is considered as a planar chart. The rectangles $R(e_i)$ are disjointly embedded into $S_X$ under ${\hat h}_{\Gamma}$ so that 
${\hat h}_{\Gamma}(R(e_i)) \bigcap D(v)$ are cyclically ordered as the  
$h_{\Gamma} (e_i)$ for $e_i \in St(v)$.\\
 The next goal is to find an identification of the sides 
$\partial_v {\hat h}_{\Gamma}R(e_i),\/ e_i \in St(v)$ in the chart $D(v)$. But some metric informations are missing: which proportion of the side $\partial_v {\hat h}_{\Gamma}R(e_i)$ has to be identified with the adjacent sides?  This is the goal of the next paragraph.

\subsubsection{Train track construction: blow-up and glue.}

The idea is to construct a Williams-Thurston train tracks representative out of an efficient representative (see for instance Harer and Penner \cite{HaPe92} for details about train tracks). What is missing is the so called "switch condition" at each vertex and this forces us to change the graph structure as well as the map in order to impose a "2-sides" condition. This is achieved by {\em blowing up} each vertex of the graph $\Gamma$, keeping track of the gate structure given by the map $\Psi$. The details can be found in Bestvina and Handel \cite{BH95}, we review here the constructions and the results.\\
(1) Replace each vertex $h_{\Gamma} (v)$ by a small disc $\Delta (v)$ in each chart $D(v)$.\\
(2) Along the boundary $\partial \Delta (v)$ identify all the points 
$h_{\Gamma} (e_i) \bigcap \partial \Delta (v)$ when the corresponding edges
$e_i$ belong to the same gate (given by $\Psi$). If $g(v)$ is the number of distinct gates at $v$ we obtain $g(v)$ points $\{ y_1, ..., y_g\}$ along $\partial \Delta (v)$ that are cyclically ordered as the gates of $ (\Gamma, \Psi, h_{\Gamma})$ at $v$.\\
(3) Two points $y_i \neq y_j$ are connected by an edge $\epsilon_{i,j}$ within $\Delta (v)$ if there is an edge $e \in \Gamma$ and an iterate $\Psi^k (e)$ that crosses a turn 
$(e' _i, e'_j)$ where $e'_i$ belongs to the gate $g_i$ and $e'_j$ to the gate $g_j$.

These (local) operations define a new graph $\tau(\Gamma, \Psi)$, an embedding $h_{\tau}:\tau \rightarrow S_X$ and a new map $\widetilde{\Psi}: \tau \rightarrow \tau$ that are well defined. Let us express what are the conclusions when ${\cal E} = (\Gamma, \Psi, h_{\Gamma})$ is an efficient representative of a pseudo-Anosov homeomorphism.

(i) The graph $\tau(\Gamma, \Psi)$ is connected.\\
(ii) The subgraphs $h_{\tau}(\tau (\Gamma, \Psi) ) \bigcap\Delta (v)$ are either $g(v)$-gones, or $g(v)$-gone minus one side.\\
(iii)  The map $\widetilde{\Psi}: \tau \rightarrow \tau$ is well defined, it permutes the $\epsilon$ edges and each vertex of $\tau$ has exactly two sides: the $\epsilon$-edges inside $\Delta (v)$ and one gate outside $\Delta (v)$.\\
(iv) The incidence matrix for $(\tau, \widetilde{\Psi})$ is defined exactly as for any topological representative and has the following form:
$$\left(
\begin{array}{ccc}
E & F \\
0 & M(\Gamma, \Psi)
\end{array}
\right),$$
where the block $E$ corresponds to the $\epsilon$-edges and is thus a permutation matrix by (iii). Exactly as for the matrix $M(\Gamma, \Psi)$, this new matrix  $M(\tau, \tilde{\Psi})$ has the same largest eigenvalue $\lambda > 1$ and two corresponding eigenvectors  $(\widetilde{L}, \widetilde{W} )$.\\
(v) The graph $\tau(\Gamma, \Psi)$ together with the weight function $\widetilde{W}$ is a {\em measured train track}, ie it satisfies at each vertex a 
{\em switch condition}: each vertex has two sides  by (iii) and the sum of the weights on one side equals the sum of the weights on the other side.

By (iv) the newly introduced edges $\epsilon$ are infinitesimal: $\widetilde{L}(\epsilon_{i,j}) = 0$, which imply that the length structure has not changed between $\Gamma$ and $\tau$. The width structure $\widetilde{W}(\epsilon_{i,j}) > 0$ is exactly the information that was missing in the preliminary naive construction: it gives the exact position of the singularities inside the discs $D(v)$, with respect to the transverse measure given by $\widetilde{W}$.


\begin{figure}[htb]
\centerline {\includegraphics[scale=0.5]{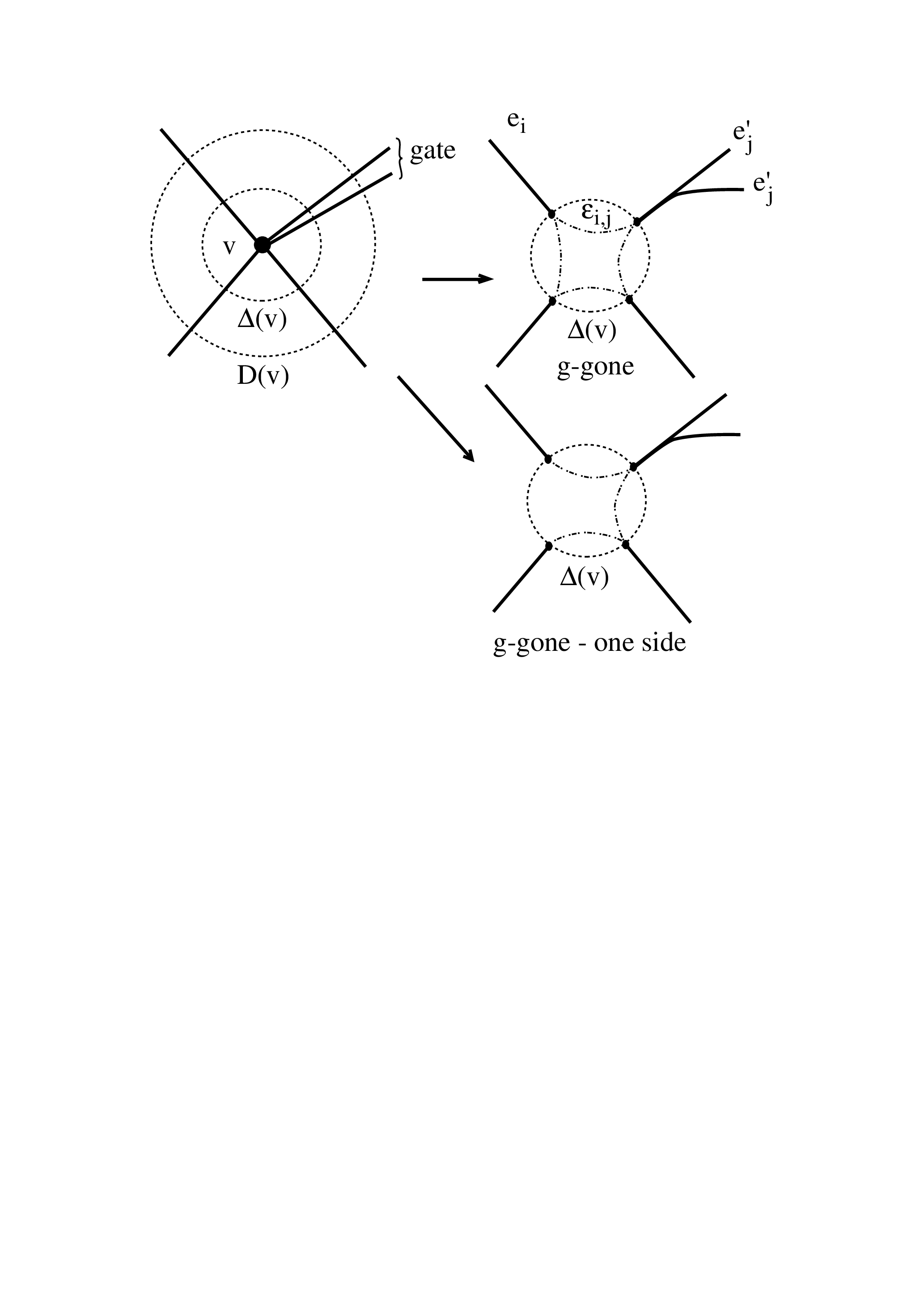}}
\label{fig:1}
\caption{ Train track construction at a vertex.}
\end{figure}

The construction is now essentially the classical "highway construction" of Thurston, see Harer and Penner \cite{HaPe92}, obtained by gluing rectangles with specific width satisfying switch conditions. The difference is that our rectangles have both width and length. For the rectangles with non zero length the identification is just the obvious one. For the rectangles of length zero, coming from the infinitesimal edges, the identification is done in two steps. First we fix for these (infinitesimal) rectangles an arbitrary small length $\delta$ and we construct a surface $\mathcal{R}^{[{\cal E}]}_{\delta}$ together with a pair of measured foliations. Then we let $\delta$ goes to zero and we obtain a surface
$\mathcal{R}^{[{\cal E}]}$ with a pair of measured foliations (see figures 1, 2, 3).


\begin{figure}[htb]
\centerline {\includegraphics[height=80mm]{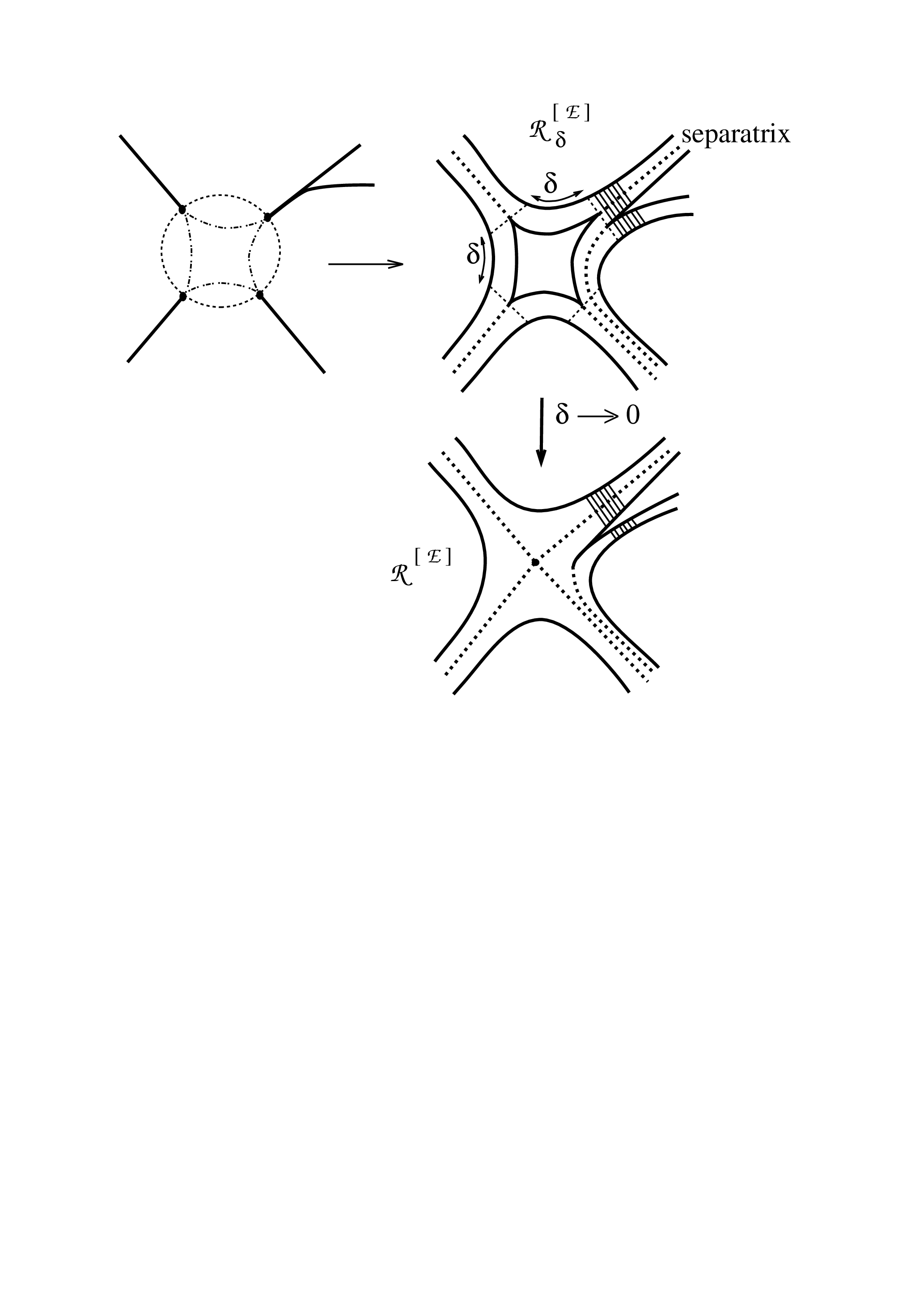}}
\label{fig:2}
\caption{ Foliated surface from train track}
\end{figure}

The surface $\mathcal{R}^{[{\cal E}]}$ has $n$ marked points, one boundary component and is homeomorphic to $Clos( S - D_{x_0} )$. The map 
$\Psi: \Gamma \rightarrow \Gamma$  (or  $\widetilde{\Psi}: \tau \rightarrow \tau$ ) induces a map 
$f_{\cal E}: \mathcal{R}^{[{\cal E}]}  \rightarrow \mathcal{R}^{[{\cal E}]}$
that is a homeomorphism in the neighbourhood of any interior point.

\begin{figure}[htb]
\centerline {\includegraphics[scale=0.5]{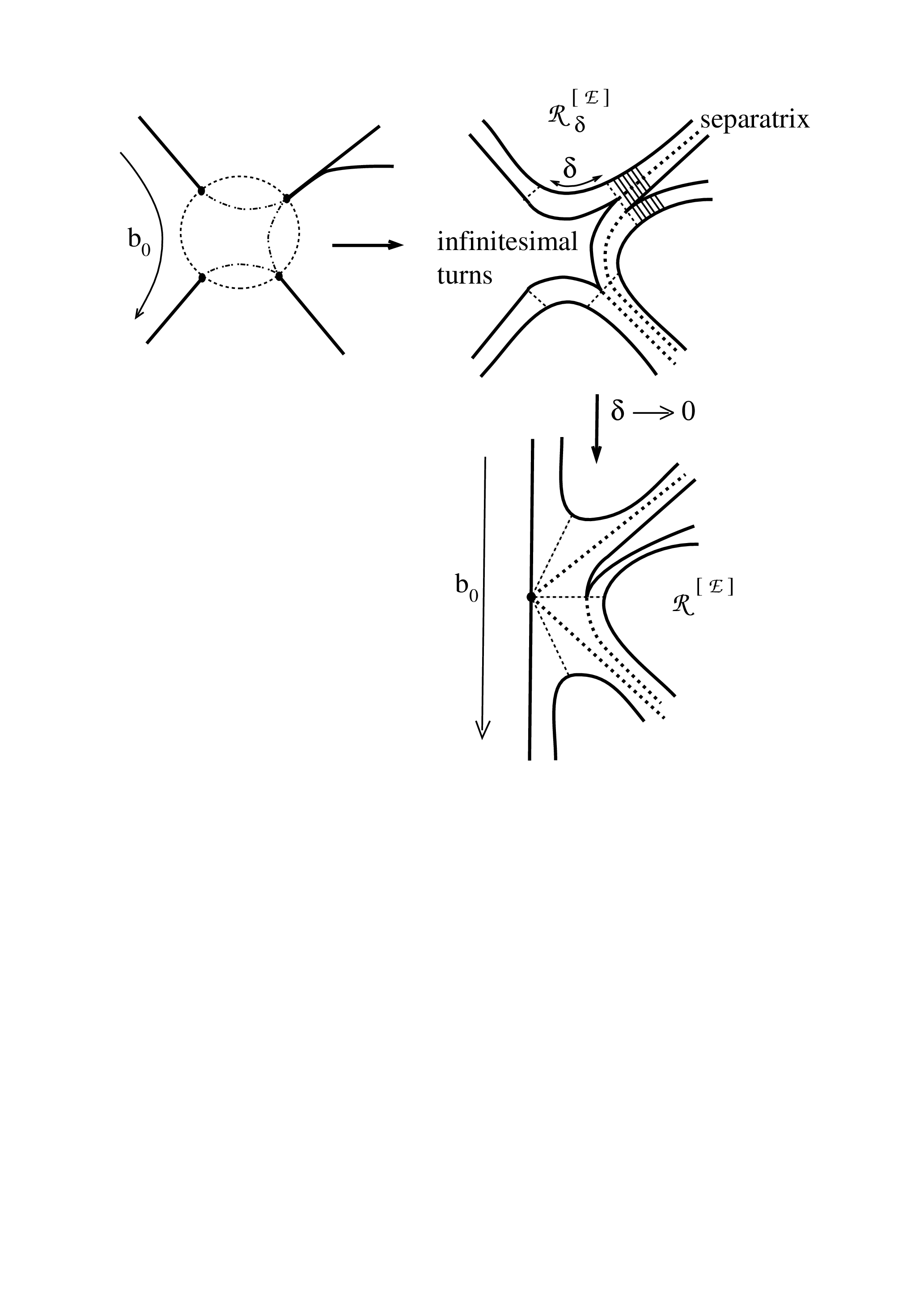}}
\label{fig:3}
\caption{Infinitesimal turns.}
\end{figure}

\subsubsection{ Periodic points on the boundary, Nielsen paths.}

 By the gluing construction, each illegal turn $ t = (a,b)$ of 
$(\Gamma, \Psi)$  (or of $(\tau, \tilde{\Psi})$ ) gives rise to a cusp $C_t$ of 
$\mathcal{R}^{[{\cal E}]}$ because of the condition (G5) and part (2) of the train track construction.
The map 
$f_{\cal E}: \mathcal{R}^{[{\cal E}]}  \rightarrow \mathcal{R}^{[{\cal E}]}$
 maps the boundary curve to itself, up to homotopy, is uniformly expanding along this curve and thus has periodic points on the boundary $\partial \mathcal{R}^{[{\cal E}]}$ with the following features:\\
- The periodic points alternate along $\partial \mathcal{R}^{[{\cal E}]}$, with the cusps $C_t$.\\
- The path between two consecutive periodic points, passing through a cusp is a periodic {\em Nielsen path}, see Jiang \cite{j83}.\\
The surface $ \mathcal{R}^{[{\cal E}]}$ has a flat metric given by the length and width structure of the constitutive rectangles. Similarly the graph $\Gamma$ together with the vector $L$ is a metric graph. Furthermore the rectangle partition of $ \mathcal{R}^{[{\cal E}]}$ is, by construction, a
{\em Markov partition} for the map $f_{\cal E}$ (see for instance Fathi {\em et al.} \cite{flp79}) from which a symbolic dynamics is well defined (see for instance Alseda {\em et al.} \cite{ALM93}, Shub \cite{sh78}).
This symbolic dynamics, together with the metric structure  on 
$ \mathcal{R}^{[{\cal E}]}$ (or on $\Gamma$) enables to give the exact metric location of the periodic points on $\partial \mathcal{R}^{[{\cal E}]}$ and thus of the Nielsen paths between them.

Let $t =(a,b)$ be an illegal turn of $(\Gamma, \Psi)$ and $C_t$ the corresponding cusp on $\partial \mathcal{R}^{[{\cal E}]}$. From the previous discussion there are 2 periodic points $(a_t , b_t)$ on 
$\partial \mathcal{R}^{[{\cal E}]}$ adjacent to $C_t$. Let $A_t = [C_t , a_t]$ and 
$B_t = [C_t , b_t]$ be the paths from $C_t$ to $a_t$ (resp. $b_t$) along 
$\partial \mathcal{R}^{[{\cal E}]}$. These paths have the same length:  $L(A_t) = L(B_t)$. The path $ N_t = \overline{A_t} B_t$ is the Nielsen path between $a_t$ and $b_t$ passing through $C_t$, and 
$\bigcup_t N_t=\partial \mathcal{R}^{[{\cal E}]}$  (see figure 4).

\begin{figure}[htb]
\centerline {\includegraphics[scale=0.4,angle=90]{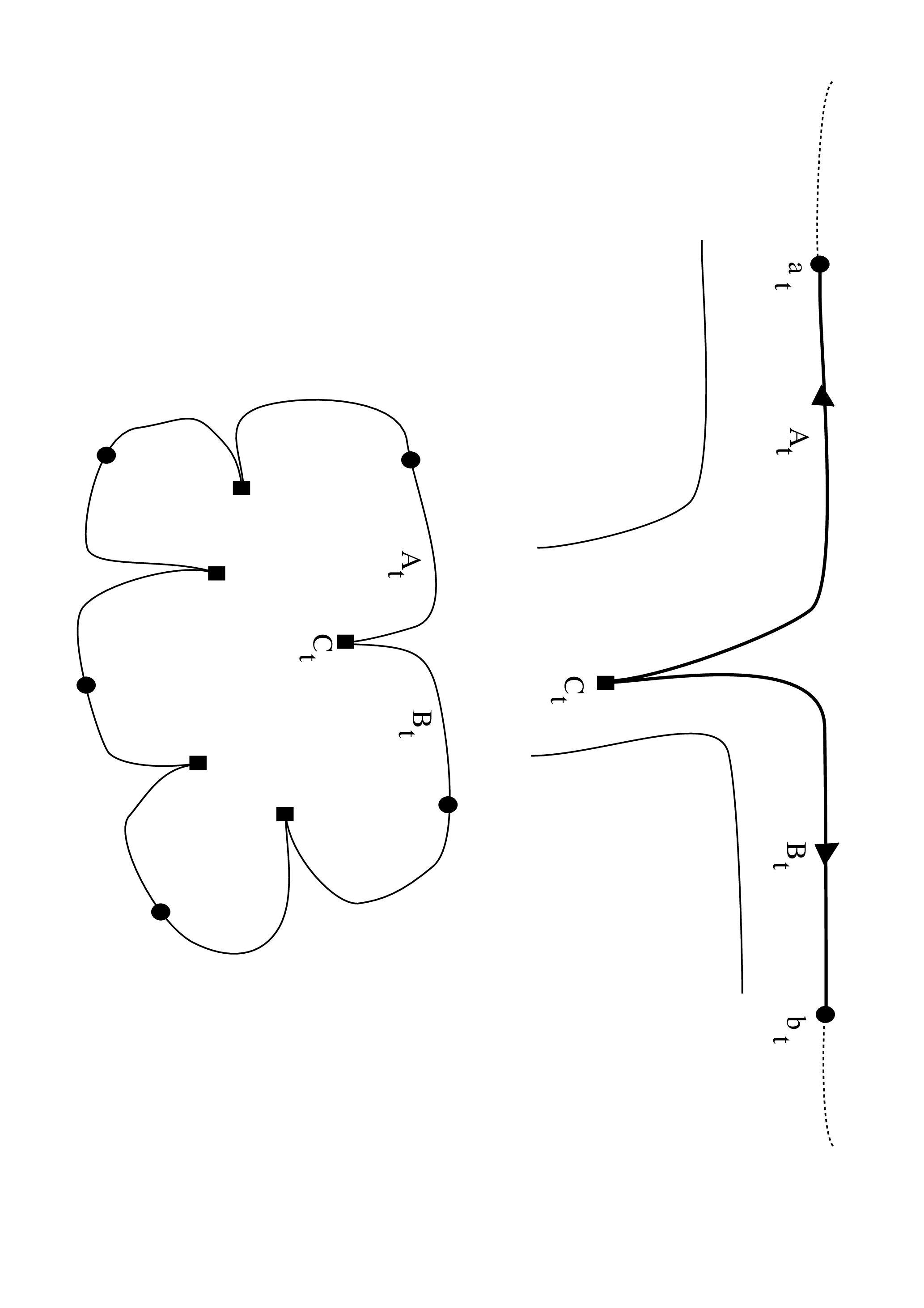}}
\label{fig:4}
\caption{ The Nielsen paths and the boundary .}
\end{figure}

If we identify isometrically $A_t$ with $B_t$ for each illegal turn $t$, then all the periodic points along $\partial \mathcal{R}^{[{\cal E}]}$ are identified to a single point that we identify with $x_0$, the quotient surface is identified with $S$, ie there is a map 
$ \mathcal{IN}:  \mathcal{R}^{[{\cal E}]} \rightarrow S $. 
The pair of measured foliations on $ \mathcal{R}^{[{\cal E}]}$ induces a pair 
$\{ (\mathcal{F}^s,\mu^s); ( \mathcal{F}^u, \mu^u) \}$ on $S$ and the identified arcs give rise to segments of length $L(A_t) = L(B_t)$ along unstable leaves starting from the marked point $x_0$. There is thus a mapping from the set of illegal turns of the efficient representative ${\cal E}$ to a subset of the unstable leaves issued from the marked point $x_0$ (the separatrices). 
The map $f_{\cal E}$ passes
 to the quotient $ \mathcal{IN}$ and the quotient map $f$ is a homeomorphism. 
The resulting homeomorphism is thus the pseudo-Anosov homeomorphism $f$: it fixes $x_0$, leaves invariant the pair of measured foliations $\{ (\mathcal{F}^s,\mu^s); ( \mathcal{F}^u, \mu^u) \}$ with dilatation factor $\lambda > 1$.

The previous construction has been given from a specific efficient representative 
$(\Gamma, \Psi, h_{\Gamma} )$ that is a combinatorial object. It allows to reconstruct $(S, f)$ and its pair of invariant measured foliations 
$\{ (\mathcal{F}^s,\mu^s); ( \mathcal{F}^u, \mu^u) \}$, recall that the dilatation factor $\lambda$ has been computed during the process as the largest eigenvalue of the matrix $M(\Gamma, \Psi)$. It also gives an injective map from the set of illegal turns to a subset of the unstable separatrices at the fixed marked point $x_0$.
The possible missing separatrices are obtained as follows. In the construction of the train track $\tau (\Gamma, \Psi)$, the set of infinitesimal edges 
$\epsilon$ in a disc $D(v)$ can only have two shapes: a $g$-gone or a $g$-gone minus one side, by (ii) of paragraph 2.2.2. In the first case the corresponding singularity is interior to 
$ \mathcal{R}^{[{\cal E}]}$. 
In the second case, a turn between two infinitesimal edges is called an {\em infinitesimal turn}, it is crossed by a boundary loop $b_i$, $i = 0, 1, ..., n$.
If the infinitesimal turn is crossed by a loop $b_i$ then it defines a separatrix at the marked point $x_i$. In particular those infinitesimal turns that are crossed by $b_0$ define separatrices at $x_0$ (see figure 3).
We just described a one-to-one correspondence between the unstable separatrices at $x_0$ and some turns of $(\tau, \widetilde{\Psi})$ that are well defined from the efficient representative ${\cal{E}} = (\Gamma, \Psi, h_{\Gamma} )$.

\begin{lem}
Let ${\cal{E}} = (\Gamma, \Psi, h_{\Gamma} )$ be an efficient representative of a pseudo-Anosov element $[f]$ and $(\tau, \widetilde{\Psi}) ({\cal{E}})$ the associated train track map. 
There is a well defined bijective map
$u_{\cal{E}}: T^0_{\cal{E}} \mapsto \{u_1, ..., u_m\}$, where 
$ \{u_1, ..., u_m\}$ is the set of unstable separatrices at the fixed marked point $x_0$ and $T^0_{\cal{E}}$ is the union of the illegal turns and the infinitesimal turns that are crossed by $b_0$ in $(\tau, \widetilde{\Psi}) ({\cal{E}})$.
There is another well defined map 
$\mathcal{L}_{\cal{E}}: T^0_{\cal{E}} \mapsto \mathbb{R}^m$ given by 
$\mathcal{L}_{\cal{E}} (t) = \frac{1}{2} length (N_t)$ if $t \in  T^0_{\cal{E}}$ is an illegal turn and $ \mathcal{L}_{\cal{E}} (t) = 0 $ otherwise, where $N_t$ is the Nielsen path between two consecutive boundary periodic points associated with the illegal turn $t$.
$\square$
\end{lem}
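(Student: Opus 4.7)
The plan is to organise the proof as a verification that the constructions carried out in the previous paragraphs indeed produce the two maps with the claimed properties. Essentially nothing new has to be built; the content is to check well-definedness and bijectivity.

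First I would establish well-definedness of $u_{\cal E}$. For each illegal turn $t = (a,b)$, the construction provides a cusp $C_t$ on $\partial \mathcal{R}^{[{\cal E}]}$ with an associated pair of arcs $A_t, B_t$ of equal length along the boundary. The identification $\mathcal{IN}$ glues $A_t$ isometrically to $B_t$, and their common image is a segment in $S$ starting from $x_0$. Because $f_{\cal E}$ is uniformly expanding along $\partial \mathcal{R}^{[{\cal E}]}$ and preserves the invariant foliations, this boundary maps to an unstable leaf; hence the image arc determines a unique unstable separatrix at $x_0$, which I define to be $u_{\cal E}(t)$. For an infinitesimal turn $t$ crossed by $b_0$, property (ii) of 2.2.2 implies that the corresponding interior singularity of $\mathcal{R}^{[{\cal E}]}$ sits on $b_0$ and is therefore identified to $x_0$ by $\mathcal{IN}$; the half-leaf prescribed by the two infinitesimal edges of $t$ then defines the separatrix $u_{\cal E}(t)$.

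Next I would prove bijectivity. For injectivity, distinct cusps $C_t \neq C_{t'}$ are separated along $\partial \mathcal{R}^{[{\cal E}]}$ by periodic points that are not identified to each other by $\mathcal{IN}$, so their glued arcs emerge from $x_0$ through distinct angular sectors; similarly, distinct infinitesimal turns correspond to distinct gates of $(\tau,\widetilde\Psi)$ around the vertex lying on $b_0$, hence distinct prongs at $x_0$. A cusp-separatrix and an infinitesimal-turn-separatrix cannot coincide because the former reaches $x_0$ through a boundary arc of $\mathcal{R}^{[{\cal E}]}$ and the latter through an interior point of $\mathcal{R}^{[{\cal E}]}$ lying on $b_0$. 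For surjectivity, each of the $m$ prongs at $x_0$ is obtained by $\mathcal{IN}$ from a half-leaf of $\mathcal{R}^{[{\cal E}]}$ ending either on $\partial \mathcal{R}^{[{\cal E}]}$ (hence at some cusp $C_t$, giving an illegal turn) or at an interior singularity crossed by $b_0$ (hence at some infinitesimal turn crossed by $b_0$).

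For the map $\mathcal{L}_{\cal E}$, the metric on $\mathcal{R}^{[{\cal E}]}$ is determined by the Perron--Frobenius vectors $(\widetilde L, \widetilde W)$, and $N_t = \overline{A_t}B_t$ is a well-defined path between two boundary periodic points computable from the symbolic/Markov data of $(\tau, \widetilde\Psi)$. Thus $\mathrm{length}(N_t)$ is a well-defined real number, equal to $2L(A_t)$. Setting $\mathcal{L}_{\cal E}(t)=\tfrac12\,\mathrm{length}(N_t)$ for illegal turns and $0$ for infinitesimal turns (which carry no boundary arc) produces the announced vector in $\mathbb{R}^m$ via the indexing provided by $u_{\cal E}$.

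The main obstacle, in my view, is the surjectivity half of step two: one must exclude the possibility of an unstable separatrix at $x_0$ that is not captured by any turn of $T^0_{\cal E}$. This boils down to a local count at the preimage of $x_0$, combining the boundary prongs (in bijection with cusps, hence with illegal turns) and the interior prongs at the singularity on $b_0$ (in bijection with the infinitesimal turns of the corresponding gate crossed by $b_0$). This count is forced by the structure of the blown-up graph at the vertex associated with $x_0$ together with the fact that $\mathcal{IN}$ collapses exactly the periodic points on $\partial\mathcal{R}^{[{\cal E}]}$ to $x_0$.
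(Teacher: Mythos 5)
Your proposal follows the same route as the paper, which is just to unpack the construction in \S 2.2.3; the paper marks the lemma $\square$ precisely because the correspondence is built there explicitly. Two slips are worth flagging. First, an infinitesimal turn crossed by $b_0$ does \emph{not} correspond to an ``interior singularity of $\mathcal{R}^{[\mathcal{E}]}$'': in the paper's dichotomy, the full $g$-gone is the interior case, while the $g$-gone-minus-one-side (the only case where an infinitesimal turn is crossed by some $b_i$) places that prong structure on the boundary side of $\tau$; when $i=0$ it sits on $\partial\mathcal{R}^{[\mathcal{E}]}$ and $\mathcal{IN}$ then sends it to $x_0$. Second, in the injectivity step the phrase ``periodic points that are not identified to each other by $\mathcal{IN}$'' is literally false, since $\mathcal{IN}$ collapses \emph{all} boundary periodic points to $x_0$; what is true, and what makes the argument work, is that the collapsed arcs $\mathcal{IN}(A_t)=\mathcal{IN}(B_t)$ for distinct illegal turns $t$ emanate from $x_0$ in pairwise distinct angular sectors (the Nielsen paths $N_t$ tile $\partial\mathcal{R}^{[\mathcal{E}]}$ with disjoint interiors), and similarly the prongs picked out by distinct infinitesimal turns crossed by $b_0$ sit in distinct sectors between those arcs. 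With those corrections the proposal coincides in substance with the paper's construction-as-proof.
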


\section{Cut lengths, single cut representatives.}

The goal of this section is to prove a converse of the previous lemma, ie starting from the geometric invariants of the pseudo-Anosov homeomorphism $f$ together with a length vector as above and satisfying some additional properties, we  construct an efficient representative of $[f]$.
The construction is similar to the one in Fathi {\em et al.} \cite{flp79} used to prove the existence of a Markov partition for pseudo-Anosov homeomorphisms. In addition it proves the existence of efficient representatives without appealing to the train track algorithm.

\subsection{Admissible cut lengths.}

First we fix once and for all the invariant measured foliations in their respective projective classes, in other words the measures $\mu_s$ and $\mu_u$ are fixed.
Recall that the pseudo-Anosov $f$ fixes the marked point $x_0$ and induces a permutation $\sigma: \{1, ..., m\} \to  \{1, ..., m\}$ of the unstable separatrix indexes at $x_0$. Let $l_s = (l_1, ..., l_m)$ be a non zero vector 
in $\mathbb{R}_{+}^{m}$, called a {\em cut length} vector.
The unstable separatrices $\{u_1, ..., u_m\}$ are cyclically indexed and we consider the initial segment $[x_0, u_i (l_i)]_{u_i}$ along $u_i$ of $\mu_s$-measure $l_i$
for $ 1 \leqslant i \leqslant m $, called a {\em  cut line} along $u_i$.
We denote by $\mathcal{L} (l_1, ..., l_m)$ the union of the cut lines.
There is a one-to-one correspondence between the cut length vector $(l_1, ..., l_m) \in \mathbb{R}_{+}^{m}$ and the cut lines $\mathcal{L} (l_1, ..., l_m) \subset S$.


\begin{figure}[htbp]
\centerline {\includegraphics[scale=0.5]{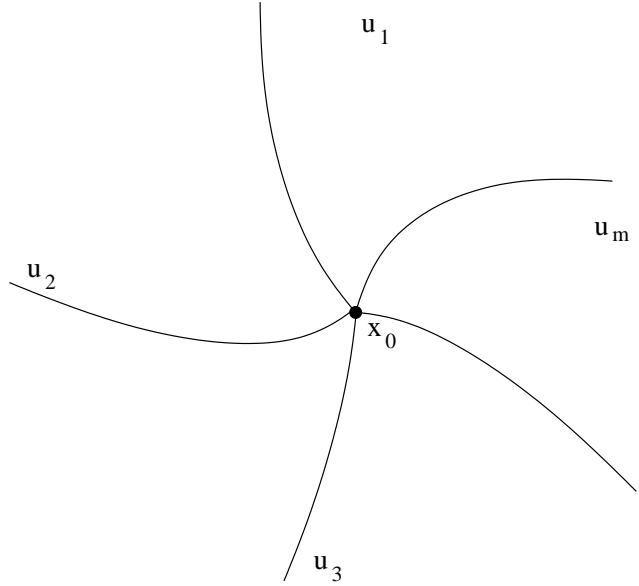}}
\label{fig:5}
\caption{ The unstable separatrices at the point $x_0$.}
\end{figure}

The surface $S$ together with 
$(\mathcal{F}^s, \mu^s) ; (\mathcal{F}^u, \mu^u)$ and the cut lines $\mathcal{L} (l_1, ..., l_m)$ defines two types of special points:\\
- The {\em true singularities} are the singularities of the foliations $\mathcal{F}^s$ and $\mathcal{F}^u $.\\
- The {\em  artificial singularities} are the end points of the cut lines (with non zero cut length).\\
Consider the following set of segments: $\mathcal{W} (l_1, ..., l_m)$ (resp.
$\mathcal{W}^{true} (l_1, ..., l_m)$), that are the union of the stable separatrices starting from all the singularities (resp. true singularities) up to the first intersection with $\mathcal{L} (l_1, ..., l_m)$.  This set is a finite union of compact segments by minimality of $\mathcal{F}^s$, see Fathi {\em et al.} \cite{flp79}.

\begin{defn}
A cut length vector $( l_1, ..., l_m)$ is {\em admissible} (for the pseudo Anosov homeomorphism $f$ with dilatation factor $\lambda$) if the following conditions are satisfied:\\
(i) There is one orbit $\mathcal{O}$ of $\sigma$ such that: $l_i \neq 0$ if and only if $i \in \mathcal{O}$.\\
 There exists  $i_0 \in \mathcal{O}$ such that:\\
(ii) $l_{\sigma(i)} =  \lambda l_i$ if $i \neq i_0$.\\
(iii) $u_{i_0} (l_{i_0}) \in \mathcal{W}^{true}(l_1, ..., l_m)$.

\end{defn}

\begin{lem}
Admissible cut length vectors exist.
\end{lem}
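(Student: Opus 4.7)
The plan is to exploit conditions (i) and (ii) to reduce the existence question to a one-parameter problem, then to use the minimality of $\mathcal{F}^s$ to produce a parameter satisfying (iii). First, I would fix a cycle $\mathcal{O} \subset \{1, \ldots, m\}$ of $\sigma$ of length $p$ and a base index $i_0 \in \mathcal{O}$. Conditions (i) and (ii) then determine the cut length vector up to a single positive parameter $t := l_{i_0}$: set $l_i = 0$ for $i \notin \mathcal{O}$ and $l_{\sigma^k(i_0)}(t) = \lambda^{k-p} t$ for $1 \leq k \leq p-1$, with $l_{i_0}(t) = t$. The problem becomes finding $t > 0$ such that $u_{i_0}(t) \in \mathcal{W}^{\mathrm{true}}(l_1(t), \ldots, l_m(t))$.

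Second, I would invoke the minimality of $\mathcal{F}^s$, which follows from $f$ being pseudo-Anosov. The finitely many stable separatrices emanating from the true singularities (together) form a dense subset of $S$; in particular, they intersect the open unstable half-leaf $u_{i_0}$ in a countable set of points whose $\mu^s$-parameters accumulate at $0$. Hence there is a descending sequence $t_n \searrow 0$ such that for each $n$ one has a true singularity $s_n$ and a stable separatrix $\gamma_n$ from $s_n$ with $u_{i_0}(t_n) \in \gamma_n$. For such a value $t_n$, condition (iii) is equivalent to the geometric statement that the compact sub-arc of $\gamma_n$ from $s_n$ to $u_{i_0}(t_n)$ meets the cut locus $\mathcal{L}(l(t_n))$ only at its endpoint.

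Third, to enforce this non-crossing requirement, I would pick $t$ by a first-hit principle: take $t$ to be the infimum of the positive reals $\tau$ for which some stable separatrix from a true singularity meets $\mathcal{L}(l(\tau))$ at $u_{i_0}(\tau)$, the infimum being taken among those $\tau$ for which the corresponding sub-arc has no earlier crossing with $\bigcup_{j \in \mathcal{O}}\{u_j(r) : 0 < r < l_j(\tau)\}$. Minimality of the choice forces any candidate sub-arc witnessing this infimum to be free of interior crossings with the cut lines in every direction $u_j$, $j \in \mathcal{O}$: an earlier crossing at $u_j(r)$ with $r < l_j(t)$ would yield a strictly smaller admissible parameter $t' = \lambda^{p-k} r < t$ (where $j = \sigma^k(i_0)$), contradicting minimality.

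The main obstacle is showing that this infimum is realized at a \emph{positive} value, i.e., that the descent does not continue all the way to $0$. I would handle this by a compactness argument using the local product structure of $(\mathcal{F}^s, \mathcal{F}^u)$ near $x_0$: only finitely many true singularities are available and only finitely many gates at $x_0$ can be met by the initial segment of a stable separatrix issued from a true singularity before it leaves a fixed small neighborhood of $x_0$; combined with the $\lambda$-dilation relating the lengths $l_{\sigma^k(i_0)}(t)$, this rules out an infinite strictly decreasing sequence of candidates. The alternative, and in practice the cleanest route, is to adapt the rectangle-construction of Fathi–Laudenbach–Poénaru \cite{flp79}, where admissibility corresponds exactly to the compatibility condition that the boundary of a Markov-type partition closes up at the marked point $x_0$.
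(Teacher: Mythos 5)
Your proposal takes a genuinely different route from the paper, and it has a gap at the decisive step. The reduction to a one-parameter family $t = l_{i_0}$ and the use of minimality of $\mathcal{F}^s$ to produce candidate values of $t$ are both sound (the paper uses the same ingredients). The trouble is the ``first-hit'' / infimum step. As written, the set over which you take the infimum (those $\tau$ for which a stable separatrix from a true singularity arrives at $u_{i_0}(\tau)$ \emph{and} the sub-arc has no earlier crossing of $\mathcal{L}(l(\tau))$) is exactly the set of admissible parameters; to take its infimum you must already know it is nonempty, which is precisely what the lemma asks you to prove. If instead you drop the no-earlier-crossing constraint, the infimum is $0$ and not achieved (stable separatrices from true singularities accumulate on $u_{i_0}$ all the way to $x_0$), so there is nothing to contradict. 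There is also an indexing slip in the minimality argument: an earlier crossing at $u_j(r)$ with $j = \sigma^k(i_0)$ produces a smaller parameter $t' = \lambda^{p-k}r$ for which the point landing on $\mathcal{W}^{true}$ is $u_j(r)$, not $u_{i_0}(t')$, so $t'$ does not belong to the candidate set unless you first symmetrize it over all indices in $\mathcal{O}$. You correctly flag the positivity of the infimum as the main obstacle, but the compactness/local-product sketch you offer is not an argument, and the pointer to rebuilding the FLP Markov partition is a reference rather than a proof.

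The paper sidesteps all of this with a simple monotonicity observation. Fix a \emph{test} cut $(d_1,\dots,d_m)$ with $d_i = 1$ on $\mathcal{O}$ and $0$ elsewhere. By minimality, some $\mathcal{W}^{true}(d)$-segment ends at a point $y_0$ on one of the test cut lines, say $[x_0,u_{i_0}(1)]$; set $l = \mu^s([x_0,y_0]) \leqslant 1$ and $l_{\sigma^{-p}(i_0)} = \lambda^{-p} l$. Since $\lambda>1$ one has $l_i \leqslant d_i$ for all $i$, hence $\mathcal{L}(l) \subset \mathcal{L}(d)$, hence $\mathcal{W}^{true}(d) \subset \mathcal{W}^{true}(l)$, and the chosen point $y_0 = u_{i_0}(l_{i_0})$ is still in $\mathcal{W}^{true}(l)$. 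There is no infimum to take and no positivity to worry about; the whole argument is the one-line inclusion $\mathcal{L}(l)\subset\mathcal{L}(d) \Rightarrow \mathcal{W}^{true}(d)\subset\mathcal{W}^{true}(l)$. This same monotonicity is what drives Proposition 4.2 and Lemma 4.7 later, so it is worth isolating as the organizing principle rather than reaching for an infimum argument.
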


\begin{proof} Let $\mathcal{O}$ be one orbit of $\sigma$. Define
$(d_1, ..., d_m)$ such that $d_i = 1$ if $i \in \mathcal{O}$ and $d_i = 0$ otherwise (of course this is non admissible). The set of true singularities is non empty so at least one segment $[x_0, u_{i_0}(d_{i_0})]_{u_{i_0}}$ contains a point $y_0 \in \mathcal{W}^{true}(d_1, ..., d_m)$. 
Define then $l_i = 0 $ if $i\notin \mathcal{O}$, 
$l_{i_0} = l = \mu^s ( [x_0, y_0]_{u_{i_0}})$ and 
$ l_{\sigma^{ - p} (i_0)} = \lambda ^{ - p}  l$ for $p = 1,..., q-1$ where $q$ is the order of $\sigma$.
The first two conditions are satisfied by construction. Next we observe that 
$l_i \leq d_i$ for all $i$, hence
$\mathcal{L}(l_1, ..., l_m) \subset \mathcal{L}(d_1, ...,d_m)$, this implies that 
$\mathcal{W}^{true}(d_1, ..., d_m) \subset \mathcal{W}^{true}(l_1, ...,l_m)$
and therefore $y_0 \in \mathcal{W}^{true}(l_1, ...,l_m)$.
\end{proof}

\subsection{ Single cut representatives.}

The goal of this section is to prove the existence of a canonical subclass among efficient representatives of $[f]$ satisfying some additional properties. This subclass is called the {\em single cut representatives} and the study of their  properties is the main focus of this paper. 

\subsubsection{ Construction of an embedded graph.}

Let $(l_1, ..., l_m)$ be an admissible cut length vector and let 
$\mathcal{L}(l_1, ..., l_m)$,  $\mathcal{W}(l_1, ..., l_m)$ be the associated segments defined above.

{\em Claim:}    $S - \left(\mathcal{L}(l_1, ..., l_m) \cup \mathcal{W}(l_1, ..., l_m)\right)$ is a disjoint union of bifoliated rectangles.

By definition, all the singularities of the foliations 
$\mathcal{F}^s, \mathcal{F}^u$ belongs to 
$\mathcal{L}(l_1, ..., l_m) \cup \mathcal{W}(l_1, ..., l_m)$. So each connected component of $S - \left(\mathcal{L} \cup \mathcal{W}\right)$ is bifoliated by restriction of 
$\mathcal{F}^s, \mathcal{F}^u$ and has no singularity in its interior. Therefore it is a rectangle $R$. The closure $\overline{R}$ of each component $R$ has two sides that are segments in  $\mathcal{L}(l_1, ..., l_m)$ and two sides in 
$\mathcal{W}(l_1, ..., l_m)$. Each $\overline{R}$ is embedded in $S$.
$\square$\\
We define a graph $\Gamma$ and an embedding $h_{\Gamma}$ as follows. 
There are 3 types of edges:\\
(1) One edge for each rectangle $\overline{R}$ of $S - \left(\mathcal{L} \cup \mathcal{W}\right)$,  they are called {\em H-edges}.\\
(2) One edge for each segment of $\mathcal{W}$ issued from a distinguished point $x_i, i\geqslant 1$, they are called {\em peripheral}.\\
(3) One edge for each pair of segments of $\mathcal{W}$ issued from an artificial singularity that does not contain a true singularity and are eventually mapped under $f$ to a segment of $\mathcal{W}$ issued from a distinguished point. These edges are called {\em preperipheral} (see figure 6).






\begin{figure}[htb]
\centerline {\includegraphics[scale=0.5]{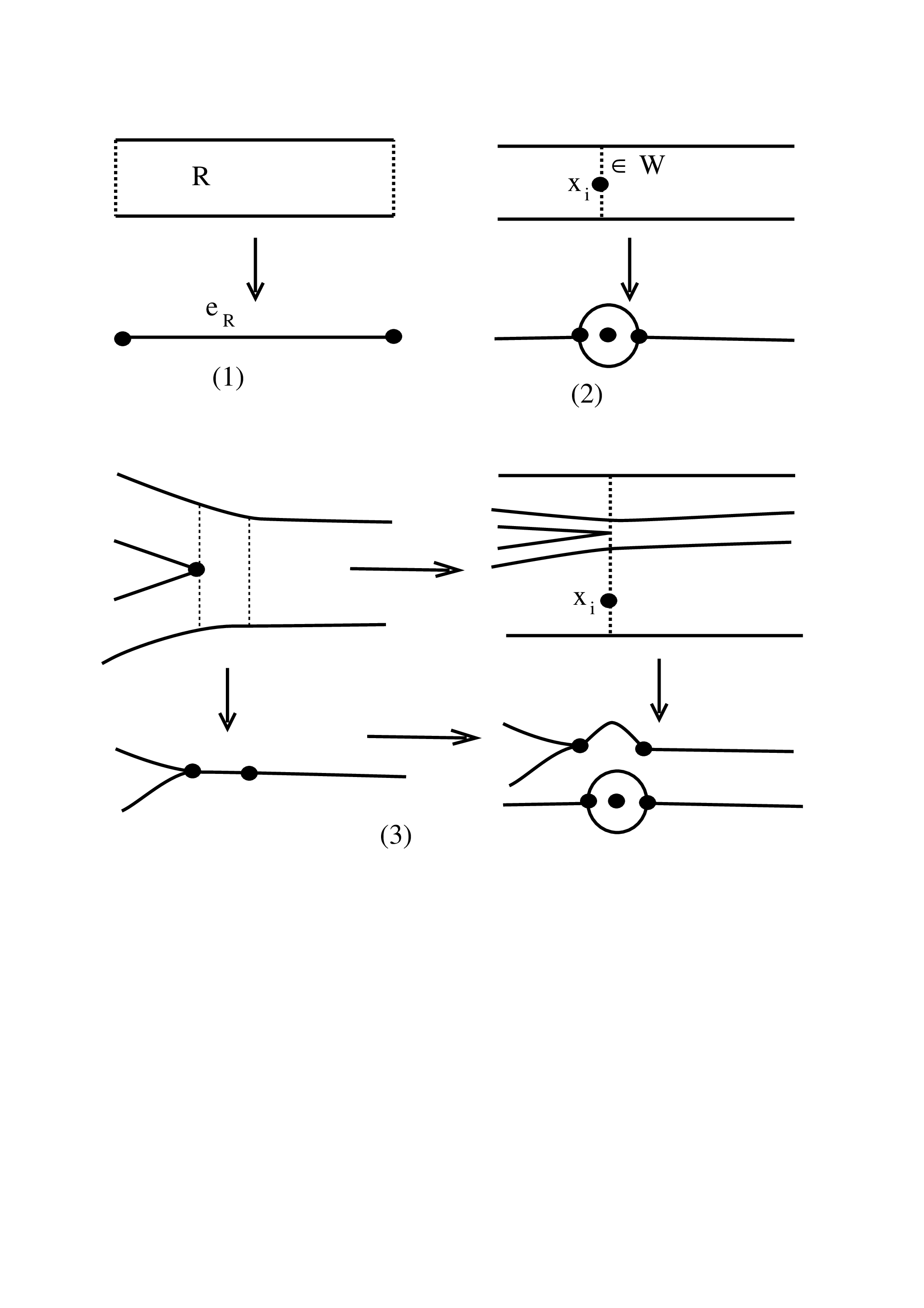}}
\label{fig:6}
\caption{   Construction of a graph.}
\end{figure}

We consider the segments of $\mathcal{W}$ issued from the distinguished points $x_i, i\geqslant 1$ in (2) and (3) as degenerate rectangles (with zero length).
The edges just defined are identified at their extreme points if the corresponding rectangles share a common $\mathcal{W}$ segment. These identifications  define the vertices of the graph denoted $\Gamma$.
Each rectangle is embedded in $S$ and the identification of the extreme points respects the embedding, in other words there is an embedding 
$h_{\Gamma}: \Gamma \rightarrow S_X$ that is well defined up to isotopy.

In addition there is a retraction $\rho:  S_X \rightarrow \Gamma$ obtained by contracting each segment of the stable foliation inside each rectangle to a point on $\Gamma$. This retraction is a homotopy inverse of $h_{\Gamma}$ and thus $(\Gamma, h_{\Gamma})$ is an embedded graph.

{\em Remark.} In the case with only one distinguished point, the previous construction is much simpler: no peripheral and preperipheral edges are necessary.

\subsubsection{ Construction of a map.}

The segments of $\mathcal{L}$ belong to one $f$-orbit of unstable separatrices at  $x_0$ and $\mathcal{W}$ is the union of all the segments of stable separatrices starting at the singularities. This implies $f^{-1} (\mathcal{L}) \subset \mathcal{L}$ and $f (\mathcal{W}) \subset \mathcal{W}$. 
Thus the image under $f$ of each rectangle $R$ of 
$S -  \left(\mathcal{L}\bigcup \mathcal{W}\right)$ is a rectangle in  
$S - \mathcal{L}$, starting at a $\mathcal{W}$-component and ending at another $\mathcal{W}$-component.
This is a Markov partition property, see Fathi {\em et al.} \cite{flp79}.
Let $R$ be such a rectangle and $e_R$ the corresponding edge in $\Gamma$.
Any maximal unstable segment $U$ in $R$, is mapped to an unstable segment 
$f(U)$ that crosses a finite collection of rectangles and $\mathcal{W}$-components issued from the marked points. This sequence of rectangles is independent of the choice made for the segment $U$ in $R$.
From the definition of $\Gamma$, the sequence of rectangles and 
$\mathcal{W}$-components that are crossed by $f(U)$ defines an edge path $p$
in $\Gamma$.
A special case is when $f(U)$ start or end at a $\mathcal{W}$-components $w$ issued from a marked point or at an artificial singularity giving rise to a peripheral or preperipheral edge in $\Gamma$.
In this case we make the choice that the edge path $p$ does not start or end with the corresponding peripheral or preperipheral edge. This ensures condition (G5). This defines the image of the $H$-edges. 
A peripheral edge $e_w$ is associated to a stable segment $w$ issued from a distinguished point. Its image is the peripheral edge corresponding to the segment containing $f(w)$. The image of a preperipheral edge associated to a stable segment $w'$ is either the preperipheral or peripheral edge associated to 
$f(w')$.
All together this defines a map $\Psi: \Gamma \rightarrow \Gamma$.

\begin{proposition}
The triple $\mathcal{E} = (\Gamma, \Psi, h_{\Gamma})$ constructed above 
is an efficient representative of $[f]$. It satisfies the following additional properties:\\
(SC1)  There is exactly one illegal turn of order one, between two $H$-edges at a vertex with more than two gates.\\
(SC2) Every vertex that is crossed by a loop $b_i , i \geqslant 1$ around a distinguished point has exactly 3 gates.\\
An efficient representative satisfying these additional properties is called a
{\em SC-repre\-sen\-ta\-ti\-ve}. If the leaf segment associated to the illegal turn of order one is $u_{i_0}$ we  add: {\em relative to $u_{i_0}$}.
\end{proposition}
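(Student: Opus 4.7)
The plan is to verify in order the topological-representative axiom (G1), the efficient-representative axioms (G2)--(G5), and finally the two additional SC-conditions. First I would check (G1). The graph $\Gamma$ has no valence-one vertex because every point of $\mathcal{L}\cup\mathcal{W}$ used as a vertex is incident to at least two rectangles (or, in the peripheral/preperipheral case, to at least two stable segments by the choice of the preperipheral edges and the peripherality of the $b_i$). The embedding $h_{\Gamma}$ is a homotopy equivalence via the retraction $\rho$ collapsing each rectangle along the stable foliation. The combinatorial map $\Psi$ represents $f$ because $f$ carries each rectangle $R$ across the ordered sequence of rectangles and $\mathcal{W}$-components encoded by $\Psi(e_{R})$, so $f\circ h_{\Gamma}\simeq h_{\Gamma}\circ\Psi$ by a Markov-partition argument in the spirit of Fathi--Laudenbach--Poénaru.

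Next I would verify (G2)--(G5). For (G2), given any H-edge $e_{R}$ take a maximal unstable arc $U\subset R$; then $f^{k}(U)$ is again an embedded unstable arc, hence crosses every rectangle at most monotonically, giving a reduced word in $E(\Gamma)^{\pm 1}$. The same argument applies to peripheral and preperipheral edges, where the action is simplicial. Condition (G3) is automatic: because $f$ permutes setwise $\{x_{1},\ldots,x_{n}\}$ and the loops $b_{i}$ come from the $\mathcal{W}$-segments issued from these points, the peripheral subgraph $P$ is invariant and $\Psi|_{P}$ is a bijection of edges. The block decomposition of the incidence matrix follows at once. For (G5) one invokes the deliberate choice made in constructing $\Psi$: the edge-path $p$ representing $\Psi(e_{R})$ is forced not to begin or end with a peripheral/preperipheral edge, so an illegal turn of order one must have its two image paths starting with $H$-edges.

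Condition (G4), the irreducibility of $M_{H}$, is where the pseudo-Anosov hypothesis is essential and I expect this to be the main obstacle. For any two H-rectangles $R$ and $R'$, the density of unstable leaves combined with the uniform expansion by $\lambda>1$ forces some iterate $f^{k}(R)$ to cross $R'$ transversely, producing an occurrence of $e_{R'}^{\pm 1}$ in $\Psi^{k}(e_{R})$. Translating this mixing property into strict positivity of $M_{H}^{k}$ needs care: one must exclude the case that $f^{k}(R)$ only meets $R'$ along $\mathcal{W}$-components or cut lines, which is handled by choosing $U$ in the interior of $R$ and noting that admissibility ensures the stable saturation of $\mathcal{L}\cup\mathcal{W}$ is nowhere dense in leaves.

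Finally, for the SC-conditions, admissibility condition (iii) is precisely what singles out one distinguished vertex. At $y_{0}=u_{i_{0}}(l_{i_{0}})$ the endpoint of the cut line sits inside $\mathcal{W}^{true}$, so three $\mathcal{W}$-arcs (two from the true singularity, plus the cut line ending on it) concur, producing a vertex of $\Gamma$ with at least three $H$-gates and a single $H$-$H$ illegal turn whose two germs have identical image under $\Psi$ at first step, i.e.\ an illegal turn of order one. At every other tip $u_{i}(l_{i})$, $i\neq i_{0}$, admissibility (ii) gives $l_{\sigma(i)}=\lambda l_{i}$, hence $f$ maps the cut-line tip to another cut-line tip, so the turn there is illegal of higher order and does not violate uniqueness; this proves (SC1). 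For (SC2), a vertex crossed by a loop $b_{i}$ ($i\geqslant 1$) has two peripheral germs forming a single gate (they lie in a common iterate image by (G3)) and two $H$-gates coming from the two rectangles flanking the peripheral segment, giving exactly three gates.
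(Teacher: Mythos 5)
Your argument follows the paper's own proof essentially line by line: (G1)--(G5) are read off the Markov-partition and retraction structure, (SC1) is derived from admissibility condition (iii) forcing more than two gates at the endpoint $u_{i_0}(l_{i_0})$ while condition (ii) pushes all other turns to order greater than one, and (SC2) from the peripheral structure. Two small geometric slips that do not affect validity: the cut line along $u_{i_0}$ is an unstable arc, so it is not itself one of the (stable) $\mathcal{W}$-arcs "concurring" at $u_{i_0}(l_{i_0})$; and since $\Psi|_P$ is a simplicial homeomorphism, the two peripheral germs of $b_i$ at a vertex are never identified by an iterate of $\Psi$, hence they lie in two \emph{distinct} gates, with a single $H$-gate across the cut line completing the count --- the gate partition is $2+1$, not $1+2$, though the total of three is unchanged.
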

\begin{proof}  We already proved that $(\Gamma, h_{\Gamma})$ is an embedded graph. The map $\Psi$  is continuous since connected components of $\mathcal{W}$ are mapped to connected components of $\mathcal{W}$ under $f$ so vertices are mapped to vertices under $\Psi$ and continuity comes from the continuity of $f$ on the rectangles.
Next we check that $\Psi$ represents $[f]$. To this end we already defined a retraction $\rho: S_X \rightarrow \Gamma$ by contracting the rectangles along the stable segments. The definition of $\Psi$ via the action of $f$ on the unstable segments in each rectangle implies that $\rho  \circ f$ is homotopic to
$\Psi \circ \rho$. The retraction $\rho$ being an homotopy inverse of $h_{\Gamma}$, we obtain that: 
$ f \circ h_{\Gamma} \simeq h_{\Gamma} \circ \Psi$.
Let us now prove the other properties:\\
(G1) The graph $\Gamma$ has no valency one vertices, because every rectangle is glued to rectangles at both of its $\mathcal{W}$ sides. The other properties of a topological representative were proved above.\\
(G2) The peripheral and preperipheral edges are mapped by $\Psi$ to peripheral or preperipheral edges,  in particular words have combinatorial length one. No cancellation can occur when iterating these edges. The iterates $\Psi^k (e)$ of an $H$-edge is defined by iterating an unstable segment. The corresponding edge path cannot backtrack by invariance of the unstable foliation $\mathcal{F}^u$ under $f^k$.\\
(G3) Comes from the definition of peripheral edges.\\
(G4) The matrix $M_H$ is irreducible. Indeed any sufficiently long unstable segment crosses every rectangle by minimality of $\mathcal{F}^u$ and for each maximal unstable segment in any rectangle a large enough iterate under $f$ is as long as we wish and therefore crosses every rectangle.\\
(G5) Comes from our choice of the image of $H$-edges.\\
(SC1) Illegal turns only occurs between $H$-edges by construction. Two $H$-edges define an illegal turn when the corresponding rectangles are on both sides of a cut line along $u_i$ and are glued together by a corner. This corner is thus the end point of a cut line $u_i(l_i)$. When the leaf satisfies 
$l_{\sigma (i)} = \lambda  l_i$ then the image under $f$ of the corner $u_i(l_i)$
is exactly the corner $u_{\sigma (i)}(l_{\sigma (i)})$. The image under $\Psi$ of the turn  corresponding to $u_i(l_i)$ is thus the turn corresponding to 
$u_{\sigma (i)}(l_{\sigma (i)})$ and this turn is of order greater than one. 
There is only one leaf  $u_{i_0}$ so that $l_{\sigma (i_0)} < \lambda  l_{i_0}$ by the admissibility property. The corresponding turn is illegal of order one. The fact that the vertex at which this specific  illegal turn arises has more than two gates comes from property (iii) of definition  3.1.\\
(SC2) Comes from the definition of the peripheral edges and their images. \end{proof}

\begin{lem}
Let $(l_1, ..., l_m)$ be an admissible cut length vector and 
$\mathcal{E} = (\Gamma, \Psi, h_{\Gamma} )$ the SC-representative obtained by the above construction. Then 
$\mathcal{L}_{\mathcal{E}}( T^{0}_{\mathcal{E}} ) =  (l_1, ..., l_m) \in \mathbb{R}_{+}^{m}$, where  
$\mathcal{L}_{\mathcal{E}}$ is the map defined in lemma 2.2.
\end{lem}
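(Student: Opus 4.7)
The plan is to unpack the bijection $u_{\mathcal{E}}$ and the length function $\mathcal{L}_{\mathcal{E}}$ from Lemma 2.2 on the SC-representative built in Proposition 3.4, and match both pieces of data against the cut length vector coordinate by coordinate. The argument splits naturally according to whether an index $i$ belongs to the $\sigma$-orbit $\mathcal{O}$ appearing in Definition 3.1, i.e.\ according to whether $l_i>0$ or $l_i=0$.

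First I would identify the illegal turns of $\mathcal{E}$ with the cut-line endpoints. By the construction of $\Gamma$, an illegal turn occurs precisely at a vertex where two $H$-rectangles meet on opposite sides of the terminal endpoint $u_i(l_i)$ of a cut line with $l_i>0$, and this is made sharper in the proof of (SC1): the image of this turn under $\Psi$ is the turn at $u_{\sigma(i)}(l_{\sigma(i)})$, so iterating yields the ($\sigma$-cycle of) illegal turns indexed exactly by $i\in\mathcal{O}$. The bijection $u_{\mathcal{E}}$ therefore sends the illegal turn at the corner $u_i(l_i)$ to $u_i$, for every $i\in\mathcal{O}$. For $i\notin\mathcal{O}$ the separatrix $u_i$ carries no cut line and produces no illegal turn; in the train track $(\tau,\widetilde{\Psi})(\mathcal{E})$ such a separatrix necessarily appears as an infinitesimal turn of the polygon around $x_0$ that is traversed by the peripheral loop $b_0$ (this is the only remaining possibility consistent with $u_{\mathcal{E}}$ being a bijection). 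Hence $u_{\mathcal{E}}$ restricted to infinitesimal turns crossed by $b_0$ hits exactly $\{u_i : i\notin\mathcal{O}\}$.

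Next I would compute $\mathcal{L}_{\mathcal{E}}(t)$ in each case. For $i\notin\mathcal{O}$, the corresponding $t\in T^0_{\mathcal{E}}$ is an infinitesimal turn, so by definition $\mathcal{L}_{\mathcal{E}}(t)=0=l_i$. For $i\in\mathcal{O}$, let $t$ be the illegal turn at the corner $u_i(l_i)$. In the gluing construction of $\mathcal{R}^{[\mathcal{E}]}\to S$ via the isometric identifications $A_t\sim B_t$ along $\partial\mathcal{R}^{[\mathcal{E}]}$, the two boundary arcs $A_t$ and $B_t$ from the cusp $C_t$ to the adjacent periodic points project onto the cut line $[x_0,u_i(l_i)]_{u_i}$, with the cusp going to $x_0$ and the periodic endpoints going to $u_i(l_i)$. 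Consequently
\[
\text{length}(A_t)=\text{length}(B_t)=\mu^s\bigl([x_0,u_i(l_i)]_{u_i}\bigr)=l_i,
\]
so $\mathcal{L}_{\mathcal{E}}(t)=\tfrac{1}{2}\text{length}(N_t)=\tfrac{1}{2}(l_i+l_i)=l_i$.

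The only delicate point is the compatibility between the intrinsic gluing that produces $(S,f)$ from $\mathcal{R}^{[\mathcal{E}]}$ (as described in Section 2.2) and the extrinsic rectangle decomposition cut out of $S$ by $\mathcal{L}\cup\mathcal{W}$ (as used in Section 3.2). I would verify this by checking, in a neighbourhood of the cut-line endpoint $u_i(l_i)$, that the two $H$-rectangles meeting at the corresponding vertex of $\Gamma$ are, after the blow-up and gluing of Section 2.2, glued back along exactly the two copies of $[x_0,u_i(l_i)]_{u_i}$; this is essentially tautological from the definition of $\Gamma$ and $\Psi$ but deserves to be spelled out, since it is what pins down the identification of $A_t\cup B_t$ with twice the cut line and therefore powers the length computation above.
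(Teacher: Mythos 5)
Your identification of the combinatorial data (illegal turns with endpoints of cut lines for $i\in\mathcal{O}$, infinitesimal turns crossed by $b_0$ with the separatrices $u_i$, $i\notin\mathcal{O}$, boundary periodic points with $x_0$) follows the same route as the paper and is correct. However, the length computation has a genuine gap that you flag but do not close, and your diagnosis of what is delicate is off the mark. The map $\mathcal{L}_{\mathcal{E}}$ of Lemma 2.2 measures Nielsen paths with the \emph{Perron--Frobenius eigenvector} metric $L$ on $\Gamma$ (and on $\mathcal{R}^{[\mathcal{E}]}$), whereas the $l_i$ are $\mu^s$-measures of cut lines in $S$. The geometric identification of $A_t\cup B_t$ with two copies of the cut line, via $\mathcal{IN}$, is indeed essentially tautological — but it only identifies these arcs as \emph{sets}, not as metric segments under two a priori different length functions. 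Your chain of equalities $\text{length}(A_t)=\mu^s\bigl([x_0,u_i(l_i)]_{u_i}\bigr)=l_i$ therefore silently equates the PF-eigenvector length with the $\mu^s$-length, which is exactly the point requiring proof. The paper's argument closes this gap with an explicit verification: it shows that the vector of $\mu^s$-lengths of the rectangles of $S-(\mathcal{L}\cup\mathcal{W})$ is itself an eigenvector of the incidence matrix $M(\Gamma,\Psi)$ for the eigenvalue $\lambda$, using the Markov property that $f$ multiplies $\mu^s$-lengths of unstable segments by $\lambda$ and that an image segment crosses rectangles without backtracking, so its $\mu^s$-length is the corresponding sum of entries. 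Combined with Perron--Frobenius uniqueness (and the normalisation of $L$ and $\mu^s$ in their projective classes), this is what legitimises replacing the eigenvector length by the $\mu^s$-length. You need to add this eigenvector computation; calling the compatibility ``essentially tautological'' and leaving it at that is precisely where your argument is incomplete.
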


\begin{proof} The stable leaf segments from $x_0$ up to the first intersection with $\mathcal{L}$ belong to $\mathcal{W}$. These segments are permuted under $f$ in cycles of period $q$ (the order of the permutation $\sigma$).
The definition of the map $\Psi$ from $f$ implies that these segments are retracted to $q$ points on $\Gamma$ under the retraction $\rho$, and are permuted in a cycle of period $q$ under $\Psi$.
From the train track construction of paragraph 2.2.2, these periodic points  belong to the boundary component of the surface $\mathcal{R}^{\mathcal{E}}$ that is identified with $\mathcal{L}$. Therefore these points are exactly the boundary periodic points that are Nielsen equivalent.
The length of the Nielsen path, measured with  $\mu^s$, between two consecutive such points and crossing the illegal turn $t_i$ is by construction $2 l_i$, where  the index of the turn and the index of the corresponding unstable separatrix are identified.
It remains  to check that the $\mu^s$ length coincides with the eigenvector length given by Perron-Frobenius's theorem. Let $R_j$ be a rectangle of the partition given by 
$S - \left(\mathcal{L} \bigcup \mathcal{W}\right)$ and let $L_j$ be its $\mu^s$ length which is well defined since all the maximal unstable segments in $R_j$ 
have the same $\mu^s$ length. The image under $f$ of any maximal unstable segment in $R_j$ has $\mu^s$ length $\lambda  L_j$. This length is also the sum of the $\mu^s$ length of the rectangles crossed by this unstable segment (no back-tracks). Therefore the vector $L$ is an eigenvector of the incidence matrix $M(\Gamma, \Psi)$ for the eigenvalue $\lambda$.
\end{proof}
The following is a converse of  lemma 3.4.

\begin{proposition}
Let  $\mathcal{E} = (\Gamma, \Psi, h_{\Gamma} )$ be a  SC-representative of $[f]$. Then 
$\mathcal{L}_{\mathcal{E}}( T^{0}_{\mathcal{E}} ) =  (l_1, ..., l_m) \in \mathbb{R}_{+}^{m}$ is an admissible cut length vector.
\end{proposition}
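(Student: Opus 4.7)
The plan is to read off the vector $(l_1, \ldots, l_m) = \mathcal{L}_{\mathcal{E}}(T^0_{\mathcal{E}})$ directly from the train-track construction of Section 2.2 applied to $\mathcal{E}$, and then to verify the three admissibility conditions of Definition 3.1 by tracking how $\Psi$ acts on $T^0_{\mathcal{E}}$ and how $f_{\mathcal{E}}$ acts on the cusps and Nielsen paths of $\partial \mathcal{R}^{[\mathcal{E}]}$. The initial setup combines the preceding lemma with the identification $\mathcal{IN}: \mathcal{R}^{[\mathcal{E}]} \to S$: the resulting bijection $u_{\mathcal{E}}$ intertwines the partial $\Psi$-action on $T^0_{\mathcal{E}}$ with the permutation $\sigma$ of separatrices at $x_0$, in the sense that $u_{\mathcal{E}}(\Psi(t)) = \sigma(u_{\mathcal{E}}(t))$ whenever $\Psi(t)$ is still a turn in $T^0_{\mathcal{E}}$.

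For condition (i), observe that an illegal turn of order $k \geq 2$ is sent by $\Psi$ to an illegal turn of order $k-1$, while the unique illegal turn $t_{i_0}$ of order $1$ supplied by (SC1) is sent to a cancellation rather than to a turn. Hence the illegal turns of $\mathcal{E}$ form a single $\Psi$-ladder with top $t_{i_0}$, and through the equivariance of $u_{\mathcal{E}}$ this ladder matches exactly the $\sigma$-orbit $\mathcal{O}$ of $i_0 := u_{\mathcal{E}}(t_{i_0})$. The remaining separatrices at $x_0$ come from the infinitesimal turns crossed by $b_0$, on which $\mathcal{L}_{\mathcal{E}}$ vanishes by definition, so $l_i \neq 0$ if and only if $i \in \mathcal{O}$.

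Condition (ii) is obtained by comparing Nielsen-path lengths. For $i \in \mathcal{O} \setminus \{i_0\}$, put $t = u_{\mathcal{E}}^{-1}(i)$, which is illegal of order $\geq 2$, so $\Psi(t)$ is itself an illegal turn associated to $\sigma(i)$. On $\partial \mathcal{R}^{[\mathcal{E}]}$, cusps and periodic points alternate, and $f_{\mathcal{E}}$ is a homeomorphism permuting cusps; it therefore preserves this adjacency pattern, so the two periodic points neighbouring $C_t$ are sent to the two neighbouring $C_{\Psi(t)}$. Consequently $f_{\mathcal{E}}(N_t) = N_{\Psi(t)}$, and the uniform $\lambda$-expansion of $f_{\mathcal{E}}$ along the boundary yields $\mathrm{length}(N_{\Psi(t)}) = \lambda\,\mathrm{length}(N_t)$, which after halving is precisely $l_{\sigma(i)} = \lambda l_i$.

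For condition (iii), (SC1) forces the vertex $v_0$ of $\Gamma$ hosting $t_{i_0}$ to satisfy $g(v_0) \geq 3$. The blow-up step of the train-track construction then produces at $v_0$ a $g(v_0)$-pronged singularity of the invariant foliations, whose image under $\mathcal{IN}$ is a true singularity of $(\mathcal{F}^s, \mu^s)$ in $S$. The stable prong of $v_0$ lying between the two gates that define $t_{i_0}$ exits the polygon $\Delta(v_0)$ toward the cusp $C_{t_{i_0}}$ and reaches it without meeting any further cut line; after identification this becomes a stable arc from the true singularity to $u_{i_0}(l_{i_0})$, establishing $u_{i_0}(l_{i_0}) \in \mathcal{W}^{\mathrm{true}}(l_1, \ldots, l_m)$. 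I expect the main obstacle to be the careful verification of the equivariance of $u_{\mathcal{E}}$ and of the claim that no artificial singularity interposes between $v_0$ and $u_{i_0}(l_{i_0})$; here (SC2) is used to exclude spurious two-gate peripheral vertices that could otherwise produce an earlier intersection with $\mathcal{L}$.
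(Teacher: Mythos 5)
Your proposal follows essentially the same route as the paper's proof: index the illegal turns and infinitesimal turns of $T^0_{\mathcal{E}}$ by unstable separatrices via $u_{\mathcal{E}}$, use the uniqueness of the order-one illegal turn supplied by (SC1) to show that all illegal turns lie in one $\sigma$-orbit (condition (i)), compare lengths of periodic Nielsen paths under $f_{\mathcal{E}}$ and the $\lambda$-expansion along $\partial\mathcal{R}^{[\mathcal{E}]}$ to get $l_{\sigma(i)} = \lambda l_i$ (condition (ii)), and read condition (iii) off the fact that the illegal turn of order one sits at a vertex with more than two gates. These are exactly the paper's steps.

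One remark worth making: the appeal to (SC2) in your discussion of condition (iii) is a red herring. The paper's proof does not use (SC2) at all, and correctly so --- (SC2) is a normalisation condition on the vertices lying on the boundary loops $b_i$ around the \emph{other} marked points $x_1,\ldots,x_n$; it is needed for the converse direction (Lemma 3.4, which derives (SC2) from admissibility so that the two results produce a clean bijection), but it has no bearing on whether $u_{i_0}(l_{i_0})$ lies in $\mathcal{W}^{\mathrm{true}}$. The definition of $\mathcal{W}^{\mathrm{true}}$ already stops stable separatrices at the \emph{first} intersection with $\mathcal{L}$, so no hypothetical ``earlier intersection'' along the way needs to be excluded by a separate structural condition. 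You flagged this as an expected obstacle rather than asserting it, but the obstacle simply is not there. The genuine content of (iii) is the equivalence between the endpoint of the distinguished cut line being on a $\mathcal{W}^{\mathrm{true}}$-segment and the hosting vertex having at least three gates --- this is precisely the correspondence that the paper invokes (and that you also invoke) via the blow-up construction and the cone structure at the cusp $C_{t_{i_0}}$ after the identification $\mathcal{IN}$.
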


\begin{proof}
If $t_i$ is an illegal turn of order greater than one then it is mapped exactly to another turn
$t_j$. The turn being illegal means that some of its iterate is degenerate, ie the beginning of the two edges are mapped to a non trivial common edge path. From paragraph 2.2.3, for each illegal turn $t_i$ there is a pair of periodic points on the boundary of the surface $\mathcal{R}^{\mathcal{E}}$ that are Nielsen equivalent for some iterate. The corresponding periodic Nielsen path has length $l(N_i)$. If the turn is illegal of order greater than one then the image of $N_i$ is a Nielsen path between the image periodic points, it crosses the illegal turn $t_j$
and has length $ l (N_j) = \lambda  l(N_i)$. The construction of the surface
$\mathcal{R}^{\mathcal{E}}$  and its pair of measured foliations associates
to an illegal turn an unstable separatrix by the map $u_{\mathcal{E}}$ of lemma 2.2. Therefore if $u_i$ is associated to $t_i$ 
then $u_{\sigma(i)}$ is associated to the image turn $t_j$ and the above equality reads: $ l_{\sigma(i)} = \lambda  l_i$ for all illegal turns in its orbit except the one of order one for which:
$ l_{\sigma(i_{0})} < \lambda  l_{i_{0}}$. Since there is only one illegal turn of order one then only one such orbit exists and it is associated to a single orbit of unstable separatrices.
The fact that the illegal turn $t_{i_{0}}$ of order one is located at a vertex with more than 3 gates implies, from the construction of the surface  $\mathcal{R}^{\mathcal{E}}$, that the cusp of the surface corresponding to $t_{i_{0}}$ is the extremity of a cut line and belongs to $\mathcal{W}^{true}$.
\end{proof}

Note that lemma 3.4 and proposition 3.5 imply the existence of a bijection between admissible cut lengths and SC-representatives.

\section{Operations on the set of SC-representatives.}

\subsection{SC-foldings.}

In this paragraph we define an operation on the set of SC-representatives, using the bijection between SC-representatives and admissible cut length vectors that was proved in the previous section. This operation is defined on cut lengths and in section 6 it will be translated in the combinatorial setting.
Let us start with some preliminary observations. Let $\mathcal{E}$ be a SC-representative with cut lengths $(l_1, ..., l_m)$. The {\em total length} (also called the volume in other contexts) of 
$\mathcal{E}$ is $l(\mathcal{E}) = \sum_{e\in Edge(\Gamma)} l(e)$.
For SC-representatives this length is also given by 
$l(\mathcal{E}) = \sum_{i = 1}^{ m} l_i$.\\
{\bf Observation 1:} If $\mathcal{E}$ is a SC-representative relative to the separatrix $u_{i_0}$ then:\\
$l(\mathcal{E}) = l_{i_0} ( 1 + \frac{1}{\lambda} + ...+  \frac{1}{\lambda ^{q-1}} )$, where $q$ is the period of the separatrix $u_{i_0}$ (the order of the permutation $\sigma$).\\
This obvious computation implies that SC-representatives relative to $u_i$ are parametrised by  the length $l_{i_0}$.\\
{\bf Observation 2:}  The image, under $f^q$, of any artificial singularity belongs to $\mathcal{W}^{true}$.\\
This is just the observation that an artificial singularity of a SC-representative is the end point of a cut line with positive cut length, together with: 
$u_{i_0} \in \mathcal{W}^{true}$ and 
$f(\mathcal{W}^{true}) \subset \mathcal{W}^{true}$.

\begin{defn}
Let $\mathcal{E}$ be a SC-representative relative to $u_{i_0}$ of cut length 
$(l_1, ..., l_m)$ and let $l >0$ be such that 
$u_{i_0} (l_{i_0} - l) \in \mathcal{W}^{true}$. The {\em SC-folding of length $l$} is given on cut lengths by the map 
$ (l_1, ..., l_m) \rightarrow (l'_1, ..., l'_m)$, where:\\
$l'_{i_0} = l_{i_0} - l$ and $l'_{\sigma^{-p}({i_0})} = l_{\sigma^{-p}({i_0})} - \frac{1}{\lambda^p} l $  for $p = 1, ..., q-1$ and\\
 $l'_j = l_j = 0$ if $j$ does not belongs the the $\sigma$-orbit of $i_0$.
\end{defn}

\begin{proposition}
If $(l'_1, ..., l'_m)$ is obtained from an admissible cut length $(l_1, ..., l_m)$ by a SC-folding of length $l>0$ then $(l'_1, ..., l'_m)$ is admissible.
We will say that the corresponding SC-representative $\mathcal{E '}$ is  obtained 
from $\mathcal{E}$ by a SC-folding of length $l$. 
Its total length is \\
$ l(\mathcal{E '}) = l (\mathcal{E}) - 
( 1 + \frac{1}{\lambda} + ...+  \frac{1}{\lambda ^{q-1}} )l$.
\end{proposition}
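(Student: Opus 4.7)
The plan is to verify, in turn, each of the three admissibility conditions (i), (ii), (iii) of Definition 3.1 for the new vector $(l'_1,\dots,l'_m)$, and then to carry out the direct sum for the total length.

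First I would exploit the admissibility of the old vector to get explicit formulas. Condition (ii) applied to $(l_1,\dots,l_m)$ tells us that on the orbit $\mathcal{O}$ of $i_0$ we have $l_{\sigma^{-p}(i_0)} = l_{i_0}/\lambda^p$ for $p=1,\dots,q-1$. Substituting this into the definition of the SC-folding gives $l'_{\sigma^{-p}(i_0)} = (l_{i_0}-l)/\lambda^p$ and $l'_{i_0}=l_{i_0}-l$. With these clean formulas in hand, condition (ii) for the new vector (with the same distinguished index $i_0$) is a one-line verification: for $i = \sigma^{-p}(i_0)\neq i_0$, $\lambda\, l'_i = \lambda (l_{i_0}-l)/\lambda^p = (l_{i_0}-l)/\lambda^{p-1} = l'_{\sigma(i)}$, including the case $p=1$ in which $\sigma(i)=i_0$.

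Next I would check (i). By definition of the SC-folding the indices outside $\mathcal{O}$ still carry the value $0$, so the only point is to see that $l'_j$ is strictly positive for every $j\in\mathcal{O}$. The very hypothesis of Definition 4.1, namely that $u_{i_0}(l_{i_0}-l)\in\mathcal{W}^{\mathrm{true}}(l_1,\dots,l_m)$, implicitly requires $l_{i_0}-l>0$; then the formula $l'_{\sigma^{-p}(i_0)}=(l_{i_0}-l)/\lambda^p$ is positive for every $p$. Condition (iii) is the point requiring a small argument: the hypothesis only gives $u_{i_0}(l_{i_0}-l)\in\mathcal{W}^{\mathrm{true}}(l_1,\dots,l_m)$, whereas we need it in $\mathcal{W}^{\mathrm{true}}(l'_1,\dots,l'_m)$. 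I would reuse the monotonicity argument already exploited in the proof of Lemma 3.2: since $l'_i\leq l_i$ for every $i$, one has $\mathcal{L}(l'_1,\dots,l'_m)\subset \mathcal{L}(l_1,\dots,l_m)$, and therefore stable separatrices issued from true singularities reach $\mathcal{L}'$ no sooner than they reach $\mathcal{L}$. Consequently $\mathcal{W}^{\mathrm{true}}(l_1,\dots,l_m)\subset \mathcal{W}^{\mathrm{true}}(l'_1,\dots,l'_m)$, and condition (iii) transfers for free.

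Finally, the total length identity is just an arithmetic consequence of the formulas above:
\[
l(\mathcal{E}')=\sum_{p=0}^{q-1} l'_{\sigma^{-p}(i_0)} =\sum_{p=0}^{q-1}\frac{l_{i_0}-l}{\lambda^p} = l(\mathcal{E}) - l\Bigl(1+\tfrac{1}{\lambda}+\cdots+\tfrac{1}{\lambda^{q-1}}\Bigr),
\]
using Observation 1 to rewrite the first sum as $l(\mathcal{E})$ in the old vector plus the corrections.

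The only genuine subtlety is the set-inclusion argument in step (iii): one has to be sure that shortening the cut lines cannot spoil the property of landing inside $\mathcal{W}^{\mathrm{true}}$, and the clean way to see this is the monotonicity $\mathcal{L}'\subset\mathcal{L}$, hence $\mathcal{W}^{\mathrm{true}}\subset\mathcal{W}'^{\,\mathrm{true}}$. Everything else reduces to substituting the explicit geometric-series formulas for the entries of the orbit.
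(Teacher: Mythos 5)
Your proof is correct and takes essentially the same route as the paper: the crux (condition (iii) of admissibility) is established via the monotonicity $l'_i\le l_i \Rightarrow \mathcal{L}'\subset\mathcal{L} \Rightarrow \mathcal{W}^{\mathrm{true}}(l)\subset\mathcal{W}^{\mathrm{true}}(l')$, which is word-for-word the argument in the paper's two-line proof. The only difference is that you spell out the routine verifications of (i), (ii), and the total-length formula, which the paper leaves implicit.
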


\begin{proof} By definition of $l$: 
$u_{i_0} (l_{i_0} - l) \in \mathcal{W}^{true}(l_1, .., l_m)$ and, since 
$ l'_i \leqslant l_i$ for all $i$, then  
$\mathcal{W}^{true}(l'_1, .., l'_m) \supset \mathcal{W}^{true}(l_1, .., l_m)$, proving admissibility. \end{proof}

\subsubsection{Elementary SC-foldings.}

First we prove that SC-foldings do exist for each admissible cut length.

\begin{lem}
Let $\mathcal{E}$ be a SC-representative relative to the leaf $u_{i_0}$ of cut lengths $(l_1, .., l_m)$. There is a point of  $\mathcal{W}^{true}(\mathcal{E})$ on the interior of the cut line 
$[ x_0, u_{i_0} (l_{i_0}) ]_{u_{i_0}}$.
\end{lem}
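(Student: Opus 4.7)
The plan is to apply $f^{-q}$ to the endpoint $u_{i_0}(l_{i_0})$ of the cut line, exploiting the fact that $\sigma^q=\mathrm{id}$ preserves the leaf $u_{i_0}$ setwise and contracts $\mu^s$-distances along it by $\lambda^q$. By admissibility condition (iii), there exists a stable separatrix $w$ from a true singularity $s$ (with $s\neq x_0$, since $l_{i_0}>0$) whose first intersection with $\mathcal{L}$ is $u_{i_0}(l_{i_0})$. Set $s_q:=f^{-q}(s)$, a true singularity, and $w_q:=f^{-q}(w)$, a stable separatrix from $s_q$; then $f^{-q}(u_{i_0}(l_{i_0}))=u_{i_0}(l_{i_0}/\lambda^q)$ belongs to $w_q$ and lies strictly inside the cut line along $u_{i_0}$.

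Let $p$ denote the first intersection of $w_q$ with $\mathcal{L}$ starting from $s_q$; by construction $p\in \mathcal{W}^{true}(\mathcal{E})$, so the entire task reduces to locating $p$ interior to $[x_0,u_{i_0}(l_{i_0})]$. The key step is to push $p$ forward: the point $f^q(p)$ lies on the segment $[s,u_{i_0}(l_{i_0})]\subset w$ (the $f^q$-image of the corresponding segment of $w_q$) and belongs to $f^q(\mathcal{L})=\bigcup_{j\in\mathcal{O}}[x_0,u_j(\lambda^q l_j)]$. Since the interior of $[s,u_{i_0}(l_{i_0})]$ avoids $\mathcal{L}$, any interior crossing of this segment with an unstable separatrix $u_j$ must occur at a point $u_j(t)$ with $t>l_j$; hence either $f^q(p)=u_{i_0}(l_{i_0})$ (so that $p=u_{i_0}(l_{i_0}/\lambda^q)$, already interior to the cut line along $u_{i_0}$), or $f^q(p)=u_j(t)$ with $t\in(l_j,\lambda^q l_j]$, in which case $p=u_j(t/\lambda^q)$ lies on the cut line along $u_j$ at $\mu^s$-distance in $(l_j/\lambda^q,l_j]$ from $x_0$. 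When $j=i_0$ and $t<\lambda^q l_{i_0}$, $p$ is interior to $[x_0,u_{i_0}(l_{i_0})]$, and the claim follows.

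The main obstacle is to handle the residual possibilities in the second sub-case, namely $j\neq i_0$ (so $p$ is interior to a different cut line) or $t=l_j$ (so $p$ is the artificial singularity $u_j(l_j)$). In the first of these, one iterates $f^{-q}$ and re-applies the analysis; the $\sigma$-periodicity and the admissibility exception $l_{\sigma(i_0)}<\lambda l_{i_0}$ ensure that only the cut line along $u_{i_0}$ is genuinely "lengthened" relative to $f^{-q}(\mathcal{L})$, so after finitely many iterations the first intersection is forced onto $u_{i_0}$. In the second, if $t=\lambda^q l_{i_0}$ and $j=i_0$ one gets $p=u_{i_0}(l_{i_0})$, which by transversality of distinct stable leaves would force $w_q=w$ and hence $s$ to be $f^q$-fixed along $w$; this degenerate configuration is resolved by replacing the starting separatrix with a suitable $f^q$-iterate of $w$, reducing to the first sub-case.
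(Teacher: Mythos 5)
Your strategy is genuinely different from the paper's. The paper starts from \emph{any} point of $\mathcal{W}^{true}$ lying on $\mathcal{L}$ and pushes it forward under $f$, using that for $i\neq i_0$ the map $f$ sends the cut line $[x_0,u_i(l_i)]_{u_i}$ exactly onto $[x_0,u_{\sigma(i)}(l_{\sigma(i)})]_{u_{\sigma(i)}}$ and that $f(\mathcal{W}^{true})\subset\mathcal{W}^{true}$, until the tracked point sits on the $u_{i_0}$ cut line. You instead exploit admissibility (iii) and pull the witnessing stable segment $w$ back by $f^{-q}$. Your opening case analysis (where the first $\mathcal{L}$-intersection $p$ of $w_q=f^{-q}(w)$ can lie) is sound, and you are right to worry about interiority, which the paper's proof leaves implicit; but the residual cases are not correctly handled.

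In the case $j\neq i_0$ with $p$ interior to $[x_0,u_j(l_j)]_{u_j}$, the claim that ``only the cut line along $u_{i_0}$ is genuinely lengthened relative to $f^{-q}(\mathcal{L})$'' is false: since $\sigma^q=\mathrm{id}$ on $\mathcal{O}$ one has $f^{-q}(\mathcal{L})=\bigcup_j[x_0,u_j(l_j/\lambda^q)]_{u_j}$, so every cut line is rescaled by the same factor $\lambda^{-q}$ and none is singled out. The admissibility exception $l_{\sigma(i_0)}<\lambda l_{i_0}$ concerns a single application of $f$, not $f^q$, and iterating $f^{-q}$ fixes the leaf index, so nothing forces the first intersection onto $u_{i_0}$. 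In fact this residual case is the easy one and is exactly what the paper's argument settles: once you have a $\mathcal{W}^{true}$ point interior to the $u_j$ cut line with $j\neq i_0$, push it forward by $f$. For $i\neq i_0$ the restriction of $f$ to the $u_i$ cut line is a bijection onto the $u_{\sigma(i)}$ cut line carrying interior to interior, and iterating reaches $u_{i_0}$ with the point still interior (the scalar $\lambda^k t'$ stays strictly below $l_{i_0}=\lambda^k l_j$ because $t'<l_j$).

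The endpoint case is both incomplete and incorrect. You only discuss $j=i_0$ and omit $j\neq i_0$ with $t=\lambda^q l_j$, i.e.\ $p=u_j(l_j)$. For $j=i_0$, the asserted conclusion $w_q=w$ does not follow from transversality: in that configuration $u_{i_0}(l_{i_0})$ would be an \emph{interior} point of $w_q$ (whose terminal endpoint is $u_{i_0}(l_{i_0}/\lambda^q)$) while it is the \emph{terminal} endpoint of $w$, so the two segments are different, and in any case they could emanate from $u_{i_0}(l_{i_0})$ on opposite sides of the stable leaf. The closing suggestion to ``replace the starting separatrix with a suitable $f^q$-iterate of $w$'' is not a defined step and does not visibly reduce the situation to the earlier case. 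What is actually needed here is a separate argument producing a $\mathcal{W}^{true}$ endpoint in the interior of some cut line (after which the pushforward finishes); as written the degenerate cases are not closed off, so the proof has a genuine gap.
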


\begin{proof} The set of singularities is non empty and the number of separatrices is larger than one, so there is at least a cut line $i$ so that:
 $\mathcal{W}^{true} \cap [ x_0, u_{i} (l_{i}) ]_{u_{i}} \neq \emptyset$.
If the index $i \neq i_0$ then the $f$-image of the cut line  along $u_i$ is the cut line along $u_{\sigma (i)}$, and $f(\mathcal{W}^{true}) \subset \mathcal{W}^{true}$, so $u_{\sigma (i)}$ also contains a point of $\mathcal{W}^{true}$. Iterating this argument shows that the cut line along $u_{i_0}$ contains a point of $\mathcal{W}^{true}$.
\end{proof}
This lemma shows that  a given SC-representative $\mathcal{E}$ always admits a SC folding. Since the cut line 
 is totally ordered there is a first point $p_0$ of $\mathcal{W}^{true}$ along
$[ x_0, u_{i_0} (l_{i_0}) ]_{u_{i_0}}$ starting from $u_{i_0} (l_{i_0}) $ and we denote $l_0 = \mu^s [p_0,  u_{i_0} (l_{i_0}) ]_{u_{i_0}} > 0$.

\begin{defn} 
The SC-folding of length $l_0$ is called an {\em elementary SC-folding}.
\end{defn}

By definition, $l_0$ is the smallest possible length for which a SC-folding can be applied to $\mathcal{E}$. Observe  that $l_0$ is a $\mu^s$-length of a maximal unstable segment in $S - \left(\mathcal{L}\cup \mathcal{W}^{true}\right)$ therefore it is larger than the minimal edge length: $l_0 \geqslant min_{e \in Edge(\Gamma)} l(e) $, where the minimum is taken among edges of positive length, ie among $H$-edges.

\begin{proposition}
Let $\mathcal{E}$ be a SC-representative relative to $u_{i_0}$ and $l_0$ the length of the elementary SC-folding at $\mathcal{E}$. 
Then no point of $\mathcal{W}^{true}$ belong to the segments \\
$ \big{[}  u_{\sigma^{-p}(i_0)} (\frac{1}{\lambda^p}( l_{i_0} - l_0 ) ), 
u_{\sigma^{-p}(i_0)} (\frac{1}{\lambda^p}( l_{i_0}) ) \big{]}$ for
 $p = 1, ..., q-1$.
\end{proposition}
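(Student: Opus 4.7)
The strategy is a direct proof by contradiction, combining the forward invariance $f(\mathcal{W}^{true}) \subset \mathcal{W}^{true}$ (recalled in the proof of Lemma 4.3) with the minimality property that defines $l_0$.

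Fix $p \in \{1, \dots, q-1\}$. Applying admissibility condition (ii) of Definition 3.1 iteratively to the indices $\sigma^{-1}(i_0), \dots, \sigma^{-p+1}(i_0)$, all distinct from $i_0$, yields the scaling $l_{\sigma^{-p}(i_0)} = l_{i_0}/\lambda^p$. Since $f$ expands $\mu^s$-lengths along unstable leaves by the factor $\lambda$ and maps $u_{\sigma^{-p}(i_0)}$ onto $u_{\sigma^{-p+1}(i_0)}$, the iterate $f^p$ sends the segment $I_p := [u_{\sigma^{-p}(i_0)}(\frac{l_{i_0}-l_0}{\lambda^p}), u_{\sigma^{-p}(i_0)}(\frac{l_{i_0}}{\lambda^p})]$ bijectively onto $J := [u_{i_0}(l_{i_0}-l_0), u_{i_0}(l_{i_0})]$, multiplying $\mu^s$-lengths by $\lambda^p$.

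Now suppose, for contradiction, that some $y \in \mathcal{W}^{true}$ lies in the interior of $I_p$. Iterating the inclusion $f(\mathcal{W}^{true}) \subset \mathcal{W}^{true}$ a total of $p$ times yields $f^p(y) \in \mathcal{W}^{true}$, while the scaling above places $f^p(y)$ in the interior of $J$. On the other hand, $l_0$ is defined as the $\mu^s$-distance from $u_{i_0}(l_{i_0})$ to the first (i.e.\ nearest) point $p_0 = u_{i_0}(l_{i_0}-l_0)$ of $\mathcal{W}^{true}$ strictly inside the cut line along $u_{i_0}$; this forces the open interval $(u_{i_0}(l_{i_0}-l_0), u_{i_0}(l_{i_0}))$ to be disjoint from $\mathcal{W}^{true}$, contradicting the location of $f^p(y)$.

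The only delicate point, which I expect to be the minor obstacle, is the treatment of the endpoints of $I_p$. The right endpoint $u_{\sigma^{-p}(i_0)}(l_{\sigma^{-p}(i_0)})$ is sent by $f^p$ to $u_{i_0}(l_{i_0})$, which lies in $\mathcal{W}^{true}$ by admissibility condition (iii), so the argument above does not exclude the endpoints themselves from $\mathcal{W}^{true}$. The statement should therefore be understood as asserting the absence of points of $\mathcal{W}^{true}$ in the interior of each $I_p$; this is precisely what is required to guarantee that the SC-folding of length $l_0$ remains admissible and that a new elementary SC-folding can be performed on the resulting SC-representative.
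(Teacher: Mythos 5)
Your proof is essentially the same argument the paper is gesturing at: the paper disposes of this proposition in one line by referring to the proof of Lemma~4.3 (which is precisely the forward-iteration-of-$f$ argument you use) and to the definition of $l_0$ (the minimality you invoke). You have correctly identified and assembled the two ingredients: the forward invariance $f(\mathcal{W}^{true})\subset\mathcal{W}^{true}$, and the fact that $l_0$ is, by definition, the $\mu^s$-distance from $u_{i_0}(l_{i_0})$ back to the \emph{first} point of $\mathcal{W}^{true}$ on the cut line along $u_{i_0}$. The scaling computation $l_{\sigma^{-p}(i_0)} = l_{i_0}/\lambda^p$ is correct (there is an off-by-one slip in the list of indices to which condition (ii) is applied --- it should be $\sigma^{-1}(i_0),\ldots,\sigma^{-p}(i_0)$ --- but this does not affect the argument), and so is the observation that $f^p$ carries $I_p$ isometrically-up-to-factor-$\lambda^p$ onto $J$.

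Your remark about the endpoints touches a genuine imprecision in the paper's formulation. The forward-iteration argument only excludes points of $\mathcal{W}^{true}$ from the \emph{open} segment $\mathrm{int}(I_p)$: both endpoints of $I_p$ are mapped by $f^p$ to points ($p_0$ and $u_{i_0}(l_{i_0})$ respectively) that \emph{do} lie in $\mathcal{W}^{true}$, so this mechanism alone cannot rule them out. (You mention only the right endpoint, but the same remark applies to the left endpoint $u_{\sigma^{-p}(i_0)}\bigl(\tfrac{1}{\lambda^p}(l_{i_0}-l_0)\bigr)$, whose image under $f^p$ is $p_0\in\mathcal{W}^{true}$.) The paper writes the intervals as closed but its one-line justification proves only the interior version, exactly as you do. Your interpretation --- that the interior version is what the proposition is for and what gets used downstream --- is the reasonable reading, and identifying this mismatch rather than papering over it is a point in your favor.
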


\begin{proof} This follows from the proof of lemma 4.3 and the definition of $l_0$. \end{proof}

\subsubsection{Action of $[f]$ on the set of SC-representatives. }

We defined in 2.1.3 a natural action of the mapping class group on the set of efficient representatives of $[f]$ given by 
$g^*\mathcal{E} = (\Gamma, \Psi, g\circ h_{\Gamma} )$, for  $[g] \in Mod(S_X)$. $g^*\mathcal{E}$  is an efficient representative of $ [g\circ f \circ g^{-1}]$. When $[g]$ belongs to the centraliser of $[f]$ then $g^*\mathcal{E}$ is another efficient representative of $[f]$, this is in particular the case when $[g] = [f^k], k \in \mathbb{Z}$.  By definition of the combinatorial equivalence in 2.1.3 this provides an infinite sequence of efficient representatives that are all combinatorially equivalent.
In the particular case of SC-representatives we have, by lemma 3.4 and proposition 3.5, a bijection between SC-representatives and admissible cut lengths. The $[f^k]$-action is described in term of cut lengths as follows:

\begin{lem}
Let $\mathcal{E} = (\Gamma, \Psi, h_{\Gamma} )$ be a SC-representative of $[f]$, relative to the leaf $u_{i_0}$ of admissible cut length $(l_1, ...,l_m)$.
Then for all $k \in \mathbb{Z}$, $(\lambda^k  l_1, ...,\lambda^k  l_m)$ is admissible, with corresponding SC-representative $(f^k)^*\mathcal{E}$, relative to $u_{\sigma^k (i_0)}$.
\end{lem}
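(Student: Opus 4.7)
The strategy is to combine the combinatorial invariance of the SC conditions under the $Mod(S_X)$-action with the bijection between SC-representatives and admissible cut length vectors furnished by Lemma 3.4 and Proposition 3.5. The plan is to first check that $(f^k)^* \mathcal{E}$ is again an SC-representative of $[f]$, then read off its cut length vector, and finally verify the three conditions of Definition 3.1.

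For the first point, the triple $(f^k)^* \mathcal{E} = (\Gamma, \Psi, f^k \circ h_{\Gamma})$ represents $[f^k \circ f \circ f^{-k}] = [f]$ because $[f^k]$ centralises $[f]$. The conditions (G1)--(G5) together with (SC1) and (SC2) depend only on the pair $(\Gamma, \Psi)$ and its combinatorial gate/peripheral structure, and not on the embedding; they are therefore preserved when $h_{\Gamma}$ is replaced by $f^k \circ h_{\Gamma}$. Hence $(f^k)^* \mathcal{E}$ is a SC-representative of $[f]$, and by Proposition 3.5 it carries a uniquely determined admissible cut length vector.

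To identify this vector I would trace through the maps $u_{\mathcal{E}}$ and $\mathcal{L}_{\mathcal{E}}$ of Lemma 2.2. The surface $\mathcal{R}^{\mathcal{E}}$, its boundary Nielsen paths and its cusps are combinatorial invariants of $(\Gamma, \Psi)$; only the identification $\mathcal{IN}\colon \mathcal{R}^{\mathcal{E}} \to S$ is affected by the change of embedding, and it is post-composed with $f^k$. An illegal turn $t$ previously labelled by the separatrix $u_i$ is therefore now labelled by $f^k(u_i) = u_{\sigma^k(i)}$, and the cut-line segment in $S$ associated to $t$ is the $f^k$-image of the old one. Since $f$ stretches arcs along unstable separatrices (measured by $\mu^s$) by the factor $\lambda$, each cut length is multiplied by $\lambda^k$. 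Under the indexing of the orbit starting at the new central separatrix $u_{\sigma^k(i_0)}$ this gives the vector $(\lambda^k l_1, \ldots, \lambda^k l_m)$.

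Admissibility is then verified directly. Condition (i) holds because $\sigma^k$ permutes the orbit $\mathcal{O}$; condition (ii) reduces, after the relabelling $j = \sigma^{-k}(i)$, to the original identity $l_{\sigma(j)} = \lambda l_j$ valid for $j \ne i_0$; and condition (iii) asks that $u_{\sigma^k(i_0)}(\lambda^k l_{i_0}) \in \mathcal{W}^{true}$ for the new cut lines, which equals $f^k(u_{i_0}(l_{i_0}))$ and lies in $f^k\bigl(\mathcal{W}^{true}(l_1,\ldots,l_m)\bigr) \subset \mathcal{W}^{true}(\lambda^k l_1,\ldots,\lambda^k l_m)$, since $f$ permutes true singularities and stable separatrices and sends $\mathcal{L}(l_1,\ldots,l_m)$ onto the new set of cut lines. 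The main subtlety, I expect, is pinning down the $\lambda^k$ rescaling of cut lengths precisely: intrinsically $\mathcal{R}^{\mathcal{E}}$ and its Perron--Frobenius metric do not change, so the factor $\lambda^k$ appears only through the adjustment of $\mathcal{IN}$ forced by the requirement that the fixed measured foliations $(\mathcal{F}^s, \mu^s)$ and $(\mathcal{F}^u, \mu^u)$ on $S$ remain the images of those on $\mathcal{R}^{\mathcal{E}}$ after post-composition with $f^k$.
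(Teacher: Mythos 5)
Your proposal is correct and follows essentially the same route as the paper: it rests on the observation that $(\Gamma,\Psi)$ and hence all combinatorial SC-data are unchanged by replacing $h_{\Gamma}$ with $f^k\circ h_{\Gamma}$, while $f^k$ rescales $\mu^s$-lengths of cut lines and Nielsen paths by $\lambda^k$ and carries each separatrix $u_i$ to $u_{\sigma^k(i)}$. The paper proves this in three lines by the same geometric observation; you simply make the admissibility checks and the role of the maps $u_{\mathcal{E}}$, $\mathcal{L}_{\mathcal{E}}$ more explicit.
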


\begin{proof} The fact that $(\lambda^k  l_1, ...,\lambda^k  l_m)$ is admissible
is immediate by iterating $[f]$ on the surface.  The $\mu^s$-length of a rectangle $f (R_i)$  is $\lambda$ times the $\mu^s$-length of $R_i$ for all rectangles. The boundary periodic points are the same and the length of the periodic Nielsen paths have all been multiplied by $\lambda$. \end{proof}

\subsection{Cycles in the set of SC-representatives.}  

In this section we prove the main technical result that SC-representatives of $[f]$ are organised in cycles whose structure will be studied in the next sections. We consider sequences of elementary SC-foldings. The first observation is that if $\mathcal{E'}$ is obtained from $\mathcal{E}$ by a sequence of elementary SC-foldings then the total length is strictly decreasing:
$l (\mathcal{E'}) < l (\mathcal{E})$ by proposition 4.2. The next lemma is necessary to prove that the total length converges to zero when an infinite sequence of elementary SC-foldings is applied.

\begin{lem}
Let $\mathcal{E}_1$ and $\mathcal{E}_2$ be two SC-representatives relative to the same separatrix, say $u_1$, and $q$ the order of the permutation $\sigma$. Assume that $l (\mathcal{E}_2 ) < l (\mathcal{E}_1 )$ and let
 $\alpha$ be the positive integer so that:
$l (\mathcal{E}_2 ) \in \big{[ } \frac{1}{\lambda^{\alpha  q}}  l (\mathcal{E}_1 ),  \frac{1}{\lambda^{(\alpha -1)  q}} .  
l (\mathcal{E}_1 ) \big{[} $.  Then the minimal edge length of $\mathcal{E}_2$ (of positive length) is at least 
$\frac{1}{\lambda^{(\alpha +1) . q}}$ times the minimal edge length of 
 $ \mathcal{E}_1$ (of positive length).
\end{lem}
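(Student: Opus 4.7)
The plan is to use the $[f^q]$-action from Lemma 4.6 to compare $\mathcal{E}_2$ with a suitably chosen iterate of $\mathcal{E}_1$. Write $A_i = l_1(\mathcal{E}_i)$ for $i = 1, 2$ and $m_1 = l_{min}(\mathcal{E}_1)$. Since both representatives are relative to $u_1$, Observation~1 gives $l(\mathcal{E}) = l_1(\mathcal{E})(1 + \lambda^{-1} + \cdots + \lambda^{-(q-1)})$ in each case, so the hypothesis translates to $A_1/\lambda^{\alpha q} \leq A_2 < A_1/\lambda^{(\alpha-1)q}$. Form $\tilde{\mathcal{E}} := (f^{-(\alpha+1)q})^*\mathcal{E}_1$; by Lemma 4.6 this is a SC-representative relative to $u_1$ with $l_1(\tilde{\mathcal{E}}) = A_1/\lambda^{(\alpha+1)q}$ and $l_{min}(\tilde{\mathcal{E}}) = m_1/\lambda^{(\alpha+1)q}$, so the claim of the lemma reduces to the inequality $l_{min}(\mathcal{E}_2) \geq l_{min}(\tilde{\mathcal{E}})$.

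The main observation is that $l_1(\tilde{\mathcal{E}}) \leq A_2/\lambda^q < A_2$, so the cut lines of $\tilde{\mathcal{E}}$ are strictly shorter (by at least a factor $\lambda^q$) than those of $\mathcal{E}_2$; consequently $\mathcal{L}(\tilde{\mathcal{E}}) \subsetneq \mathcal{L}(\mathcal{E}_2)$ and, conversely, $\mathcal{W}(\tilde{\mathcal{E}}) \supseteq \mathcal{W}(\mathcal{E}_2)$, because the stable arcs in $\mathcal{W}(\tilde{\mathcal{E}})$ reach the smaller $\mathcal{L}(\tilde{\mathcal{E}})$ later than they reach $\mathcal{L}(\mathcal{E}_2)$. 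I would then pick a rectangle $R$ of the $\mathcal{E}_2$-partition achieving the minimum $l_{min}(\mathcal{E}_2)$, take a maximal unstable traversing segment $U \subset R$ (so $\mu^s(U) = l_{min}(\mathcal{E}_2)$, with endpoints on $\mathcal{L}(\mathcal{E}_2)$), and locate within it a full traversing segment of some rectangle $R'$ of the $\tilde{\mathcal{E}}$-partition. This would give $l_{min}(\mathcal{E}_2) = \mu^s(U) \geq L(R') \geq l_{min}(\tilde{\mathcal{E}})$.

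The extra factor $\lambda^q$ of slack (beyond the naive $\lambda^{\alpha q}$ demanded by the hypothesis) is precisely what makes such an $R'$ exist. It provides enough ``room'' on the cut lines of $\mathcal{E}_2$ beyond those of $\tilde{\mathcal{E}}$, which combined with Proposition 4.5 applied iteratively along the chain $\mathcal{E}_1, (f^{-q})^*\mathcal{E}_1, \ldots, \tilde{\mathcal{E}}$ --- each application controlling the positions of true singularities near the endpoints of the relevant cut lines --- lets one slide the endpoints of $U$ onto $\mathcal{L}(\tilde{\mathcal{E}}) \cup \mathcal{W}(\tilde{\mathcal{E}})$ without shortening. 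I expect the main obstacle to be this geometric extraction: the partitions of $\mathcal{E}_2$ and $\tilde{\mathcal{E}}$ are not nested (the cut line inclusion and the stable arc inclusion go in opposite directions), so a simple refinement argument fails and one must combine $\mathcal{L}(\tilde{\mathcal{E}}) \subset \mathcal{L}(\mathcal{E}_2)$ with Proposition 4.5 to transfer the minimal-edge bound across this non-nested comparison.
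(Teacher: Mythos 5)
Your high-level strategy — compare $\mathcal{E}_2$ to $\tilde{\mathcal{E}}=(f^{-(\alpha+1)q})^*\mathcal{E}_1$ and use the cut-line nesting to relate the two rectangle decompositions — matches the paper's, and you correctly identify that the extra factor $\lambda^q$ of slack beyond the naive $\lambda^{\alpha q}$ is essential. But the key step, the inclusion $\mathcal{W}(\mathcal{E}_2)\subseteq\mathcal{W}(\tilde{\mathcal{E}})$, is asserted with a justification that only covers part of $\mathcal{W}(\mathcal{E}_2)$ and in fact is false as stated. Your reasoning (``stable arcs in $\mathcal{W}(\tilde{\mathcal{E}})$ reach the smaller $\mathcal{L}(\tilde{\mathcal{E}})$ later than they reach $\mathcal{L}(\mathcal{E}_2)$'') applies to stable arcs emanating from a \emph{shared} starting point, namely a true singularity: for those, shrinking the target $\mathcal{L}$ lengthens the arc, giving $\mathcal{W}^{true}(\mathcal{E}_2)\subseteq\mathcal{W}^{true}(\tilde{\mathcal{E}})$. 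It says nothing about the arcs in $\mathcal{W}(\mathcal{E}_2)$ that emanate from the \emph{artificial} singularities of $\mathcal{E}_2$, the points $u_i(l_i(\mathcal{E}_2))$. Those points lie beyond the cut lines of $\tilde{\mathcal{E}}$, hence are interior points of rectangles of $\tilde{\mathcal{E}}$, and the stable arcs starting there are generically \emph{not} contained in $\mathcal{W}(\tilde{\mathcal{E}})$. So the claimed inclusion fails, and with it the claim that a maximal $\mathcal{E}_2$-unstable segment fully traverses a rectangle of $\tilde{\mathcal{E}}$.

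This is exactly the gap the paper closes with Observation~2 ($f^q$ sends every artificial singularity of an SC-representative into $\mathcal{W}^{true}$ of that same representative) together with the factor-$\lambda^q$ slack: one first shows $\mathcal{W}^{true}(\mathcal{E}_2)\subseteq\mathcal{W}^{true}((f^{-q})^*\mathcal{E}_1)$ from the cut-length comparison, then uses Observation~2 to get $f^q(\mathcal{W}(\mathcal{E}_2))\subseteq\mathcal{W}^{true}(\mathcal{E}_2)\subseteq\mathcal{W}^{true}((f^{-q})^*\mathcal{E}_1)$, i.e.\ $\mathcal{W}(\mathcal{E}_2)\subseteq\mathcal{W}^{true}((f^{-2q})^*\mathcal{E}_1)$, which is the inclusion you would need. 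Your proposal never invokes Observation~2, and the last paragraph (``lets one slide the endpoints of $U$ onto $\mathcal{L}(\tilde{\mathcal{E}})\cup\mathcal{W}(\tilde{\mathcal{E}})$ without shortening'') is a description of the obstacle rather than a proof. Replacing the unjustified $\mathcal{W}$-inclusion by the Observation-2 argument is the missing content.
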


\begin{proof}
We denote as above $\mathcal{L} ( \mathcal{E}_i)$, 
$\mathcal{W} ( \mathcal{E}_i)$ and $\mathcal{W}^{true} ( \mathcal{E}_i)$ for $i =1, 2$ the cut lines and the stable segments of the two SC-representatives.

{\em Case $\alpha = 1$}: First we claim that 
$\mathcal{W} ( \mathcal{E}_2)  \subset \mathcal{W}^{true} 
({f^{-2q}}^{* } \mathcal{E}_1)$.\\
The cut lengths of ${f^{-q}}^{* } \mathcal{E}_1$ are $\frac{1}{\lambda^q}$ times those of $ \mathcal{E}_1$ and, by assumption: $l(  \mathcal{E}_2) \geqslant \frac{1}{\lambda^q} l( \mathcal{E}_1)$. Therefore the cut lengths of ${f^{-q}}^{* } \mathcal{E}_1$ are smaller than those of $\mathcal{E}_2$. This implies that 
$\mathcal{W}^{true} ( \mathcal{E}_2)  \subset \mathcal{W}^{true} 
({f^{-q}}^{* } \mathcal{E}_1)$ and the $f^q$ images of the artificial singularities of $ \mathcal{E}_2$ belongs to 
$\mathcal{W}^{true} ( \mathcal{E}_2) $ by observation 2, thus 
$ f^q (\mathcal{W} ( \mathcal{E}_2) ) \subset \mathcal{W}^{true} 
({f^{-q}}^{* } \mathcal{E}_1)$ which completes the proof of the claim.

An edge $e$ of $\Gamma_2$ with positive length defines a rectangle in
$S - \left(\mathcal{L} ( \mathcal{E}_2)\bigcup \mathcal{W} ( \mathcal{E}_2)\right)$, its length is the $\mu^s$-length of a maximal unstable segment in the complement of $\mathcal{W} ( \mathcal{E}_2)$.
The above claim implies that this unstable segment is longer than a maximal unstable segment in the complement of 
$\mathcal{W}^{true} ({f^{-2q}}^{* } \mathcal{E}_1)$ and thus:
$l (e) \geqslant min_{e' \in \Gamma_1} l \big{(} {f^{-2q}}^{* }(e') \big{)}
= \frac{1}{\lambda^{2q}} min_{e' \in \Gamma_1} l (e ')$ . This completes the case $\alpha = 1$.

{\em Case $\alpha > 1$.} The preceding argument is applied to the representatives $\mathcal{E '}_1 = {f^{(\alpha - 1)q}}^{* } \mathcal{E}_1$
and $\mathcal{E }_2$.
\end{proof}

\begin{lem}
Let $\mathcal{E} = \mathcal{E}_0$ be a SC-representative of $[f]$ and 
$\mathcal{E}_1 ,   \mathcal{E}_2 , ...$ the sequence of SC-representatives obtained from $\mathcal{E}$ by successive elementary SC-foldings. 
The sequence  $l ( \mathcal{E}_i )$ is strictly decreasing and converges to zero.
\end{lem}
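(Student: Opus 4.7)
The strict decrease is immediate from Proposition 4.2: each elementary SC-folding removes a positive quantity $(1 + \tfrac{1}{\lambda} + \dots + \tfrac{1}{\lambda^{q-1}})\, l_0^{(i)}$ from the total length, where $l_0^{(i)} > 0$ is the length of the $i$-th elementary SC-folding. The substance of the lemma is therefore the convergence $l(\mathcal{E}_i) \to 0$, which I will prove by contradiction.

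Assume that $l(\mathcal{E}_i) \to L_\infty$ with $L_\infty > 0$. Then the successive differences $l(\mathcal{E}_{i-1}) - l(\mathcal{E}_i)$ tend to zero, so $l_0^{(i)} \to 0$. By the observation following Definition 4.4, each $l_0^{(i)}$ is bounded below by the minimum among the positive edge lengths of $\mathcal{E}_i$. Consequently, the minimum positive edge length of $\mathcal{E}_i$ tends to $0$ as $i \to \infty$. My strategy is to contradict this via Lemma 4.6 applied to the pair $(\mathcal{E}_0, \mathcal{E}_i)$.

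Before invoking Lemma 4.6 I need to check its hypothesis, namely that all $\mathcal{E}_i$ are SC-representatives relative to the same separatrix. The SC-folding formulas in Definition 4.1 multiply every cut length $l_{\sigma^{-p}(i_0)}$ by a common factor $(l_{i_0} - l)/l_{i_0}$; in particular the "break" in the ratio pattern $l'_{\sigma(j)}/l'_j = \lambda$ still occurs only at the index $j = i_0$. Hence admissibility of the folded cut length vector is always witnessed by the same exceptional index $i_0$, and by the bijection of Lemma 3.4 and Proposition 3.5 every $\mathcal{E}_i$ is relative to the same separatrix $u_{i_0}$.

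I then apply Lemma 4.6 with the pair $(\mathcal{E}_0, \mathcal{E}_i)$: the relevant integer $\alpha_i$ satisfies $\lambda^{(\alpha_i - 1) q} < l(\mathcal{E}_0)/l(\mathcal{E}_i) \leq \lambda^{\alpha_i q}$. Since $l(\mathcal{E}_i) \geq L_\infty > 0$ for all $i$, the ratio $l(\mathcal{E}_0)/l(\mathcal{E}_i)$ is uniformly bounded, so there is a fixed $\alpha_0$ with $\alpha_i \leq \alpha_0$ for all $i$. Lemma 4.6 then yields
\[
\min_{e \in E(\Gamma_i)^+}\! l(e) \;\geq\; \lambda^{-(\alpha_0 + 1)q} \min_{e \in E(\Gamma_0)^+}\! l(e),
\]
a strictly positive constant independent of $i$, contradicting the fact that this minimum tends to $0$. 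Thus $L_\infty = 0$. The main subtlety is the preliminary step of checking that the exceptional index $i_0$ is preserved under elementary SC-foldings, since without this the Lemma 4.6 bound cannot be invoked along the whole sequence.
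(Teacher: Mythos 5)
Your proposal is correct and takes essentially the same approach as the paper's proof: it combines Proposition~4.2 (strict decrease), the observation after Definition~4.4 that the elementary folding length is bounded below by the minimal positive edge length, and the minimal-edge-length bound of Lemma~4.7 (which you cite as ``Lemma~4.6'' -- an off-by-one in numbering, but clearly the right statement). The only differences are cosmetic -- you phrase the argument as a proof by contradiction and include an explicit (and correct) verification that elementary SC-foldings preserve the exceptional index $i_0$, a hypothesis of Lemma~4.7 that the paper only records later, in the proof of Theorem~4.9.
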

\begin{proof} The strict decreasing property is obvious from proposition 4.2.
Definition 4.4 and the observation that follows implies that the total length 
$l ( \mathcal{E}_i )$ drops by at least the minimal edge length of $\Gamma_i$.
Let $l_{min}^0 = min_{e \in \Gamma_0} l(e)$, lemma 4.7 implies that if
 $l ( \mathcal{E}_i ) \geqslant \frac{1}{\lambda^{\alpha q}} . 
l (\mathcal{E}_0 )$ then the SC-elementary folding applied to 
$\mathcal{E}_i $ makes the total length decreases by at least 
$ \frac{1}{\lambda^{(\alpha +1) q}} l_{min}^0$ and thus 
$l ( \mathcal{E}_i ) \rightarrow 0$.
\end{proof}

\begin{thm}[cycles exists]
\label{th:4.9}
Let $\mathcal{E} = \mathcal{E}_0$ be a SC-representative of $[f]$ and 
$\mathcal{E}_1 ,   \mathcal{E}_2 , ...$ the sequence of SC-representatives obtained from $\mathcal{E}$ by the successive sequence of elementary SC-foldings.  There exists a SC-representative $\mathcal{E}_p$ in this sequence so that: 
$l ( \mathcal{E}_p)  =  \frac{1}{\lambda^{ q}}  l (\mathcal{E}_0 )$ and thus
$\mathcal{E}_p  =  {f^{-q}}^{*} \mathcal{E}_0 $.
\end{thm}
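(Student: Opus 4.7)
The plan is to parametrise SC-representatives relative to a fixed separatrix by their total length (Observation 1) and exploit the minimality property built into the definition of an elementary SC-folding. First I verify that every $\mathcal{E}_i$ in the elementary folding sequence is a SC-representative relative to the same separatrix $u_{i_0}$: a direct computation using Definition 4.1 shows that the exception index $i_0$ in Definition 3.1(ii) is preserved by SC-folding. By Lemma 4.6 applied with $k=-q$, and using $\sigma^q=\mathrm{id}$, the efficient representative $(f^{-q})^*\mathcal{E}_0$ is itself a SC-representative relative to $u_{\sigma^{-q}(i_0)}=u_{i_0}$, with cut length $\lambda^{-q}l_{i_0}^{(0)}$ along $u_{i_0}$ and hence total length $\lambda^{-q}l(\mathcal{E}_0)$. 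By Observation 1 it then suffices to produce some $\mathcal{E}_p$ with $l(\mathcal{E}_p)=\lambda^{-q}l(\mathcal{E}_0)$.

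By Lemma 4.8 the total lengths $l(\mathcal{E}_i)$ strictly decrease to zero, so there is a smallest $p$ with $l(\mathcal{E}_p)\leqslant\lambda^{-q}l(\mathcal{E}_0)$; I shall rule out the strict case $l(\mathcal{E}_p)<\lambda^{-q}l(\mathcal{E}_0)<l(\mathcal{E}_{p-1})$ by contradiction. Set $l':=l_{i_0}^{(p-1)}-\lambda^{-q}l_{i_0}^{(0)}$. A short calculation with the admissibility relations $l_{\sigma^{-k}(i_0)}=\lambda^{-k}l_{i_0}$ shows that the SC-folding of $\mathcal{E}_{p-1}$ of length $l'$ (in the sense of Definition 4.1) produces precisely the cut-length vector of $(f^{-q})^*\mathcal{E}_0$. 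Moreover $l'<l_0^{(p-1)}=l_{i_0}^{(p-1)}-l_{i_0}^{(p)}$, since $l_{i_0}^{(p)}<\lambda^{-q}l_{i_0}^{(0)}$. Thus, provided this SC-folding is \emph{valid}, we obtain a SC-folding from $\mathcal{E}_{p-1}$ of length strictly smaller than the elementary length $l_0^{(p-1)}$, contradicting the minimality built into Definition 4.4.

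The main obstacle is therefore to verify validity, which by Definition 4.1 amounts to establishing that $u_{i_0}(\lambda^{-q}l_{i_0}^{(0)})\in\mathcal{W}^{true}(\mathcal{E}_{p-1})$. The approach I would take is to use the $f^{-q}$-invariance of the foliations together with the inclusion $\mathcal{W}^{true}(\mathcal{E}_0)\subset\mathcal{W}^{true}(\mathcal{E}_{p-1})$ from the proof of Proposition 4.2: by admissibility of $\mathcal{E}_0$ there is a stable separatrix segment from a true singularity $s_0$ to $u_{i_0}(l_{i_0}^{(0)})$ avoiding $\mathcal{L}(\mathcal{E}_0)$ in its interior, and its $f^{-q}$-image is a stable separatrix segment from the true singularity $f^{-q}(s_0)$ to $u_{i_0}(\lambda^{-q}l_{i_0}^{(0)})$ avoiding $f^{-q}(\mathcal{L}(\mathcal{E}_0))=\mathcal{L}((f^{-q})^*\mathcal{E}_0)$. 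The technical difficulty is to control the extra arcs $\mathcal{L}(\mathcal{E}_{p-1})\setminus\mathcal{L}((f^{-q})^*\mathcal{E}_0)$ along the other separatrices in the $\sigma$-orbit; I expect this to follow by an inductive argument along the folding chain $\mathcal{E}_0\to\mathcal{E}_1\to\cdots\to\mathcal{E}_{p-1}$, using Proposition 4.5 at each step to guarantee that the portions of cut lines trimmed off carry no $\mathcal{W}^{true}$ points, so that the pulled-back stable separatrix segment cannot acquire a first intersection with $\mathcal{L}(\mathcal{E}_{p-1})$ in the interior before reaching $u_{i_0}(\lambda^{-q}l_{i_0}^{(0)})$.
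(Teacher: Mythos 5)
Your argument reproduces the paper's proof almost step by step: parametrise SC-representatives relative to $u_{i_0}$ by the single length $l_{i_0}$ (Observation 1), identify $(f^{-q})^*\mathcal{E}_0$ as the target (Lemma 4.6), use Lemma 4.8 to locate $p$ with $l(\mathcal{E}_p)\leqslant\lambda^{-q}l(\mathcal{E}_0)<l(\mathcal{E}_{p-1})$, and rule out the strict case by exhibiting a purported SC-folding of $\mathcal{E}_{p-1}$ whose length is strictly smaller than the elementary folding length $l_0^{(p-1)}$, contradicting the minimality built into Proposition 4.5. So the approach is essentially the same as the paper's.

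One clarification on the point you single out at the end. You are right that the crux is to verify that $u_{i_0}(\lambda^{-q}l_{i_0}^{(0)})\in\mathcal{W}^{true}(\mathcal{E}_{p-1})$, i.e.\ validity of the proposed SC-folding in the sense of Definition 4.1, and not merely admissibility of the resulting cut-length vector; the paper handles this somewhat tersely. However, the paper's intended mechanism does not require the induction along the whole chain $\mathcal{E}_0\to\cdots\to\mathcal{E}_{p-1}$ that you sketch. A single application of Proposition 4.5 at $\mathcal{E}_{p-1}$, together with the very definition of $l_0^{(p-1)}$ as the first $\mathcal{W}^{true}(\mathcal{E}_{p-1})$ point inward along $u_{i_0}$, already shows that the trimmed segments
$[u_{\sigma^{-r}(i_0)}(\lambda^{-r}(l_{i_0}^{p-1}-l)),\ u_{\sigma^{-r}(i_0)}(\lambda^{-r}l_{i_0}^{p-1})]$, $r=0,\dots,q-1$, carry no $\mathcal{W}^{true}(\mathcal{E}_{p-1})$ points in their interiors (and none at all for $r\geqslant1$, the segments being contained in those trimmed by $l_0^{(p-1)}$). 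Admissibility of the cut vector of $(f^{-q})^*\mathcal{E}_0$ exhibits a true stable separatrix segment landing on $u_{i_0}(\lambda^{-q}l_{i_0}^{(0)})$ without first meeting $\mathcal{L}((f^{-q})^*\mathcal{E}_0)$; any earlier crossing with $\mathcal{L}(\mathcal{E}_{p-1})$ would have to lie in the trimmed segments and would itself be a $\mathcal{W}^{true}(\mathcal{E}_{p-1})$ point, which the above forbids. This is the reasoning the paper compresses into the sentence citing Proposition 4.5. Your sketch is not wrong, but the induction is superfluous, and in estimating it to be the hard part you are correctly pointing at the spot where the paper is least explicit.
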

\begin{proof}
Since the sequence $l ( \mathcal{E}_i ) $ is decreasing and converges to zero there is a $p > 1$ so that 
 $l ( \mathcal{E}_{p-1}) > \frac{1}{\lambda^{ q}}  l (\mathcal{E}_0 ) \geqslant l ( \mathcal{E}_{p})$.  We want to prove that 
$\frac{1}{\lambda^{ q}}  l (\mathcal{E}_0 ) =  l ( \mathcal{E}_{p})$. If this is the case then $ (l_1^{p}, ..., l_m^{p}) = \frac{1}{\lambda^{ q}}. (l_1^{0}, ..., l_m^{0})$ and, by lemma 4.6,  $\mathcal{E}_p  =  {f^{-q}}^{*} \mathcal{E}_0 $.

Let us assume by contradiction that the inequality is strict. We denote by
$L_{p-1} = l ( \mathcal{E}_{p-1})$ and
 $D = \frac{1}{\lambda^{ q}}  l (\mathcal{E}_0 )$. By assumption 
$l =  \frac{L_{p-1} - D}{1+ \frac{1}{\lambda} + \cdots + 
\frac{1}{\lambda^{ q-1}}} > 0 $.\\
We assume also that $\mathcal{E}_0$ is a SC-representative relative to the leaf  $u_1$, therefore all the SC-representatives in the sequence are relative to the same leaf, by definition of the elementary SC-folding.
We denote by $ (l_1^{p-1}, ..., l_m^{p-1})$ the admissible cut lengths of 
$ \mathcal{E}_{p-1}$. From observation 1 the cut length along the leaf $u_1$ is $ l_1^{0} =  \frac{ D}{1+ \frac{1}{\lambda} + \cdots + 
\frac{1}{\lambda^{ q-1}}}$ for $\mathcal{E}_0$ and 
$ l_1^{p-1} =  \frac{ L_{p-1}}{1+ \frac{1}{\lambda} + \cdots + 
\frac{1}{\lambda^{ q-1}}}$ for $\mathcal{E}_{p-1}$.
The definition of $l$ gives $ l_1^{0} = l_1^{p-1} - l$ and therefore 
$ l_{\sigma ^{- r}(1)}^{0} = l_{\sigma ^{- r}(1)}^{p-1} - \frac{1}{\lambda^{ r}}l$, for $r = 1, ..., q-1$.
These are admissible cut lengths since ${f^{-q}}^{*} \mathcal{E}_0 $ is a SC-representative. By assumption $l >0$ and $l < l'$, where $l'$ is the length of the elementary SC-folding at $\mathcal{E}_{p-1} $, this is a contradiction with  proposition 4.5.
\end{proof}

\section{Cycles: roots, symmetries and conjugacy.}

Specific properties of the set of SC-representatives are studied in this section.

\subsection{Cycles and the main theorem.}
\begin{thm}[Cycles structure]
\label{th:5.1}
Let $f$ be a pseudo-Anosov homeomorphism fixing $x_0$, let $(u_1, ..., u_m)$ be the set of unstable separatrices at $x_0$ that are permuted by $f$ and let $\sigma$ be the induced permutation of order $q$ of $\{1,\ldots m\}$.\\
(1) There exists a SC-representative $ \mathcal{E}_0 $ relative to $u_1$.\\
(2) The sequence of elementary SC-foldings $ \mathcal{E}_0, \mathcal{E}_1, \cdots $ defines a combinatorial cycle $ \mathcal{C}(u_1)$: 
there exists $p >1 $ such that $ \mathcal{E}_j $ is combinatorially equivalent to 
$ \mathcal{E}_i $ if $ i \equiv j $ modulo $[p]$.\\
(3) Every SC-representative relative to $u_1$ is combinatorially equivalent to a SC-representative in the cycle $ \mathcal{C}(u_1)$.\\
(4) If the separatrix $u_j$ belongs to the $\sigma$-orbit of $u_1$ then each SC-representative in the cycle $ \mathcal{C}(u_j)$ is combinatorially equivalent to a SC-representative in the cycle $ \mathcal{C}(u_1)$.

\end{thm}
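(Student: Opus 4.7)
The plan is to handle the four parts in order, using the bijection between admissible cut length vectors and SC-representatives (Lemma 3.4 and Proposition 3.5), the cycle-existence theorem (Theorem 4.9), and the $(f^{k})^{*}$-action on SC-representatives (Lemma 4.6). For part (1), I choose the orbit $\mathcal{O}$ in the construction of Lemma 3.2 to be the $\sigma$-orbit of $u_1$ and take $i_0 = 1$; Proposition 3.3 then yields a SC-representative $\mathcal{E}_0$ relative to $u_1$.

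For part (2), Theorem 4.9 produces $p \geqslant 1$ with $\mathcal{E}_p = (f^{-q})^{*}\mathcal{E}_0$. The action $(f^{-q})^{*}$ preserves the pair $(\Gamma, \Psi)$ and only alters the embedding by $f^{-q}$, so taking $C = \mathrm{id}$ and $g = f^{-q}$ in the definition of combinatorial equivalence (paragraph 2.1.3) shows that $\mathcal{E}_p$ is combinatorially equivalent to $\mathcal{E}_0$. Since the elementary SC-folding is built from the $f$-invariant foliations, cut lines, and $\mathcal{W}^{true}$, it commutes with $(f^{-q})^{*}$; by induction $\mathcal{E}_{p+j} = (f^{-q})^{*}\mathcal{E}_j$ is combinatorially equivalent to $\mathcal{E}_j$ for every $j$, giving the $p$-periodicity.

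For part (3), let $\mathcal{E}'$ be any SC-representative relative to $u_1$. Applying Theorem 4.9 to $\mathcal{E}'$ yields its own cycle $\mathcal{C}(\mathcal{E}')$; the task is to show that $\mathcal{C}(\mathcal{E}')$ and $\mathcal{C}(u_1)$ share a common combinatorial equivalence class. After normalising by $(f^{qk})^{*}$ (Lemma 4.6), which preserves the combinatorial class, I may assume that the cut length $l'_1$ of $\mathcal{E}'$ along $u_1$ lies in the fundamental interval $(l_1^{0}/\lambda^{q}, l_1^{0}]$. The strategy is to iterate elementary SC-folding on both $\mathcal{E}_0$ and $\mathcal{E}'$ and match the sequences through the bijection of Lemma 3.4 and Proposition 3.5 (equal admissible cut length vectors yield equal SC-representatives). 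The main obstacle is that the elementary SC-folding from $\mathcal{E}_j$ uses $\mathcal{W}^{true}(\mathcal{E}_j)$, which is strictly smaller than $\mathcal{W}^{true}(\mathcal{E}')$ once $\mathcal{E}'$ has shorter cut lengths, so admissibility points could a priori be ``skipped''. I plan to resolve this by combining the minimality of the elementary SC-folding length (Proposition 4.5) with Lemma 4.7's lower bound on minimal edge lengths to force the two sequences of cut lengths to synchronise after finitely many foldings.

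For part (4), write $u_j = \sigma^{s}(u_1)$. Lemma 4.6 shows that $(f^{s})^{*}$ bijects SC-representatives relative to $u_1$ with those relative to $u_j$, and by the argument of (2) each image $(f^{s})^{*}\mathcal{E}$ is combinatorially equivalent to $\mathcal{E}$. Since elementary SC-folding commutes with $(f^{s})^{*}$, the image $(f^{s})^{*}\mathcal{C}(u_1)$ is a cycle of SC-representatives relative to $u_j$, termwise combinatorially equivalent to $\mathcal{C}(u_1)$. Applying (3) to representatives relative to $u_j$, every element of $\mathcal{C}(u_j)$ is combinatorially equivalent to one in $(f^{s})^{*}\mathcal{C}(u_1)$, and hence to one in $\mathcal{C}(u_1)$.
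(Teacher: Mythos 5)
Your treatment of parts (1), (2) and (4) is sound and matches the logic of the paper: (1) via Lemma 3.2 and Proposition 3.3 with the orbit $\mathcal{O}$ chosen as the $\sigma$-orbit of $u_1$; (2) via Theorem~4.9 together with the observation that the elementary SC-folding is $f$-equivariant, hence $\mathcal{E}_{p+j}=({f^{-q}})^{*}\mathcal{E}_j$; and (4) by transporting $\mathcal{C}(u_1)$ with $({f^{s}})^{*}$ and reducing to (3). In fact you supply more justification for the $p$-periodicity in (2) than the paper does, which is a welcome elaboration.

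The weak point is (3). You correctly normalise $\mathcal{E}'$ by $({f^{\alpha q}})^{*}$ so its total length falls into one fundamental interval for the $({f^{-q}})^{*}$-action, and you correctly identify the potential difficulty (that elementary SC-folding uses $\mathcal{W}^{true}$ of the \emph{current} representative, which is a priori smaller than $\mathcal{W}^{true}$ of a representative with smaller cut length). But you leave the resolution as a ``plan,'' and the plan you outline is off-target: Lemma 4.7 is a quantitative length-decay estimate used only to establish convergence in Lemma 4.8; it does not bear on whether an admissible vector is skipped. What actually closes the gap is the contradiction argument in the proof of Theorem~4.9, rephrased as follows. Suppose the (normalised) $\mathcal{E}'$ sits strictly between $\mathcal{E}_i$ and $\mathcal{E}_{i+1}$ in total length. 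Admissibility of $\mathcal{E}'$ forces a true-singularity stable separatrix to reach the endpoint $u_1(l'_1)$ before hitting the shorter cut lines of $\mathcal{E}'$. Follow that separatrix for the \emph{longer} cut lines of $\mathcal{E}_i$: its first intersection with $\mathcal{L}(\mathcal{E}_i)$ lands at a point of $\mathcal{W}^{true}(\mathcal{E}_i)$ lying in a segment $\bigl[u_{\sigma^{-p}(1)}\bigl(\tfrac{1}{\lambda^{p}}(l^{i}_1-l_0)\bigr),\,u_{\sigma^{-p}(1)}\bigl(\tfrac{1}{\lambda^{p}}l^{i}_1\bigr)\bigr]$, which Proposition~4.5 explicitly excludes. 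This is precisely what the paper means by ``the proof of theorem 4.9 implies that $\mathcal{E}$ belongs to this sequence.'' Your proposal does not reach this point, so as written part (3) is incomplete; you should replace the vague synchronisation plan with the Proposition~4.5 contradiction above.
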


\begin{proof} Items (1) and (2) are already proved. \\
(3) Let  $ \mathcal{E}_0, \mathcal{E}_1, \cdots,  \mathcal{E}_p = {f^{-q}}^{*} \mathcal{E}_0 $ denote the combinatorial cycle of theorem \ref{th:4.9}.
If $\mathcal{E}$ is a SC-representative relative to $u_1$ then there is an integer $\alpha$ so that 
$\lambda^{\alpha q} l (\mathcal{E}_0) \geqslant l (\mathcal{E}) >
\lambda^{(\alpha -1) q} l (\mathcal{E}_0)$.
The sequence 
${f^{\alpha q}}^{*}(\mathcal{E}_0), {f^{\alpha q}}^{*}(\mathcal{E}_1), \cdots, 
{f^{\alpha q}}^{*}(\mathcal{E}_p) = {f^{(\alpha -1) q}}^{*}(\mathcal{E}_0)$ represents the same combinatorial cycle.
The proof of theorem \ref{th:4.9} implies that $\mathcal{E}$ belongs to this sequence.\\
(4) Let $ \mathcal{E}_0$ be a SC-representative relative to $u_1$ and 
$ \mathcal{C}(u_1)$ the cycle obtained from $ \mathcal{E}_0$.
Let $ \mathcal{E}$ be a SC-representative relative to $u_j$ with 
$j = \sigma^k (1)$. Then by lemma 4.6 the SC-representative 
$ \mathcal{E '} = {f^{-k}}^{*} \mathcal{E}$ is a SC-representative relative to $u_1$ and the result comes from (3).
\end{proof}

Theorem \ref{th:5.1} describes a very simple structure on the set of SC-representatives of $[f]$ up to combinatorial equivalence: it is a union of cycles, one for each orbit of separatrices.

\subsection{Transition maps.}

The cycles in  theorem \ref{th:5.1} are defined up to combinatorial equivalence. In this paragraph we use the existence of a combinatorial equivalence to extract informations about the mapping class element.
Recall, from lemma 2.2, that the unstable separatrices with non zero cut length are in  one-to-one correspondence with the illegal turns of any SC-representative so the illegal turns are indexed via the name of the corresponding separatrix.

\begin{proposition} [transition map]
Let $\mathcal{E} = (\Gamma, \psi, h_{\Gamma}) $ ( resp. 
$\mathcal{E'} = (\Gamma', \psi ', h_{\Gamma'}) $ ) be a SC-representative relative to the separatrix $u_j$ (resp. $u_i$).
If $\mathcal{E}$ and $\mathcal{E'}$ are combinatorially equivalent under a map $ C: \Gamma \rightarrow \Gamma '$ then this map is unique.
Moreover the map $C$ induces a unique homeomorphism $g: S \rightarrow S$ 
that satisfies:\\
(i)  $g\circ h_{\Gamma} \simeq h_{\Gamma'} \circ C$ .\\
(ii) $g$ leaves invariant the pair of foliations
 $(\mathcal{F}^s , \mu^s) ; (\mathcal{F}^u , \mu^u)$, it rescales $\mu^u$ by a factor $\nu$ and $\mu^s$ by a factor $\nu^{-1}$ and it commutes with $f$.\\
(iii) $g$ maps the separatrix $u_j$ to $u_{i}$ and induces locally a rotation of angle $\frac{j - i}{m}2\pi$. There are two integers $b$ and 
$c \neq 0$ so that  $ g^{dc} = f^{bq}$, where $q$ is the local order of $f$ at $x_0$ and $d$ the local order of $g$ at $x_0$.\\
The homeomorphism $g$ is called the transition map.
\end{proposition}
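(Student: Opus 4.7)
The plan is in three stages: uniqueness of $C$, construction of $g$, and verification of the listed properties. For uniqueness, if $C_1, C_2$ are two such equivalences then $C_0 := C_2^{-1}\circ C_1$ is a combinatorial self-equivalence of $\mathcal{E}$, and it suffices to show $C_0 = \mathrm{Id}$. By (SC1), $\mathcal{E}$ has a unique illegal turn of order one, which $C_0$ must preserve (as the only turn of its order). Since the turn is of order one, the two $H$-edges $a, b$ at this turn satisfy that $\Psi(a)$ and $\Psi(b)$ share a non-trivial common first edge $e$, which is an $H$-edge by (G5); whether $C_0$ fixes or swaps $\{a,b\}$, we deduce $C_0(e)=e$. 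By irreducibility of $M_H$ (G4), every $H$-edge of $\Gamma$ appears as a letter in some iterate $\Psi^k(e)$; since $C_0\circ\Psi^k = \Psi^k\circ C_0$, the edge path $\Psi^k(e)$ is preserved letter-by-letter, so every $H$-edge is fixed. Condition (G3) and equivariance then force $C_0 = \mathrm{Id}$ on the peripheral and preperipheral edges as well.

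For the construction of $g$, each SC-representative yields via Section 2.2 a bifoliated surface $\mathcal{R}^{[\mathcal{E}]}$ with rectangle partition, Perron-Frobenius data $(L, W)$, and a quotient map $\mathcal{IN}:\mathcal{R}^{[\mathcal{E}]}\to S$. Since $C$ is a simplicial isomorphism intertwining $\Psi$ with $\Psi'$, it carries the incidence matrix of $\mathcal{E}$ to that of $\mathcal{E}'$; by uniqueness of the Perron-Frobenius eigenvectors, $C$ matches $L$ with $\nu^{-1} L'\circ C$ and $W$ with $\nu W'\circ C$ for a unique $\nu > 0$. This yields a rectangle-by-rectangle affine homeomorphism $G:\mathcal{R}^{[\mathcal{E}]}\to\mathcal{R}^{[\mathcal{E}']}$ preserving both foliations and rescaling $\mu^u$ by $\nu$, $\mu^s$ by $\nu^{-1}$. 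Since $C$ pairs illegal turns and scales the length functions $\mathcal{L}_{\mathcal{E}}, \mathcal{L}_{\mathcal{E}'}$ of lemma 2.2 by $\nu$, the map $G$ sends boundary Nielsen paths to boundary Nielsen paths; hence it descends through $\mathcal{IN}, \mathcal{IN}'$ to a well-defined homeomorphism $g:S\to S$.

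Properties (i)--(iii) now follow. Property (i) is the descent of $h'_{\Gamma'}\circ C = g\circ h_{\Gamma}$ through the retractions of Section 2.2. For (ii), the relation $\Psi'\circ C = C\circ\Psi$ descends to $f\circ g \simeq g\circ f$, while the foliation preservation with rescaling factor $\nu$ is built into $G$. For (iii), under the bijection $u_{\mathcal{E}}$ of lemma 2.2 the distinguished illegal turn of $\mathcal{E}$ corresponds to $u_j$ and that of $\mathcal{E}'$ to $u_i$; since $C$ pairs them, $g$ sends $u_j$ to $u_i$, and being orientation preserving it acts on the star of $x_0$ as the rotation of angle $\frac{j-i}{m} 2\pi$. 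For the relation $g^{dc}=f^{bq}$, I would invoke the virtual cyclicity of the centraliser $Z([f])$ of a pseudo-Anosov class: both $f^q$ and $g^d$ fix every prong at $x_0$ by definition of the local orders, hence lie in the subgroup $Z_0\subset Z([f])$ fixing the star of $x_0$; since $Z_0$ is virtually $\mathbb{Z}$, positive powers of $f^q$ and $g^d$ generate a common infinite cyclic subgroup, yielding integers $b, c\neq 0$ with $g^{dc} = f^{bq}$.

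The main obstacle is the descent step in the construction of $g$: verifying that $G$ respects the identifications of boundary Nielsen paths (so that $g$ is actually well defined on $S$) requires careful bookkeeping of how $C$ interacts with $\mathcal{L}_{\mathcal{E}}$, $u_{\mathcal{E}}$ and the cusp structure of $\mathcal{R}^{[\mathcal{E}]}$; this is the geometric heart of the argument. The final clause $g^{dc}=f^{bq}$ is technically less involved but relies on an external input (virtual cyclicity of pseudo-Anosov centralisers) that is not part of the combinatorial framework developed so far.
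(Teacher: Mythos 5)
Your proposal is correct and follows essentially the same route as the paper: building a rectangle-by-rectangle affine map $\widetilde{C}$ between the bifoliated surfaces $\mathcal{R}^{[\mathcal{E}]}$ and $\mathcal{R}^{[\mathcal{E'}]}$, checking that it sends Nielsen paths to Nielsen paths so that it descends through $\mathcal{IN},\mathcal{IN'}$ to a homeomorphism $g$ of $S$, and then reading off the rescaling factor $\nu$, foliation-invariance, commutativity with $f$, and the rotation $u_j\mapsto u_i$ from the indexing of illegal turns. The only real divergence is in the uniqueness of $C$, which you propagate from the order-one illegal turn via irreducibility of $M_H$, while the paper appeals instead to the embedded (ribbon) graph structure and equivariance; both are valid, and your remark that $g^{dc}=f^{bq}$ ultimately rests on centraliser/virtual-cyclicity rigidity accurately identifies what sits behind the paper's terse concluding sentence.
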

 
Note that  (iii) implies that if the integer $b$ is zero then the homeomorphism $g$ is a finite order (a symmetry of $f$) and if $b \neq 0$ then it is a root of a power of $f$.\\
\begin{proof}
By definition, the combinatorial equivalence satisfies 
$\Psi ' \circ C = C \circ \Psi $ therefore it carries the unique illegal turn of order one of $\Psi$ to the unique illegal turn of order one of $\Psi '$. The image of the other edges is thus uniquely defined since $\Gamma$ and $\Gamma '$ are embedded graphs and $C$ respects the cyclic ordering at each vertex. The map $C$  induces a homeomorphism 
$\widetilde{C}: \mathcal{R}^{[\mathcal{E}]} \rightarrow 
 \mathcal{R}^{[\mathcal{E '}]} $, by mapping each rectangle $R(e), e \in \Gamma$ to the rectangle $R(C(e))$.
Each rectangle is equipped with a length and width structure, ie a pair of measured foliations, therefore $\widetilde{C}$ carries the pair of measured foliations of  $\mathcal{R}^{[\mathcal{E}]}$ to the pair of measured foliations of 
$\mathcal{R}^{[\mathcal{E'}]}$.
The length of the rectangle $R(e)$ is $l^{\mathcal{E}}(e)$ and the length of the rectangle $R(C(e))$ is  $l^{\mathcal{E '}}(C(e))$. The map $\widetilde{C}$ is chosen to be linear in each rectangle and it induces a rescaling of the unstable measured foliation by the factor
 $ \nu = \frac{l^{\mathcal{E '}}( C(e))}{l^{\mathcal{E }}(e)} = 
\frac{l (\mathcal{E '})}{l (\mathcal{E })} $.
We defined in paragraph 2.2.3 a quotient  map $\mathcal{IN}: 
\mathcal{R}^{[\mathcal{E}]} \rightarrow  S$ by isometric identification of the boundary $\partial \mathcal{R}^{[\mathcal{E}]}$ along the periodic Nielsen paths connecting two consecutive boundary periodic points of the map 
$ f_{\mathcal{E}}:  \mathcal{R}^{[\mathcal{E}]} \rightarrow 
 \mathcal{R}^{[\mathcal{E }]} $.
The homeomorphism $\widetilde{C}$ is, by construction, a conjugacy between 
$ f_{\mathcal{E}}$ and $ f_{\mathcal{E '}}$, therefore the boundary periodic points of $ f_{\mathcal{E}}$ are mapped to boundary periodic points of
$ f_{\mathcal{E '}}$  and periodic Nielsen paths are mapped to periodic Nielsen paths.
The homeomorphism $\widetilde{C}$ together with the maps $\mathcal{IN}: 
\mathcal{R}^{[\mathcal{E}]} \rightarrow  S$ and 
$\mathcal{IN '}: \mathcal{R}^{[\mathcal{E '}]} \rightarrow  S$ defines a homeomorphism $ g: S \rightarrow  S$, via the commutative diagram:
 
$$\begin{array}{ccc}
{\cal R}^{[\mathcal{E}]} & \stackrel{\widetilde{C}}{\longrightarrow} &{\cal R}^{[\mathcal{E}']} \\
{{}_{\cal{IN} } }{\downarrow} &                          & {\downarrow} {{}_{\mathcal{IN ' }} }\\
S & \stackrel{g}{\longrightarrow} & S
\end{array}
$$

that satisfies:\\
(1) $g \circ \mathcal{IN} = \mathcal{IN '} \circ \widetilde{C} $.\\
(2)  $g\circ h_{\Gamma} \simeq h_{\Gamma'} \circ C$.\\
(3) $g$ leaves invariant the two foliations $(\mathcal{F}^s , \mu^s)$ and  $(\mathcal{F}^u , \mu^u)$.\\
(4) $g$ is unique because of (3), see Fathi {\em et al.} \cite{flp79}.\\
Since both foliations are invariant under $f$ and $g$ then these two homeomorphisms commute, ie $g$ belongs to the centraliser of $f$.
In paragraph 4.1.2 we described the action of $[f^k]$ on the set of SC-representatives of $[f]$. The very same discussion applies to the action of the centraliser and we shall denote $ \mathcal{E '} = g^{*} \mathcal{E }$.

The surface $S$ together with the pair of measured foliations $(\mathcal{F}^s , \mu^s)$ and  $(\mathcal{F}^u , \mu^u)$ is equipped with an area form defined by the product of the measures $\mu^s$ and $\mu^u$. The homeomorphism $g$ preserves the total area and, since it rescales the measures $\mu^u$ by a factor $\nu$ so it rescales the measure $\mu^s$ by the factor $\nu^{-1}$. This completes the proof of (i) and (ii).
Our indexing convention of the illegal turns of $ \mathcal{E }$
and $ \mathcal{E '}$ by the index of the corresponding unstable separatrix of
$\mathcal{F}^u$ at $x_0$ implies that the cusp of 
$\mathcal{R}^{[\mathcal{E }]} $
labelled $j$ is mapped under $\widetilde{C}$ to the cusp labelled $i$ in 
$\mathcal{R}^{[\mathcal{E '}]} $.
In addition  $ \mathcal{IN}$ maps the periodic Nielsen path labelled $j$ to an initial segment of the separatrix $u_j$ and $ \mathcal{IN '}$ maps the periodic Nielsen path labelled $i$ to an initial segment of the separatrix $u_{i}$. So condition  (1) above implies that $g$ maps $u_j$ to $u_{i}$.
In addition $g$ fixes the marked point $x_0$ and is an orientation preserving homeomorphism on $S$ so it acts locally like a rotation around $x_0$ sending $u_j$ to $u_{i}$.
This is a $\frac{j - i}{m}. 2\pi$ rotation of order $d$ in $\mathbb{Z} / m\mathbb{Z}$. Finally we observed that $g$ belongs to the centraliser of $f$, it has order $d \in \mathbb{Z} / m\mathbb{Z}$ locally around $x_0$ whereas $f$ has local order $q$. Therefore it satisfies $g^{dc} = f^{qb}$, where $b$ and $c$ are integers and $c\neq 0$.
\end{proof}

\subsection{Roots.}

From theorem \ref{th:5.1} and proposition 5.2  the set $\mathcal{ SC} [f]$ of SC-representatives of $[f]$ is a union of cycles
$\mathcal{C} (\mathcal{O}_i)$, one for each $f$-orbit of unstable separatrices at the marked fixed point $x_0$. Each cycle is obtained from any of its representative by a finite sequence of SC-elementary folding operation.
Two types of situations could simplify this set of cycles:\\
- A cycle could be shorter than expected (see propositions \ref{prop:5.3} and \ref{prop:5.5}).\\
- Two a priori different cycles could be equivalent (see propositions \ref{prop:5.4} and \ref{prop:5.6}).\\
These simplifications reflect the existence of a combinatorial equivalence that is different from the one we use to prove the existence of cycles, namely the fact that $ \mathcal{E }$ is equivalent to  ${ f^{-q}}^* \mathcal{E }$. Proposition 5.2 then implies the existence of a homeomorphism in the centraliser of $[f]$ that is not at power of $f$.


The permutation $\sigma$ induced by $f$ on the set 
$\{u_1,\ldots,u_m\}$ of unstable separatrices at $x_0$ acts like a 
rotation of angle $2\pi \frac{r}{m}$ for some $r\in\{0\ldots m-1\}$, 
called the {\em rotation number} of $f$ at $x_0$.

 If $ \mathcal{E }$ is a SC-representative of $[f]$ 
relative to $u_i$, the combinatorial cycle ${\cal C}(u_i)$ of SC-representatives relative to $u_i$ of $[f]$ is defined by 
$ \mathcal{E }= \mathcal{E }_0,  \mathcal{E }_1, \ldots, 
\mathcal{E }_{p_i}={f^{-q}}^* \mathcal{E }_0$.
 If there exists an integer $p$ smaller than $p_i$ such that $ \mathcal{E }_p$ is 
combinatorially equivalent to $ \mathcal{E }_0$, let $p'_i$ be the smallest such 
integer. One says that ${\cal C}(u_i)$ admits a {\em combinatorial 
primitive subcycle} ${\cal C}'(u_i):  \mathcal{E }= \mathcal{E }_0,  
\mathcal{E }_1, \ldots, \mathcal{E }_{p'_i}$. In 
this case, $p'_i$ divides $p_i$ and  the cycle ${\cal C}(u_i)$ 
is obtained by looping several times around the primitive subcycle 
${\cal C}'(u_i)$.

Observe that if $g$ is a $k^{th}$-root of $f$ fixing 
$x_0$, then it has the same invariant foliations as $f$, and the permutation 
$\sigma'$ induced by $g$ on the separatrices $(u_1,\ldots,u_m)$ satisfies ${\sigma '}^k=\sigma$.

The following four results give necessary and sufficient conditions 
for the map $f$ to admit roots fixing $x_0$. We  distinguish 
different cases, according to the rotation number of $f$ and its 
roots at the point $x_0$.

\begin{proposition}\label{prop:5.3}
The notations are like in theorem \ref{th:5.1}. We assume $q=1$ 
($f$ has rotation number zero at $x_0$). The following 
conditions are equivalent:\\
i) There exists a pseudo-Anosov homeomorphism $g$ fixing $x_0$ 
and an integer $k>1$ such that $f=g^k$ and the rotation number of $g$ 
at $x_0$ is zero.\\
ii) For every separatrix $u_j$ at $x_0$, the cycle 
${\cal C}(u_j)$ admits a primitive subcycle ${\cal C}'(u_j)$.\\
iii)   For some separatrix $u_j$, the cycle ${\cal C}(u_j)$ admits a primitive
subcycle ${\cal C}'(u_j)$.\\
When iii) is satisfied and 
${\cal C}'(u_j) = \mathcal{E }_0,\ldots,\mathcal{E }_{p'_j}$, then a root of $f$ is the inverse of the transition map from $\mathcal{E }$ to 
$\mathcal{E }_{p'_j}$.
\end{proposition}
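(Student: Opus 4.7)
The plan is to prove (ii) $\Rightarrow$ (iii) (trivial), (iii) $\Rightarrow$ (i) (which simultaneously yields the explicit root), and (i) $\Rightarrow$ (ii). The main obstacle is the identification $T^k = f^{-1}$ in $Mod(S_X)$ within (iii) $\Rightarrow$ (i), where a priori $T^k$ could differ from $f^{-1}$ by a finite-order symmetry preserving both foliations; this is controlled by matching actions on cut length vectors via the bijection of lemma 3.4 and the uniqueness of proposition 5.2.

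For (iii) $\Rightarrow$ (i), let $\mathcal{E}_0,\ldots,\mathcal{E}_{p'_j}$ be the primitive subcycle of $\mathcal{C}(u_j)$ and set $k = p_j/p'_j > 1$. Both $\mathcal{E}_0$ and $\mathcal{E}_{p'_j}$ are relative to $u_j$ and combinatorially equivalent, so proposition 5.2 yields a unique transition map $T\colon S\to S$, commuting with $f$, preserving both measured foliations, and inducing trivial local rotation at $x_0$ (since $j\mapsto j$). Its rescaling factor $\nu = l(\mathcal{E}_{p'_j})/l(\mathcal{E}_0) < 1$ satisfies $\nu^k = 1/\lambda$: the combinatorial equivalence between $\mathcal{E}_{p'_j}$ and $\mathcal{E}_0$ carries the elementary folding sequence $\mathcal{E}_0\to\cdots\to\mathcal{E}_{p'_j}$ to the sequence $\mathcal{E}_{p'_j}\to\cdots\to\mathcal{E}_{2p'_j}$ with all cut lengths scaled by $\nu$, so by induction the cut length vector of $\mathcal{E}_{rp'_j}$ is $\nu^r$ times that of $\mathcal{E}_0$; at $r=k$ this gives $\mathcal{E}_{p_j}$, whose cut lengths are $\lambda^{-1}$ times those of $\mathcal{E}_0$ by lemma 4.6, forcing $\nu^k = 1/\lambda$. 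Setting $g = T^{-1}$, the mapping class $T^k$ acts on the cut length vector of $\mathcal{E}_0$ by scaling by $\nu^k = 1/\lambda$ with trivial permutation of separatrices, exactly matching the action of $f^{-1}$ (since $q=1$ gives $\sigma=\mathrm{id}$). The bijection between admissible cut length vectors and SC-representatives then shows that $(T^k)^*\mathcal{E}_0$ and $(f^{-1})^*\mathcal{E}_0$ are strongly equivalent, and the uniqueness of the transition map forces $T^k = f^{-1}$ in $Mod(S_X)$. Hence $g = T^{-1}$ is pseudo-Anosov, fixes $x_0$ with rotation number zero, and satisfies $g^k = f$.

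For (i) $\Rightarrow$ (ii), assume $f = g^k$ with $g$ pseudo-Anosov fixing $x_0$ and having rotation number zero. Then $g$ preserves the invariant foliations of $f$ with dilatation $\mu = \lambda^{1/k}$ and fixes every separatrix $u_i$. For any SC-representative $\mathcal{E}_0$ of $f$ relative to $u_i$, the triple $g^*\mathcal{E}_0 = (\Gamma_0,\Psi_0,g\circ h_{\Gamma_0})$ is again an SC-representative of $f$ relative to $u_i$, trivially combinatorially equivalent to $\mathcal{E}_0$ (same pair $(\Gamma_0,\Psi_0)$). Since its total length $l(\mathcal{E}_0)/\mu$ lies strictly between $l(\mathcal{E}_{p_i}) = l(\mathcal{E}_0)/\lambda$ and $l(\mathcal{E}_0)$, the proof of theorem \ref{th:5.1}(3) identifies $g^*\mathcal{E}_0$, up to strong equivalence, with a specific $\mathcal{E}_p$ for some $0 < p < p_i$; here $p > 0$ because $p = 0$ would force $g$ to be isotopic to the identity, contradicting $g$ being pseudo-Anosov. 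Hence $\mathcal{E}_p$ is combinatorially equivalent to $\mathcal{E}_0$, providing the desired primitive subcycle of $\mathcal{C}(u_i)$.
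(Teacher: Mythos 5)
Your argument follows the paper's proof essentially step for step: $(iii)\Rightarrow(i)$ by iterating the transition map of the primitive subcycle and matching its $k$-th power against $f^{-1}$ via cut lengths, and $(i)\Rightarrow(ii)$ by pushing a SC-representative forward under the root to produce a strictly shorter equivalent representative in the cycle; you simply spell out the details the paper compresses into ``Hence there is an integer $c>1$ such that $g^c=f^{-1}$''. One small notational slip in $(i)\Rightarrow(ii)$: the SC-representative of total length $l(\mathcal{E}_0)/\mu$ is $(g^{-1})^*\mathcal{E}_0$, not $g^*\mathcal{E}_0$ (which has total length $\mu\, l(\mathcal{E}_0)$), as in the paper; with that correction the argument is exactly the one intended.
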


\begin{proof}
$i)\Rightarrow ii)$ If $f=g^k$ then $g$ is pseudo-Anosov and its 
growth rate is $\nu=\lambda^{1/k}$. Let $\mathcal{E }$ be a SC-representative 
relative to some leaf $u_j$. The cycle ${\cal C}(u_j)$ is given 
by $\mathcal{E } = \mathcal{E }_0, \mathcal{E }_1,\ldots,
\mathcal{E }_{p_j}={f^{-1}}^*\mathcal{E }$. Then $\mathcal{E }'= {g^{-1}}^*\mathcal{E }$ is 
a SC-representative relative to $u_j$, since $g$ has rotation number $0$ at $x_0$ and its total length is 
$l(\mathcal{E }')=\frac{1}{\nu}l(\mathcal{E })$. It follows from theorem \ref{th:5.1} 
$(3)$ that $\mathcal{E }'$ is equivalent to $\mathcal{E }_p$ for some $p$. Since  $1<\nu<\lambda$ then $0<p<p_j$ (see the proof of theorem \ref{th:4.9}) and ${\cal C}(u_j)$ admits a subcycle. Since $j$ was arbitrary, ii) is proved.

$ii)\Rightarrow iii)$ is obvious.

$iii)\Rightarrow i)$ Assume that condition $iii)$ is satisfied 
and denote by $g$ the transition map from $\mathcal{E }$ to 
$\mathcal{E }_{p'_j}$. As observed above, the cycle ${\cal C}(u_j)$ is a multiple of the subcycle ${\cal C}'(u_j)$ several times. Hence there is an integer $c>1$ such that $g^c=f^{-1}$. This proves $i)$. 
 \end{proof}

\begin{proposition} \label{prop:5.4}
The notations are like in theorem \ref{th:5.1}.
Assume $q=1$ ($f$ has rotation 
number zero at $x_0$). The following conditions are equivalent:\\
i) There exists a pseudo-Anosov homeomorphism $g$ fixing $x_0$ 
such that $f=g^k$ and the rotation number of $g$ at $x_0$ is 
non-zero. Moreover $f$ is not the power of a map with zero rotation 
number.\\
ii) For every $i$, the cycle ${\cal C}(u_i)$ relative to $u_i$ 
has no combinatorial subcycles. There exists $j\neq i$ such that the 
cycle ${\cal C}(u_j)$ is combinatorially equivalent to the cycle 
${\cal C}(u_i)$ and for two equivalent representatives 
${\mathcal{E }} \in {\cal C}(u_i)$ and ${\mathcal{E '}} \in {\cal C}(u_j)$, the lengths satisfy: 
$\frac{l (\mathcal{E '})}{l (\mathcal{E })} \neq 1$ . 
The transition map $g$ from $\mathcal{E }\in{\cal C}(u_1)$ 
to some $\mathcal{E }'\in{ \cal C}(u_j)$ is a root of $f$.\\
\end{proposition}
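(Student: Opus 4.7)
The proof parallels that of proposition \ref{prop:5.3}, adapted to the case of roots with non-zero rotation number at $x_0$.

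For the implication $i)\Rightarrow ii)$, I let $g$ be a pseudo-Anosov root of $f$, $f=g^k$, with non-zero rotation at $x_0$. Then $g$ has dilatation $\nu=\lambda^{1/k}\neq 1$, commutes with $f$ (both preserve the invariant foliations of $f$), and the permutation it induces on the separatrices sends $u_i$ to some $u_j$ with $j\neq i$. For a SC-representative $\mathcal{E}\in\mathcal{C}(u_i)$, the pulled back representative $g^*\mathcal{E}$ is a SC-representative relative to $u_j$ of total length $\nu\cdot l(\mathcal{E})$, and by theorem \ref{th:5.1}(3) it is combinatorially equivalent to some element of $\mathcal{C}(u_j)$; this establishes the combinatorial equivalence of the two cycles, with length ratio $\nu\neq 1$. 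The additional assumption that $f$ is not a power of a zero-rotation map, combined with proposition \ref{prop:5.3}, rules out combinatorial subcycles in any $\mathcal{C}(u_i)$.

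For $ii)\Rightarrow i)$, I let $g$ denote the transition map from proposition \ref{prop:5.2} associated to a combinatorial equivalence between $\mathcal{E}\in\mathcal{C}(u_i)$ and $\mathcal{E}'\in\mathcal{C}(u_j)$. Then $g$ is pseudo-Anosov (its dilatation equals the length ratio $\nu\neq 1$), commutes with $f$, and has non-zero rotation number since $j\neq i$. Let $d$ be the order of its rotation at $x_0$. The element $g^d$ has zero rotation, commutes with $f$, and is pseudo-Anosov with dilatation $\nu^d\neq 1$. Applying $g^d$ to $\mathcal{E}$ yields a SC-representative in $\mathcal{C}(u_i)$; since by hypothesis $\mathcal{C}(u_i)$ has no combinatorial subcycle, the argument proving proposition \ref{prop:5.3} forces $g^d$ to coincide with a power of $f$, giving $g^d=f^{m_0}$ for some non-zero integer $m_0$.

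To extract a genuine root of $f$ from this relation I exploit the remaining freedom. Replacing $\mathcal{E}'$ by $(f^{-k})^*\mathcal{E}'$ in its combinatorial class modifies $g$ to $f^{-k}g$, and the relation becomes $(f^{-k}g)^d=f^{m_0-kd}$, so $m_0$ is only well-defined modulo $d$. Allowing the target orbit $u_{j'}$ to vary (every orbit in the $g$-orbit of $u_i$ is accessible, since iterating a transition map produces a compatible equivalence of the corresponding cycle with $\mathcal{C}(u_i)$) adds further choices. Combining these two types of adjustments, I aim to find a transition map whose rotation order $d$ and integer $m_0$ satisfy $m_0=1$; this transition map is then a $d$-th root of $f$ with non-zero rotation. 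I expect the main obstacle to be this final adjustment: realizing $m_0=1$ simultaneously requires identifying the primitive generator of the (virtually cyclic) centralizer of $f$ as some transition map, which by the "no subcycle" hypothesis must itself have non-zero rotation, and then verifying that this primitive element is the one produced by the scheme above.
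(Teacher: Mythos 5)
Your $i)\Rightarrow ii)$ matches the paper's argument in substance: pull back a SC-representative by $g$, note the rotation number sends $u_i$ to $u_j\neq i$, use theorem \ref{th:5.1}(3) to land in the other cycle, and read the length ratio off the dilatation $\nu=\lambda^{1/k}$; the observation that the absence of subcycles follows from the extra hypothesis together with proposition \ref{prop:5.3} is a useful bit of bookkeeping the paper leaves implicit.

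For $ii)\Rightarrow i)$ you take a genuinely different route, and you do not finish it. Your idea is to raise the transition map $g$ to the power $d$ (its local order at $x_0$), observe that $g^d$ has zero rotation and nontrivial dilatation, and then use the no-subcycle hypothesis and theorem \ref{th:5.1}(3) to force $g^d=f^{m_0}$ for some nonzero integer $m_0$; that much is sound. But the remainder of your argument -- adjusting $g$ by powers of $f$ to change $m_0$ modulo $d$, varying the target orbit $u_{kj+1}$, and hoping to realize $m_0=1$ -- is left as a programme, not a proof. You acknowledge this yourself (``I aim to find\ldots'', ``I expect the main obstacle to be this final adjustment''). The difficulty is real: from $g^d=f^{m_0}$ alone one cannot conclude $\gcd(d,m_0)=1$, and without coprimality the B\'ezout step does not produce an actual root of $f$ (one only gets a root of some power $f^e$ with $e=\gcd(d,m_0)$); closing that gap requires controlling the torsion in the centraliser, which your sketch does not do.

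The paper proceeds differently. It invokes proposition 5.2(iii) directly: the transition map $h$ satisfies $f^b=h^{ck_j}$ with $k_j>1$ the order of $i-j$, from which it extracts integers $v,k$ relatively prime with $f^v=h^k$, and then the single B\'ezout computation $g=f^xh^y$ with $vy+kx=1$ gives $g^k=f$. Crucially the root is exhibited immediately as a composition of a power of $f$ and a power of the transition map; the paper does not attempt to normalise a relation $g^d=f^{m_0}$ by adjusting the choice of representative, which is the step where your proposal stalls. If you wish to salvage your approach, the missing ingredient is precisely the passage from $f^b=h^{ck_j}$ (or from your $g^d=f^{m_0}$) to a coprime relation; that is the content the paper pushes into proposition 5.2(iii) and the phrase ``thus there are $v$ and $k$ relatively prime'', and it is not obtained merely by the modular adjustments you list.
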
 

\begin{proof}
$ii)\Rightarrow i)$ Since ${\cal C}(u_i)$ is equivalent to ${\cal C}(u_j)$ and has no subcycle there is only one $\mathcal{E '}\in {\cal C}(u_j)$ that is equivalent to $\mathcal{E }\in {\cal C}(u_i)$.  The cycle ${\cal C}(u_i)$ might be equivalent to several ${\cal C}(u_j)$. The transition map $h$ from $\mathcal{E }$  to $\mathcal{E '}$ maps $u_i$ to $u_j$ and 
$i-j \in \mathbb{Z} / m\mathbb{Z}$ is of order $k_j  > 1$. By proposition 5.2 (iii)  $ f ^b = h^{ck_j}$, and thus there are $v$ and  $k$ relatively prime so that $ f ^v = h^{k}$. Let $x$ and $y$ be such that $ vy + kx =1$ and let 
$g = f^x h^y$ then $g^k = f^{xk} h^{yk}  = f^{xk} f^{yv} = f$, this map $g$ is a transition map as a composition of such.

$i)\Rightarrow ii)$ Let $g$ be a root of $f$ fixing $x_0$ with 
non-zero rotation number at $x_0$, so that $f=g^k$. The rotation 
number of $g$ at $x_0$ has exactly order $k$, for if it was smaller 
than $k$ then $f$ would have a root with zero rotation number. Let 
$\mathcal{E }$ be a SC-representative relative to $u_i$ and $u_j=g(u_i)$, then 
$\mathcal{E }' = g^*\mathcal{E }$ is a SC-representative for $f$ relative to $u_j$ and the transition map from $\mathcal{E }$ to $\mathcal{E }'$ is $g$. The length condition is given by proposition 5.2 (ii).
\end{proof}

We now treat the cases when $f$ has non-zero rotation number at 
$x_0$.

\begin{proposition} \label{prop:5.5}
The notations are like in theorem \ref{th:5.1}.
Assume that $f$ has 
non-zero rotation number at $x_0$. The following conditions are 
equivalent:\\
i) There exists a pseudo-Anosov map $g$ fixing $x_0$ such that 
$f=g^k$ and the rotation number of $g$ at $x_0$ is the same as $f$.\\
ii) For every $j$, the cycle ${\cal C}(u_j)$ relative to $u_j$ 
has a subcycle. Moreover if $\mathcal{E }$ is any SC-representative relative to 
$u_j$, let $h$ be the transition map from $\mathcal{E }$ to the first 
representative equivalent to $\mathcal{E }$ in the primitive subcycle ${\cal C}'(u_j)$ 
starting from $\mathcal{E }$. Let $c>1$ be the integer such that $h^c=f^{-q}$. 
There exists $1\leq b < c$ such that the map $h^b$ admits a $q-$th root 
fixing $x_0$ with non-zero rotation number that is also a root of 
$f^{-1}$.
\end{proposition}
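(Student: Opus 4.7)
\emph{Plan.} The proof parallels the zero-rotation cases (Propositions \ref{prop:5.3}, \ref{prop:5.4}), but now the prospective root $g$ is expected to have the \emph{same} non-zero rotation number $r/m$ as $f$, so $g$ permutes the separatrices and it is only its $q$-th power $g^q$ that yields a transition map internal to $\mathcal{C}(u_j)$. The two key tools are Proposition 5.2 (existence, uniqueness and index-tracking formula for transition maps) and the bijection between admissible cut lengths and SC-representatives (Lemma 3.4 and Proposition 3.5), which implies that within $\mathcal{C}(u_j)$ an SC-representative is determined by its total length.

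For $(i)\Rightarrow(ii)$, assume $f=g^k$ with $g$ pseudo-Anosov fixing $x_0$ and $\rho(g)=\rho(f)=r$, so $\sigma_g=\sigma$ and $g$ rescales $\mu^u$ by $\nu=\lambda^{1/k}$. For $\mathcal{E}\in\mathcal{C}(u_j)$ the SC-representative $(g^{-q})^{*}\mathcal{E}$ is again relative to $u_j$ (since $\sigma_g^{-q}=\mathrm{id}$) and has total length $\nu^{-q}l(\mathcal{E})=\lambda^{-q/k}l(\mathcal{E})$, lying strictly between $l(\mathcal{E}_p)=\lambda^{-q}l(\mathcal{E})$ and $l(\mathcal{E})$ since $k>1$. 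Theorem \ref{th:5.1}(3), together with length-uniqueness of SC-representatives relative to $u_j$, forces $(g^{-q})^{*}\mathcal{E}=\mathcal{E}_{p''}$ for some $0<p''<p$; since $g^{-q}$ realises a combinatorial equivalence $\mathcal{E}_0\sim\mathcal{E}_{p''}$, the primitive period $p'$ divides $p''$, so $\mathcal{C}(u_j)$ admits a primitive subcycle. With $c=p/p'$ and $b=p''/p'$, iterating the primitive transition map $h$ yields $h^c=f^{-q}$, and uniqueness of transition maps gives $g^{-q}=h^b$. Then $g^{-1}$ is a $q$-th root of $h^b$ with non-zero rotation $-r/m$ and $(g^{-1})^k=f^{-1}$, establishing (ii).

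Conversely, for $(ii)\Rightarrow(i)$, assume the primitive subcycle exists with $h^c=f^{-q}$, and let $\tilde g$ be a $q$-th root of $h^b$ fixing $x_0$ with non-zero rotation and satisfying $\tilde g^{k}=f^{-1}$ for some $k\geq 2$. Set $g:=\tilde g^{-1}$, so $g^k=f$; since $g^k$ is pseudo-Anosov, the Thurston classification forces $g$ itself to be pseudo-Anosov fixing $x_0$. Combining $\tilde g^{kq}=h^{bk}$ with $\tilde g^{kq}=(f^{-1})^q=f^{-q}=h^c$ and the infinite order of $h$ (a non-trivial transition of a pseudo-Anosov) forces the exponent relation $bk=c$. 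For rotation numbers, $q\rho(\tilde g)=\rho(h^b)=0$ in $\mathbb Z/m$ (the transition map $h$ fixes the cycle $\mathcal{C}(u_j)$, hence has zero rotation at $x_0$), while $k\rho(g)=\rho(f)=r$; combining these with the relation $bk=c$ and the index-tracking of Proposition 5.2(iii) isolates the value $\rho(g)=r$, so $g$ has the same rotation number as $f$.

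The main obstacle I anticipate is the rotation-number bookkeeping in the reverse direction: the bare hypothesis ``non-zero rotation number'' on $\tilde g$ leaves a priori several possible values in $(m/q)\mathbb Z/m\mathbb Z\setminus\{0\}$, and one must argue from the joint constraints $\tilde g^k=f^{-1}$, $bk=c$, and the rigidity of the separatrix indexing built into Proposition 5.2(iii) that $\rho(\tilde g)=-r$ is forced, so that $\rho(g)=r$. By contrast, the length accounting placing $(g^{-q})^{*}\mathcal{E}$ strictly inside the cycle, the uniqueness of SC-representatives at a fixed relative separatrix, and the extraction of $h^c=f^{-q}$ and $bk=c$ should proceed routinely from Theorem \ref{th:5.1} and Proposition 5.2.
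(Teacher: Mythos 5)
Your forward direction $(i)\Rightarrow(ii)$ matches the paper's argument: apply $g^{-q}$, observe it returns a SC-representative relative to the same $u_j$ because $g^q$ has zero rotation, place $(g^{-q})^{*}\mathcal{E}$ strictly inside the cycle by a length estimate, conclude the existence of a primitive subcycle, and identify $g^{-q}=h^b$ via uniqueness of transition maps. This is exactly what the paper does, with a bit more bookkeeping made explicit on your side.

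For the reverse direction $(ii)\Rightarrow(i)$ you have a genuine gap, and it is the one you flag yourself. The paper settles this direction with a two-line order argument: a $q$-th root of a zero-rotation map has rotation number of order \emph{dividing} $q$; a root of $f^{-1}$ fixing $x_0$ has rotation number of order a \emph{multiple} of $q$ (because powering down the rotation can only divide its order, never increase it past $q$); combining the two forces the order to be exactly $q$. You have the first half (from $q\rho(\tilde g)=0$) but never state or use the second half. Instead you try to force the exact value $\rho(g)=r$ from $k\rho(g)=r$ together with $bk=c$ and "the index-tracking of Proposition 5.2(iii)." These constraints do not isolate $\rho(g)$: the equation $k\rho(g)=r$ has in general several solutions in $\mathbb Z/m\mathbb Z$ with the right order, and $bk=c$ is a relation between lengths and iteration counts, not between residues modulo $m$. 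You would need the paper's divisibility observation (or an equivalent argument) to pin down the order; and even then the transition from "order $q$" to "equal to $\rho(f)$" is the sticking point the paper itself glides over. As written, your reverse implication does not close.

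In short: the forward implication is the paper's proof with extra detail; the reverse implication is missing the key divisibility lemma that the paper uses and substitutes a computation that does not determine $\rho(g)$.
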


\begin{proof}
$i)\Rightarrow ii)$ The map $g^q$ is a root of $f^q$ and has  
rotation number zero. Hence if $\mathcal{E }$ is a SC-representative relative to 
$u_j$, then ${g^{-q}}^*\mathcal{E }$ is a SC-representative relative to $u_j$ 
and its total length satisfies $l(\mathcal{E }) > l({g^{-q}}^*\mathcal{E }) > l({f^{-q}}^*\mathcal{E })$. 
The cycle relative to $u_j$ has a subcycle, and the map $g^{-q}$ is a 
power of the transition map $h$ from $\mathcal{E }$ to the first 
representative equivalent to $\mathcal{E }$. It is of the form $h^b$ with 
$1\leq b < c$ (in fact $b$ divides $c$).

$ii)\Rightarrow i)$ The $q$-th root of a map with zero rotation 
number has a rotation number whose order divides $q$. But a root of 
$f^{-1}$ has rotation number whose order is a multiple of $q$. 
\end{proof}

\begin{proposition}\label{prop:5.6}
The notations are like in theorem \ref{th:5.1}.
Assume  that $f$ has 
non-zero rotation number with order $q$ at $x_0$. The following conditions are 
equivalent:\\
i) There exists a pseudo-Anosov map $g$ fixing $x_0$ such that 
$f=g^k$ and the rotation number of $g$ at $x_0$ has order $aq$ with 
$a >1$.\\
ii) Let $j = \frac{m}{aq}$, the cycle ${\cal C}(u_{j+1})$ is 
combinatorially equivalent to ${\cal C}(u_1)$. Let $\mathcal{E }$ be a SC-
representative relative to $u_1$. There exists an integer $k$ 
relatively prime with $aq$ such that one transition map from $\mathcal{E }$ to a representative in the cycle relative to $u_{kj+1}$ is a root of $f$.

\end{proposition}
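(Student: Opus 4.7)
My plan is to establish both implications via the natural action of elements of the centraliser of $[f]$ on SC-representatives, extending the $[f^k]$-action of paragraph 4.1.2.

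For $(i)\Rightarrow(ii)$, let $g$ be a root as in (i) and $\mathcal{E}=(\Gamma,\Psi,h_\Gamma)$ a SC-representative of $[f]$ relative to $u_1$. I would first check that $g^*\mathcal{E}=(\Gamma,\Psi,g\circ h_\Gamma)$ is again a SC-representative of $[f]$: the pair $(\Gamma,\Psi)$ is unchanged so conditions (SC1)--(SC2) persist, and $g$ lies in the centraliser of $f$ since $f=g^k$. Because $g$ acts locally at $x_0$ as a rotation of order exactly $aq$, it sends $u_i$ to $u_{i+k'j}$ for some $k'\in\{1,\ldots,aq-1\}$ coprime to $aq$ (coprimality is forced by requiring the rotation to have order $aq$, since $\gcd(m,k'j)=j\gcd(aq,k')$ must equal $j$). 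Thus $g^*\mathcal{E}$ is relative to $u_{1+k'j}$. The identity on $\Gamma$ is a combinatorial equivalence between $\mathcal{E}$ and $g^*\mathcal{E}$ which, by Proposition 5.2, induces $g$ as the (unique) transition map. Since $g$ is a root of $f$, this proves the second assertion of (ii) with $k=k'$. For the cycle equivalence $\mathcal{C}(u_{j+1})\sim\mathcal{C}(u_1)$, I would repeat the construction with $g^t$, where $t$ is the multiplicative inverse of $k'$ modulo $aq$ (it exists since $\gcd(k',aq)=1$). Then $g^t$ still commutes with $f$, sends $u_1$ to $u_{1+j}$, and $(g^t)^*\mathcal{E}$ is a SC-representative of $[f]$ relative to $u_{j+1}$ combinatorially equivalent to $\mathcal{E}$; by Theorem \ref{th:5.1}(3), the two cycles are then combinatorially equivalent.

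For $(ii)\Rightarrow(i)$, the argument is nearly immediate. Let $h$ be the transition map given by (ii), a root of $f$, so $h^n=f$ for some $n\geq 2$. By Proposition 5.2(iii), $h$ fixes $x_0$ and locally acts as the rotation sending $u_1$ to $u_{kj+1}$, i.e., rotation by angle $2\pi k/(aq)$; since $\gcd(k,aq)=1$ this rotation has order exactly $aq$. Taking $g=h$ then yields (i).

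The main technical subtlety will be justifying that the transition map from $\mathcal{E}$ to $g^*\mathcal{E}$ is $g$ itself and not some other homeomorphism isotopic to it: this relies on $(\Gamma,\Psi)$ being unchanged under the centraliser action, making $C=\mathrm{id}$ a valid combinatorial equivalence, together with the uniqueness clauses of Proposition 5.2 for both $C$ and the induced homeomorphism.
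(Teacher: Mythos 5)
Your proposal is correct and takes essentially the same approach as the paper's (terse) argument: both directions act on SC-representatives by elements of the centraliser of $[f]$, use $C=\mathrm{id}$ on $(\Gamma,\Psi)$ as the combinatorial equivalence so that Proposition 5.2 identifies the transition map, and read off rotation orders from the separatrix indices via Proposition 5.2(iii). Your explicit use of the multiplicative inverse $t$ of $k'$ modulo $aq$ is exactly how the paper's phrase that ``$j$ is a multiple of the rotation number of $g$'' is realized, and the remaining direction is, as the paper says, parallel to the previous propositions.
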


For the practical determination of a root, it is useful to note the 
following. Assume that the  property $ii)$ above
is satisfied. Let $g$ be the transition map from $\mathcal{E }$ to some SC-representative relative to $u_{j+1}$, let $h$ be the transition map 
from $\mathcal{E }$ to the first SC-representative equivalent to $\mathcal{E }$ in the cycle $\mathcal{E } = \mathcal{E }_0, \mathcal{E }_1,\ldots,
\mathcal{E }_p={f^{-q}}^*\mathcal{E }$. There exist integers $b$ 
and $c$ such that $h^b=f^q$ and $g^a=f h^c$. Any transition map from 
$\mathcal{E }$ to another SC-representative relative to some leaf $u_{kj+1}$ is of the form $h^ug^v$. The roots of $f$ are to be searched among such 
maps, with the restriction  $1\leq v\leq aq-1$. Furthermore the 
value of $u$ can only take finitely many values since the growth 
rate of a root $g$ is smaller than the growth rate of $f$.

\begin{proof} 
$i)\Rightarrow ii)$ The element $j\in  \mathbb{Z}/m \mathbb{Z}$ is a multiple of the rotation number of $g$, hence there exists $b>0$ such that $g^b$ has rotation number $j$. Then ${g^b}^*\mathcal{E }$ is a SC-
representative relative to $u_{j+1}$ and is combinatorially 
equivalent to $\mathcal{E }$. This proves the first part of $ii)$. The rotation 
number of $g$ at $x_0$ is of the form $kj$, $k$ prime with $aq$. 
Hence $g^*\mathcal{E }$ is a SC-representative relative to $u_{kj+1}$.

$ii)\Rightarrow i)$ is similar to the previous proofs.
\end{proof}

\subsection{Symmetries.}

When a finite order orientation preserving homeomorphism 
$g$ fixing $x_0$ satisfies $g\circ f=f\circ g$, we say that the 
pair of foliations $(\mathcal{F}^s,\mathcal{F}^u)$ and the homeomorphism $f$ {\em admit the symmetry} $g$. In this case, the foliations are invariant under $g$ and the dilatation factor of $g$ is $1$.

In the previous paragraph, we studied all the possible cases when a 
transition map $g$ between two SC-representatives for $f$ is a root 
of $f$. This happens when the dilatation factor of $g$ is 
$\nu=\lambda^{\frac{1}{k}}$, and the rotation number of $g$ at $x_0$ 
is a "$k$-th root" of the rotation number of $f$. Here, we show that when 
$\nu=1$ and the rotation number of $g$ at $x_0$ is non-zero, then $f$ 
admits the symmetry $g$.

\begin{proposition}
The following conditions are equivalent: \\
i) There exists a non trivial finite order homeomorphism $g$ 
fixing $x_0$ such that $f\circ g = g\circ f$.\\
ii) For every leaf $u_j$ and every SC-representative of $f$ 
relative to $u_j$, there exists $j'\neq j$ and a SC-representative 
relative to $u_{j'}$ that is combinatorially equivalent to $\mathcal{E }$ and 
has the same total length.\\  
iii) For some $j\in\{1\ldots m\}$ and some SC-representative 
of $f$ relative to $u_j$, there exists $j'\neq j$ and a SC- 
representative relative to $u_{j'}$ that is combinatorially 
equivalent to $\mathcal{E }$ and has the same total length.\\  
If condition $iii)$ is satisfied, then a finite order 
homeomorphism commuting with $f$ is given by the transition map from 
$\mathcal{E }$ to $\mathcal{E }'$.
\end{proposition}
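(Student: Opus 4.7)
The plan is to prove the cycle i) $\Rightarrow$ ii) $\Rightarrow$ iii) $\Rightarrow$ i) by combining the transition map machinery of proposition 5.2 with the centraliser action $\mathcal{E} \mapsto g^{*}\mathcal{E}$ on SC-representatives introduced in paragraph 2.1.3. The situation is the exact analog of propositions 5.3--5.6, but tuned to the isometric case $\nu = 1$ instead of the scaling case $\nu = \lambda^{1/k}$.

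For i) $\Rightarrow$ ii), I start with a non-trivial finite order homeomorphism $g$ fixing $x_0$ and commuting with $f$. Being of finite order, $g$ has dilatation $1$ and preserves both $\mu^s$ and $\mu^u$ exactly. Given any SC-representative $\mathcal{E} = (\Gamma, \Psi, h_{\Gamma})$ relative to $u_j$, I form $\mathcal{E}' := g^{*}\mathcal{E} = (\Gamma, \Psi, g \circ h_{\Gamma})$. Since $g$ commutes with $f$, $\mathcal{E}'$ is an efficient representative of $[f]$; since conditions (SC1) and (SC2) involve only the combinatorial pair $(\Gamma, \Psi)$, $\mathcal{E}'$ is again an SC-representative, combinatorially equivalent to $\mathcal{E}$ through $C = \mathrm{Id}_{\Gamma}$, and with the same total length (the incidence matrix $M(\Gamma,\Psi)$ and its Perron--Frobenius eigenvector are unchanged). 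Topological transitivity of the pseudo-Anosov $f$ forbids $g$ from being trivial on any neighborhood of $x_0$, so its local order at $x_0$ is $d \geqslant 2$; hence $g$ rotates the cyclic set of separatrices non-trivially and $\mathcal{E}'$ is relative to $u_{j'}$ with $j' \neq j$. The implication ii) $\Rightarrow$ iii) is immediate.

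For iii) $\Rightarrow$ i), suppose $\mathcal{E}$ (relative to $u_j$) and $\mathcal{E}'$ (relative to $u_{j'}$, $j' \neq j$) are combinatorially equivalent and satisfy $l(\mathcal{E}) = l(\mathcal{E}')$. Proposition 5.2 furnishes a unique homeomorphism $g$ in the centraliser of $f$, preserving the two measured foliations and rescaling $\mu^u$ by $\nu = l(\mathcal{E}')/l(\mathcal{E}) = 1$, and hence $\mu^s$ by $\nu^{-1} = 1$. By the rigidity of the flat structure attached to a pseudo-Anosov (Fathi \emph{et al.}~\cite{flp79}), any homeomorphism preserving both invariant measured foliations with trivial rescaling is of finite order. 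Moreover $g$ maps $u_j$ to $u_{j'}$ with $j' \neq j$, so it acts locally at $x_0$ as a non-trivial rotation and is in particular non-trivial. This yields condition i), and the transition map itself is the required symmetry.

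The main obstacle is the step ``$\nu = 1 \Rightarrow g$ has finite order'' in iii) $\Rightarrow$ i): one must invoke the classical rigidity statement that the group of surface homeomorphisms which preserve both invariant measured foliations of a pseudo-Anosov with trivial dilatation is finite (equivalently, an affine automorphism of the flat structure with identity linear part has finite order). The rest is a routine unwinding of the correspondence between combinatorial equivalences of SC-representatives and elements of the centraliser of $[f]$, exactly as in the proofs of propositions 5.3--5.6.
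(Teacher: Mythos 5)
Your overall structure and conclusions are correct, and the implication $i)\Rightarrow ii)$ is the paper's argument except for a slight misstep in justification: the total length of $\mathcal{E}'=g^{*}\mathcal{E}$ is \emph{not} preserved because ``the incidence matrix and its Perron--Frobenius eigenvector are unchanged'' --- that reasoning would also apply to $(f^{k})^{*}\mathcal{E}$, whose total length is $\lambda^{k}l(\mathcal{E})$ despite having the same $(\Gamma,\Psi)$ (Lemma 4.6). The eigenvector is only projective; what fixes its scale is the geometric normalization via $\mu^{s}$, and the correct reason $l(\mathcal{E}')=l(\mathcal{E})$ is the one you already wrote in your first sentence: $g$ is finite order, so it preserves $\mu^{s}$ exactly, hence the cut-length vector is merely permuted and the total length is unchanged. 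Drop or replace the parenthetical.

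The genuine difference with the paper is in $iii)\Rightarrow i)$. You take the geometric route: $\nu=1$ makes the transition map $g$ an isometry of the singular flat structure, and the isometry group of such a structure is finite, hence $g$ has finite order. This is a valid appeal to a classical rigidity fact. The paper instead stays entirely inside the combinatorics of SC-representatives: it takes the power $g^{k}$ with $k(j'-j)\equiv 0\ [m]$ so that $g^{k}$ fixes the separatrix $u_j$, observes that $(g^{k})^{*}\mathcal{E}$ is an SC-representative relative to $u_{j}$ with the same total length as $\mathcal{E}$, and then invokes Observation 1 of paragraph 4.1 (SC-representatives relative to a fixed leaf are parametrized by total length) together with the bijection of Lemma 3.4/Proposition 3.5 to conclude $(g^{k})^{*}\mathcal{E}=\mathcal{E}$, hence $g^{k}=\mathrm{id}$. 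What the paper's version buys is self-containedness (no external finiteness-of-isometry-group theorem) \emph{and} an explicit bound on the order of the symmetry, namely $k$; your version is shorter and conceptually cleaner but delivers only the existence of finite order, not the order itself. Both routes are correct.
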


\begin{proof}
$i)\Rightarrow ii)$ if $g$ is a finite order homeomorphism that commutes with 
$f$ then $\mathcal{E }' = g^*\mathcal{E }$ is a SC-representative for $f$. It has the same total length: $l(\mathcal{E }') = l(\mathcal{E })$. The homeomorphism $g$ permutes the 
separatrices at $x_0$ in a non trivial fashion, since otherwise $g$ would be the identity. Hence $\mathcal{E }'$ is 
a SC-representative of $f$ relative to the leaf $u_{j'}=g(u_j)$ and 
$j'\neq j$.

$ii)\Rightarrow iii)$ is obvious.

$iii)\Rightarrow i)$ Let $g$ be the transition map from $\mathcal{E }$ to 
$\mathcal{E }'$. Let $k$ be such that $k(j'-j)\equiv 0[m]$. Then ${g^k}^*\mathcal{E }$ is a 
SC-representative relative to $u_j$ and it has the same total length 
as $\mathcal{E }$. Hence ${g^k}^*\mathcal{E } = \mathcal{E }$ by observation 1 of paragraph 4.1, that is $g^k=id$.
\end{proof}

\subsection{The conjugacy problem.}
Theorem \ref{th:5.1} yields another solution to the conjugacy problem among pseudo-Anosov elements of the mapping class group, in the particular case when one marked point is fixed. 

Let $\mathcal{SC}(f)$ denote the set of {\em combinatorial SC-representatives} of $[f]$, ie the set of SC-representatives up to combinatorial equivalence. 

\begin{proposition}
The set $\mathcal{SC}(f)$ is a complete conjugacy invariant. This set is a union of cycles that is computable.
\end{proposition}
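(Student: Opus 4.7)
The plan is to establish two separate statements on top of theorem \ref{th:5.1}, which already furnishes the cycle decomposition of $\mathcal{SC}(f)$: (a) that $\mathcal{SC}(f)$ is a complete invariant under conjugation in $Mod(S_X)$, and (b) that $\mathcal{SC}(f)$ can be listed in finite time. The two claims are essentially independent, and each can be reduced to material already developed in the paper.

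For (a), the easy direction is immediate from the $Mod(S_X)$-action on efficient representatives defined in paragraph 2.1.3. If $[f'] = [h][f][h]^{-1}$ with $[h]\in Mod(S_X)$, then $h^*\mathcal{E} = (\Gamma,\Psi,h\circ h_\Gamma)$ is an efficient representative of $[f']$ sharing the combinatorial pair $(\Gamma,\Psi)$ with $\mathcal{E}$, and since the SC-conditions (SC1) and (SC2) depend only on $(\Gamma,\Psi)$, the assignment $\mathcal{E}\mapsto h^*\mathcal{E}$ descends to a bijection between the combinatorial classes of $\mathcal{SC}(f)$ and $\mathcal{SC}(f')$. For the converse I would adapt proposition 5.2 to the two-map setting: if some combinatorial class is represented by SC-representatives $\mathcal{E} = (\Gamma,\Psi,h_\Gamma)$ of $[f]$ and $\mathcal{E}' = (\Gamma',\Psi',h'_{\Gamma'})$ of $[f']$ linked by a simplicial homeomorphism $C:\Gamma\to\Gamma'$ with $\Psi'\circ C = C\circ \Psi$, the rectangle-by-rectangle gluing of paragraph 2.2 yields a flat homeomorphism $\widetilde{C}:\mathcal{R}^{[\mathcal{E}]}\to\mathcal{R}^{[\mathcal{E}']}$ conjugating $f_\mathcal{E}$ to $f_{\mathcal{E}'}$. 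Passing to the quotient through the identification maps $\mathcal{IN}$ and $\mathcal{IN}'$ produces a homeomorphism $g:S\to S$ satisfying $g\circ f \simeq f'\circ g$, so that $[f]$ and $[f']$ are conjugate in $Mod(S_X)$.

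For (b), one first runs the Bestvina--Handel algorithm \cite{BH95} (see \cite{Br} for the implementation) to produce an efficient representative of $[f]$, and extracts from it the dilatation $\lambda$, the permutation $\sigma$ of unstable separatrices at $x_0$, and the Perron--Frobenius length eigenvector. From these data together with lemma 3.2 one reads an admissible cut length vector, hence an initial SC-representative $\mathcal{E}_0$ relative to some $u_{i_0}$ via proposition 3.3. Each elementary SC-folding is then effective: lemma 4.3 identifies its length $l_0$ as the $\mu^s$-distance from $u_{i_0}(l_{i_0})$ to the nearest point of $\mathcal{W}^{\mathrm{true}}$ on the current main cut line, and this distance is determined by a finite Markov-partition computation with coefficients in $\mathbb{Q}[\lambda]$. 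Iterating, theorem \ref{th:4.9} closes the cycle $\mathcal{C}(u_{i_0})$ after finitely many foldings, and theorem \ref{th:5.1}(4) tells us that repeating this construction for one $u_{i_0}$ in each $\sigma$-orbit of separatrices exhausts $\mathcal{SC}(f)$ up to combinatorial equivalence. Since there are at most $m$ such orbits, the enumeration terminates.

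The main obstacle I expect is the exact tracking of the points of $\mathcal{W}^{\mathrm{true}}$ on the cut line as the SC-representative evolves, because the combinatorial rectangle partition of $S$ changes after each folding. The remedy is to maintain all relevant lengths as elements of the algebraic extension $\mathbb{Q}[\lambda]$, so that comparisons are exact, and to update the finite candidate set by pulling the true singularities backwards along unstable segments through the current partition; exactness of the arithmetic together with the finite-closure property of theorem \ref{th:4.9} guarantees that the full algorithm, cycle by cycle, halts.
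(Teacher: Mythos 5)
Your proposal is correct and its core ideas line up with the paper's. The forward direction of completeness is exactly the paper's argument: $h^*\mathcal{E}$ is an efficient representative of the conjugate class sharing the combinatorial pair $(\Gamma,\Psi)$, and the SC-conditions are combinatorial. The converse is the adaptation of proposition 5.2 to two conjugate mapping classes — the paper simply asserts this direction without detail, and the construction you give via $\widetilde{C}$, $\mathcal{IN}$ and $\mathcal{IN}'$ is the argument it has in mind. For computability the paper defers to section 6, and there your route differs in flavour rather than substance: you propose reading an admissible cut length vector from lemma 3.2 and building $\mathcal{E}_0$ metrically via proposition 3.3, keeping all lengths in $\mathbb{Q}[\lambda]$ to make the tracking of $\mathcal{W}^{\mathrm{true}}$ exact, whereas the paper works purely combinatorially, constructing SC-representatives by the splitting (6.3) and gluing (6.1) operations and realising each elementary SC-folding as a uniformly bounded sequence of standard graph foldings (6.2), so that the geometric obstacle you flag never arises — only the graph, the map, and the Markov coding are manipulated. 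Both routes terminate for the same reason, theorem \ref{th:4.9}, so your argument is sound; the paper's version buys a cleaner, length-free implementation, while yours makes the link between admissible cut lengths and SC-representatives (lemma 3.4, proposition 3.5) do the work instead.
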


\begin{proof} If $[f]$ and $[g]$ are conjugate then the sets $\mathcal{SC}(f)$  and $\mathcal{SC}(g)$  are the same. Indeed if $g = h\circ f \circ h^{-1}$ and 
$\mathcal{E}$ is a SC-representative of $[f]$ then 
$h^*\mathcal{E}$ is a SC-representative of $[g]$ and $h^*\mathcal{E}$ is combinatorially equivalent to $\mathcal{E}$, thus 
$\mathcal{SC}(f) \subset \mathcal{SC}(g)$ and by symmetry 
$\mathcal{SC}(g) \subset \mathcal{SC}(f)$.

Conversely if $\mathcal{SC}(f) = \mathcal{SC}(g)$ then $[f]$ and $[g]$ are conjugate. 
The computability of the cycle structure is given in the next section.
\end{proof}

{\em Remark.} In Los \cite{lo96} the principle for solving the conjugacy problem (for irreducible free group outer automorphisms) was similar but using the whole set of combinatorial efficient representatives as a complete invariant rather than the small subset of SC-representatives. The computability was stated as a "connectedness" property saying that any two efficient representatives are connected by a sequence of folding, collapsing or the converse of a folding.  Here the set of complete invariant is much smaller and describing each cycle is quite simple since only foldings are necessary and in a specific ordering. Furthermore there is a strong connectedness result, that will appear in the next section. It  states the existence of one combinatorial efficient representative, say $(\Gamma_0, \Psi_0)$, that is not in general a SC-representative so that, for each element $\mathcal{E} \in \mathcal{SC}(f)$, $(\Gamma_0, \Psi_0)$ is connected to $\mathcal{E}$ by a well defined sequence of folding operations.

\section{Algorithmic aspects.}

From theorem \ref{th:5.1} and propositions 5.3--5.7 we know how to solve the roots and the symmetry problems. The goal is to obtain an effective algorithmic solution, it remains to show that all the quantities described in the previous paragraphs are computable.
First we know from Bestvina and Handel \cite{BH95} that an efficient representative is obtained after a finite algorithm, called the {\em train track algorithm}, starting from any topological representative of the mapping class $[f]$. 
The train track algorithm provides also a way to check whether $[f]$ is a pseudo-Anosov class and admits a marked fixed point.

The initial data here is an efficient representative $\mathcal{E }_0 = (\Gamma, \Psi, h_{\Gamma} )$  of $[f]$.

In order to use theorem \ref{th:5.1} and the propositions 5.3--5.7, several new transformations are necessary to describe completely the cycles of SC-representatives.
To this end we describe how to perform the following steps:\\
- Transform any efficient representative into a SC-representative, relative to a separatrix $u_j$.\\
- Describe one cycle $\mathcal{C} (u_j)$.\\
- Describe all the cycles $\mathcal{C}$.\\
- Compare the cycles.\\
The first step is a preliminary algorithm that requires a new operation: {\em glueing a $\sigma$-orbit}. The second step only requires the SC-folding operations, but we have to define this operation combinatorially rather than metrically as in section 4. The third step needs a transformation from one cycle  $\mathcal{C} (u_j)$ to another cycle $\mathcal{C} (u_k)$ and this requires also a new operation, called {\em splitting an infinitesimal turn}.
The final step requires checking the combinatorial equivalence on the one hand and the length on the other hand. The combinatorial equivalence is easy to check (in principle). For the length we define a "based efficient representative" that is not a SC-representative but has the property that all cycles are obtained from this single "based point" only by glueing and folding operations.

\subsection{Glueing a $\sigma$-orbit of illegal turns.}
The glueing operation introduced in this paragraph is new. The principle is quite simple: starting from an efficient representative 
$\mathcal{E } = (\Gamma, \Psi, h_{\Gamma} )$  of $[f]$ we want to glue an orbit of periodic Nielsen paths in order to find a SC-representative.
A priori $\mathcal{E }$ has several orbits of Nielsen paths associated to orbits of illegal turns, some of these turns are tangencies of order one. The goal is to transform all of these orbits but one into an orbit of infinitesimal turns.

We assume first that a length function is given on $\Gamma$ as in section 2.2. The cut lengths $( l_1, ..., l_m)$ associated to $\mathcal{E }$ are given by the map $\mathcal{L}_{\mathcal{E }}$ of lemma 2.2 from the orbits of periodic Nielsen paths. We denote by $\sigma$ the permutation of the indexes of the periodic Nielsen paths (under $\Psi$ ) as well as the permutation (under $f$) of the corresponding indexes of the separatrices at the fixed marked point $x_0$. We denote by $\mathcal{O}$ one $\sigma$-orbit for which $l_i > 0$ and we assume that $\sigma$ is not transitive.

{\verb"Step 1."} The first combinatorial step is to find  the  periodic orbits on the boundary, as well as all the associated periodic Nielsen paths $N_{t_i}$. Those are in a one-to-one correspondence with the illegal turns $t_i$, as described in paragraph 2.2.3. 
The Nielsen path $N_{t_i}$ is considered either as a path between two consecutive boundary periodic points along the boundary 
$\partial \mathcal{R}^{\mathcal{E }}$ and passing through the cusp $C_{t_i}$ associated to the illegal turn $t_i$ or as the corresponding edge path in $\Gamma$. Recall also that the Nielsen path is the concatenation $N_{t_i} = A_{t_i}^{-1} B_{t_i}$, where $A_{t_i}$ and $ B_{t_i}$ are the paths starting at the cusp $C_{t_i}$ toward the boundary periodic points along $\partial \mathcal{R}^{\mathcal{E }}$. We assume that $\mathcal{E }$ has more than one orbit of Nielsen paths.

We subdivide the two paths $A_{t_i}$ and $ B_{t_i}$ at a finite collection of points $x \in D(t_i) \subset \Gamma$. The two paths $A_{t_i}$ and $ B_{t_i}$ have the same length, by definition. A point $x\in A_{t_i}\bigcup B_{t_i}$ belongs to $D(t_i)$ either if $x$ is a vertex or if the distance from 
$x$ to $t_i$ along $A_{t_i}$ (resp. $ B_{t_i}$)  is the same than the distance from a vertex $y$ to $t_i$ along $B_{t_i}$ (resp. $ A_{t_i}$). The boundary periodic points at the end of $A_{t_i}$  and $ B_{t_i}$ also belong to $D(t_i)$. This metric description of the set $D(t_i)$ is replaced in practise by an easy combinatorial description using the map $\Psi$ and it's iterates.

{\verb"Step 2."} We subdivide $\Gamma$ at each point $x \in D(t_i)$
for all $t_i$ in one $\sigma$-orbit $\mathcal{O}$. The resulting graph is denoted $\Gamma_s$.

{\verb"Step 3."} The map $\Psi: \Gamma \rightarrow \Gamma$ induces a well defined map  $\Psi_s: \Gamma_s \rightarrow \Gamma_s$.
By definition of the subdivision we obtain:

\begin{lem}
The map $\Psi_s: \Gamma_s \rightarrow \Gamma_s$ satisfies the following property: for every illegal turn $t_i$ in the $\sigma$-orbit $\mathcal{O}$ and each edge $e_{\alpha}$ on the path $ A_{t_i}$, there is an edge $e_{\beta}$ on the path 
$ B_{t_i}$, at the same distance from $t_i$ so that $\Psi_s (e_{\alpha}) $ and 
$\Psi_s (e_{\beta}) $ are either equal or are subarcs $e'_{\alpha}$, $e'_{\beta}$ along $ A_{t_{\sigma (i)}}$ and $ B_{t_{\sigma (i)}}$ at the same distance from the illegal turn $t_{\sigma (i)}$. $\square$
\end{lem}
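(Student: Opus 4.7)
The plan is to exploit the mirror symmetry built into the definition of $D(t_i)$ combined with the $\Psi$-equivariance of the Nielsen paths in the orbit $\mathcal{O}$.

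First I would observe that, by the very definition of $D(t_i)$, after subdivision the paths $A_{t_i}$ and $B_{t_i}$ become combinatorially symmetric with respect to the cusp $t_i$: the edges on the two sides pair up, each pair lying at the same $\mu^s$-distance interval $[d_1,d_2]$ from $t_i$. This is immediate from the rule that a subdivision point is placed at every vertex and at the symmetric location on the opposite side. Given $e_\alpha$ on $A_{t_i}$ at distances $[d_1,d_2]$ from $t_i$, this uniquely defines $e_\beta$ on $B_{t_i}$.

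Next I would establish that $\Psi$ carries this symmetric structure at $t_i$ onto the symmetric structure at $t_{\sigma(i)}$. The key inputs are from paragraph~2.2.3: the cusps are permuted by $\sigma$, and the induced map along $\partial\mathcal{R}^{\mathcal{E}}$ is a uniform expansion of factor $\lambda$ in the $\mu^s$-metric. Hence a point at distance $d$ from $t_i$ on $A_{t_i}$ is sent to the point at distance $\lambda d$ from $t_{\sigma(i)}$ on $A_{t_{\sigma(i)}}$, and similarly on the $B$-sides. The point needing careful verification is that subdivision vertices of $D(t_i)$ are sent to subdivision vertices of $D(t_{\sigma(i)})$: for an original vertex of $\Gamma$ on $A_{t_i}$ this is automatic since $\Psi$ maps vertices to vertices; for a mirror point on $A_{t_i}$ added because of a vertex $y$ of $\Gamma$ on $B_{t_i}$, its image at distance $\lambda d$ on $A_{t_{\sigma(i)}}$ is precisely the mirror of $\Psi(y)$, which is a vertex of $\Gamma$ on $B_{t_{\sigma(i)}}$, hence triggers the mirror rule in $D(t_{\sigma(i)})$. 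Once this invariance is in place, $\Psi_s$ is a well defined combinatorial map on the Nielsen-path edges of the orbit.

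Combining the two steps, $\Psi_s(e_\alpha)$ is necessarily an edge path of $\Gamma_s$ lying on $A_{t_{\sigma(i)}}$ with endpoints at distances $\lambda d_1$ and $\lambda d_2$ from $t_{\sigma(i)}$, and $\Psi_s(e_\beta)$ lies on $B_{t_{\sigma(i)}}$ at the very same distance interval; these are the claimed subarcs $e'_\alpha$ and $e'_\beta$. The alternative ``equal'' case occurs precisely when $e_\alpha$ and $e_\beta$ are the first edges at the cusp $t_i$ and the initial letters of $\Psi(e_\alpha)$ and $\Psi(e_\beta)$ already coincide (the order-one illegal turn situation): in that case the two images collapse to a common edge path in $\Gamma_s$.

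I expect the main obstacle to be the bookkeeping in the second step, namely verifying that the mirror subdivision is genuinely $\Psi$-invariant around the whole $\sigma$-orbit. This reduces to the observation that the Nielsen paths $N_{t_j}$ for $j\in\mathcal{O}$ carry the same combinatorial length data up to scaling by $\lambda$, so that the isometric matching between the two sides at $t_i$ is preserved under the rescaling that $\Psi$ performs along the boundary. With this check established, the symmetric statement of the lemma follows directly.
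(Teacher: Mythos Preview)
Your argument is correct and is exactly the unpacking of what the paper means by ``by definition of the subdivision we obtain'': the paper gives no proof beyond that phrase and the terminal $\square$, so you are supplying precisely the verification the authors leave implicit. The two ingredients you isolate---the mirror symmetry of $D(t_i)$ and the $\lambda$-uniform action of $\Psi$ on the boundary carrying the Nielsen structure at $t_i$ to that at $t_{\sigma(i)}$---are the whole content.

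One small refinement: your description of the ``equal'' case as occurring only for the \emph{first} edges at the cusp is slightly too narrow. At an order-one illegal turn, $\Psi(A_{t_i})$ and $\Psi(B_{t_i})$ share a common initial prefix $W$ (whose terminal point is the vertex of $t_{\sigma(i)}$), and the ``equal'' alternative applies to every pair $(e_\alpha,e_\beta)$ whose images lie entirely inside $W$, which may comprise several subdivided edges rather than one. This does not affect your argument, since in either sub-case the conclusion needed for Step~4 (that paired edges have images which are equal or again paired) holds. It is worth noting that if an edge of $\Gamma_s$ happens to straddle the $\Psi$-preimage of $t_{\sigma(i)}$, its image decomposes as a common piece in $W$ followed by mirror pieces on $A_{t_{\sigma(i)}}$ and $B_{t_{\sigma(i)}}$; the quotient map $g$ of Step~4 still identifies the two images, so the lemma serves its purpose regardless.
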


{\verb"Step 4."} Let $\mathcal{E }_s = (\Gamma_s, \Psi_s, h_{\Gamma_s} )$
be the above subdivided efficient representative of $[f]$. 
We define a new graph $\Gamma '$ by identifying all the pairs 
$(e_{\alpha}, e_{\beta})$ of lemma 6.1. The embedding 
$ h_{\Gamma_s} $ induces a well defined embedding $ h_{\Gamma '} $.
Let  $g: \Gamma_s \rightarrow \Gamma '$ denote the quotient map.
We observe,  by lemma 6.1, that if $g(e) = g(e')$ for two edges $(e, e')$ of $\Gamma_s$ then either $\Psi_s (e) = \Psi_s (e')$ or 
$ g( \Psi_s (e)  ) = g(  \Psi_s (e') )$. Therefore there is a well defined map 
$\Psi ': \Gamma ' \rightarrow \Gamma '$ so that:
$ \Psi ' \circ g = g \circ \Psi _s$.

\begin{proposition}
The triple $\mathcal{E }' = (\Gamma ', \Psi ', h_{\Gamma '} )$ is an efficient representative of $[f]$, obtained from $\mathcal{E }$ by {\em glueing } the $\sigma$-orbit $\mathcal{O}$. The cut length 
$l_{\mathcal{E }'} = (l'_1, ..., l'_m)$ satisfies $l'_i = 0$ if $i \in \mathcal{O}$
and $l'_i = l_i$ if $i \notin \mathcal{O}$.
\end{proposition}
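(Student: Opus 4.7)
The plan is to verify conditions (G1)--(G5) for $\mathcal{E}'$ in order, then read off the cut length vector from the construction. The key observation is that the combinatorial gluing of Step 4 mirrors, at the graph level, the geometric identification $\mathcal{IN}$ applied selectively to the periodic Nielsen paths of the orbit $\mathcal{O}$: for those turns the paths $A_{t_i}$ and $B_{t_i}$ are identified pairwise and collapse to a (now infinitesimal) path, while the Nielsen paths of the other orbits are preserved verbatim.

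First I would check that $\mathcal{E}'$ is a topological representative. The embedding $h_{\Gamma'}$ is obtained by descending $h_{\Gamma_s}$ through the quotient $g$; Lemma 6.1 ensures that each identified pair $(e_\alpha,e_\beta)$ projects onto the two sides of the same stable separatrix segment inside $\hat S_X$, so $h_{\Gamma'}$ remains an embedding into $S_X$ up to isotopy, and the cyclic ordering at each new vertex is the one inherited from $\Gamma_s$. Homotopy equivalence survives because each loop created by an identification is, up to homotopy in $\hat S_X$, a Nielsen loop around a cusp $C_{t_i}$ with $t_i\in\mathcal{O}$, hence bounds a disc in the regular neighborhood (it encircles no marked point). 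The graph $\Gamma'$ has no valence-one vertex since identification only merges edges without deleting any. Finally, the defining relation $\Psi'\circ g = g\circ\Psi_s$, combined with $h_{\Gamma'}\circ g\simeq h_{\Gamma_s}$ and the fact that $(\Gamma_s,\Psi_s,h_{\Gamma_s})$ represents $f$, yields $f\circ h_{\Gamma'}\simeq h_{\Gamma'}\circ\Psi'$.

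Next I would verify the efficient conditions. (G2) transports along $g$: any backtracking subword in ${\Psi'}^k(e')$ would lift, through any choice of preimage, to a backtracking subword for $\Psi_s^k$, contradicting (G2) for $\mathcal{E}$. Conditions (G3) on peripheral edges and the block form of the incidence matrix are preserved because the gluing only touches edges inside the $H$-block (by construction of the set $D(t_i)$, which lies on the $A_{t_i}, B_{t_i}$ paths, none of which is peripheral). Condition (G4) follows from the surjectivity of $g$ on edges together with $\Psi'\circ g = g\circ\Psi_s$: any edge of $\Gamma'$ is the $g$-image of some edge, and the irreducibility of $M_H(\mathcal{E})$ descends to irreducibility of $M_H(\mathcal{E}')$. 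For (G5), each illegal turn $t_i$ with $i\in\mathcal{O}$ has its two outgoing half-edges identified pairwise along $A_{t_i}$ and $B_{t_i}$, so $t_i$ becomes an infinitesimal turn of $\mathcal{E}'$ (or disappears entirely when $|A_{t_i}|=|B_{t_i}|=0$); the illegal turns of the other orbits are unaffected and still satisfy (G5) by inheritance from $\mathcal{E}$.

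The cut length statement now follows by applying $\mathcal{L}_{\mathcal{E}'}$ of Lemma 2.2. For $i\notin\mathcal{O}$ the illegal turn $t_i$, its two adjacent boundary periodic points and the associated Nielsen path $N_{t_i}$ are untouched by the gluing, so $l'_i = l_i$. For $i\in\mathcal{O}$, the edges composing $A_{t_i}$ and $B_{t_i}$ have been identified, so $N_{t_i}$ projects to a degenerate (infinitesimal) path in $\Gamma'$; its length is zero and $\mathcal{L}_{\mathcal{E}'}$ assigns $l'_i=0$ by its definition at infinitesimal or vanished turns. The main obstacle will be the homotopy equivalence in the first paragraph: one must argue carefully that identifying an entire $\sigma$-orbit of Nielsen paths does not collapse a nontrivial loop of $S_X$. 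This is precisely where the assumption that $\sigma$ is not transitive enters, since at least one other orbit of illegal turns must remain to preserve the boundary combinatorics of $\hat S_X$; every identified loop then factors through a disc in the regular neighborhood of the corresponding cusp, so the quotient stays homotopy equivalent to $S_X$.
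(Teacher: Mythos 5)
Your proposal is essentially correct and amounts to a much more detailed verification than the paper supplies. The paper's own proof is two sentences long: it asserts efficiency is clear because \emph{no new illegal turn has been created}, and reads off the cut lengths because the Nielsen paths for $\mathcal{O}$ become infinitesimal-turn orbits while the lengths of the others are unaffected. You instead walk through (G1)--(G5) one by one and give a topological explanation for why the quotient preserves the homotopy type; this is a legitimately different and more self-contained route, and it makes explicit the role of the non-transitivity of $\sigma$ (so that at least one cut line survives and $x_0$ remains a genuine marked point), which the paper leaves implicit.

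One point you should tighten. You assert that for $i\notin\mathcal{O}$ ``the illegal turn $t_i$, its two adjacent boundary periodic points and the associated Nielsen path $N_{t_i}$ are untouched by the gluing.'' That is stronger than what holds: the Nielsen paths of different orbits can share edges of $\Gamma$ after retraction, so identifying edges along $A_{t_i}, B_{t_i}$ for $t_i\in\mathcal{O}$ can change the edge-path expression of $N_{t_j}$ for $j\notin\mathcal{O}$. The paper says this explicitly (``the length of the other periodic Nielsen paths has not been affected, although the paths themselves might be very different''). The correct invariant is the $\mu^s$-length, which is unchanged because the gluing only shrinks the cut lines indexed by $\mathcal{O}$ to length zero and leaves the remaining cut lines, hence the other half-Nielsen-paths $A_{t_j}, B_{t_j}$ measured along $\partial\mathcal{R}^{\mathcal{E}}$, metrically intact. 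So replace ``the paths are untouched, hence the lengths are unchanged'' with ``the lengths are unchanged, even though the paths may change.'' With that adjustment your argument is sound.
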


The fact that $\mathcal{E }'$  is efficient is clear from the construction. Indeed no new illegal turn has been created.  The properties of the cut length is also clear: the periodic Nielsen paths corresponding to the orbit $\mathcal{O}$ has been replaced by a periodic orbit of infinitesimal turns, therefore of length zero.
The length of the other periodic Nielsen paths has not been affected, although the paths themselves might be very different. $\square$

In practise, this operation can be performed as a sequence of (classical) folding operations of illegal turns $t_i$ that belong to the orbit $\mathcal{O}$.

\subsection{Elementary SC-folding.}

Let us describe the SC-folding operation in a combinatorial setting. We assume that a SC-representative with respect to a separatrix, say $u_1$, is given.
Let $t_1 = (a,b)$ be the illegal turn of order one that is associated with $u_1$. We consider as above the Nielsen path
$N_{t_1} = A_{t_1}^{-1} B_{t_1}$.
Along one of the paths $A_{t_1}$ or $B_{t_1}$, there is a first vertex $v_1$ with at least 3 gates by lemma 4.3 and the construction of paragraph 2.2. Let  $l_0$ denote the length of the arc $[t_1, v_1]$ along $A_{t_1}$ or $B_{t_1}$.
We apply a first sequence of standard folding operations, as described by Bestvina and Handel \cite{BH95}, until the illegal turn of order one associated to $u_1$ is based at the corresponding vertex with at least 3 gates. The number of such operations is uniformly bounded, in term of the combinatorial length of $A_{t_1}$ and $B_{t_1}$. We obtain an efficient representative $\mathcal{E}'$ which, in general, is not a SC-representative.
Indeed, if $\sigma \neq id$ then the turn $t_{\sigma ^{-1} (1)} \neq t_1$ associated to 
$f^{-1} (u_1) \neq u_1$ is now of order one.
We apply next a sequence of standard foldings at 
$t_{\sigma ^{-1} (1)}$ until it becomes of order greater than one and we iterate the sequence for 
$t_{\sigma ^{-p} (1)}, p = 2, ..., q-1$, until all of these turns are of order greater than one.
Each of these sequences at $t_{\sigma ^{-p} (1)}$ has a uniformly bounded number of elementary folding operations.
Observe that the above sequence of foldings is uniquely defined by the initial SC-representative. 
The resulting efficient representative satisfies:

\begin{lem}
The above sequence of standard folding operations:
 $\mathcal{E} \rightarrow \mathcal{E}' .... \rightarrow \mathcal{E}^{(n')} = \mathcal{E}^{1}$ results in a SC-representative relative to $u_1$.  The number of standard folding operations in this sequence is uniformly bounded. The global operation 
 $\mathcal{E} \rightarrow \mathcal{E}^{1}$ is the elementary SC-folding at $\mathcal{E}$ of length $l_0$.$\square$
 \end{lem}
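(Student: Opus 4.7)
The plan is to match, sub-sequence by sub-sequence, the combinatorial foldings against their metric effect on the cut lengths, and then to check the SC axioms for the output. First I would show that the initial sub-sequence of foldings at $t_1$ identifies a common initial segment of $A_{t_1}$ and $B_{t_1}$ of $\mu^s$-length exactly $l_0$. The stopping rule is the first vertex along $A_{t_1}$ (or $B_{t_1}$) with at least three gates; by Lemma 4.3 together with the construction of Section 2.2, this vertex corresponds to the first point of $\mathcal{W}^{true}$ encountered along $[x_0, u_{i_0}(l_{i_0})]_{u_{i_0}}$ starting from $u_{i_0}(l_{i_0})$, which by Definition 4.4 is at metric distance $l_0$. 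Hence after this sub-sequence the cut length along $u_1 = u_{i_0}$ has decreased from $l_{i_0}$ to $l_{i_0} - l_0$, and $t_1$ now sits at a vertex with at least three gates.

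Next, I would iterate along the $\sigma$-orbit. The dynamical relation $\Psi(t_{\sigma^{-1}(1)}) = t_1$, together with the fact that $t_1$ was illegal of order one before the previous sub-sequence, forces $t_{\sigma^{-1}(1)}$ to become illegal of order one in the intermediate graph: the two edges at $t_{\sigma^{-1}(1)}$ now have $\Psi$-images sharing the freshly identified initial edge at $t_1$. I would then apply standard foldings at $t_{\sigma^{-1}(1)}$ to remove this common initial segment, stopping exactly when the coincidence disappears. Because $\Psi$ expands $\mu^s$-lengths by $\lambda$, the metric length removed at $t_{\sigma^{-1}(1)}$ equals $\lambda^{-1} l_0$. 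The same argument applied inductively for $p = 2, \ldots, q-1$ gives that each cut length $l_{\sigma^{-p}(i_0)}$ decreases by $\lambda^{-p} l_0$, matching Definition 4.1. By Proposition 4.2 the resulting cut length vector is still admissible, and the bijection of Lemma 3.4 and Proposition 3.5 identifies $\mathcal{E}^1$ as the SC-representative associated to this admissible cut length vector.

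The main technical point is to verify the SC axioms for $\mathcal{E}^1$. For (SC1), I would check that the only surviving illegal turn of order one is the new $t_1$: all $t_{\sigma^{-p}(1)}$ with $1 \leq p \leq q-1$ have been folded until they are of order strictly greater than one, while no turn outside the $\sigma$-orbit of $t_1$ has its order changed, since the Bestvina and Handel folding operations are local and modify only edges incident at the folded turn. The new $t_1$ remains of order one since $l_{i_0} - l_0 > 0$, and by the stopping rule of the first step it sits at a vertex with at least three gates between two $H$-edges. Condition (SC2) is inherited from $\mathcal{E}$ since all foldings take place at turns between $H$-edges, leaving the subgraph $P \cup PreP$ and the vertices crossed by the peripheral loops $b_i$ untouched. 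The uniform bound on the number of standard foldings follows since each sub-sequence at $t_{\sigma^{-p}(1)}$ identifies an initial segment of combinatorial length at most that of $A_{t_{\sigma^{-p}(1)}}$ in $\mathcal{E}$; summing over $p = 0, \ldots, q-1$ gives a bound in terms of combinatorial invariants of the initial representative.
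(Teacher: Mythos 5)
Your proof is correct and follows essentially the same route the paper leaves implicit: the paper states the lemma with a terminal $\square$, treating the preceding construction of the folding sequence as the proof, and you make the two missing verifications explicit — (a) that the combinatorial stopping rule matches the metric quantity $l_0$ from Definition 4.4 and propagates along the $\sigma$-orbit with the factor $\lambda^{-p}$, and (b) that the SC axioms hold for the output.

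One remark about the justification in your third paragraph. The claim that ``no turn outside the $\sigma$-orbit of $t_1$ has its order changed, since the Bestvina--Handel folding operations are local'' is not the right reason and, taken literally, is false: a folding modifies the images of \emph{every} edge whose $\Psi$-image passes through the folded turn, so it is a global operation on the map. What actually makes the verification of (SC1) painless is that a SC-representative has \emph{no} illegal turns outside the orbit $\mathcal{O}$ of $t_1$ in the first place — this is part of the bijection with admissible cut lengths (Lemma 3.4, Proposition 3.5), since the only positive cut lengths sit on $\mathcal{O}$ and illegal turns are in bijection with positive cut lengths. So there is nothing outside $\mathcal{O}$ to control. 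You in fact already have this in hand from the second paragraph, where you establish that the resulting cut-length vector is admissible and invoke the bijection: that argument delivers (SC1) and (SC2) for free, making the direct check in your third paragraph redundant as well as slightly misjustified. Trimming the third paragraph to only the claim about $t_1$ remaining of order one and keeping $l_{i_0}-l_0>0$, and the bound on the number of foldings, would leave a cleaner proof that matches the spirit of the paper.
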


\subsection{Splitting an orbit of infinitesimal turns.}

In this paragraph we define a transformation that can be applied when an efficient representative $\mathcal{E}$ has an orbit $\mathcal{O}$ of separatrices whose cut length is zero. It only makes sense if the permutation $\sigma$ is not transitive. The goal is to obtain another efficient representative for which the orbit $\mathcal{O}$ has non zero cut length.

From lemma 2.2 the zero cut length property arises when a vertex $v_1$ with more than 3 gates is a boundary periodic point. In this case the separatrices of the orbit $\mathcal{O}$ are in bijection with some infinitesimal turns obtained by the train track construction of paragraph 2.2.2 at the orbit of $v_1$. 
In what follows we will either discuss the efficient representative $\mathcal{E}$ or the corresponding train track map $(\widetilde{\Psi}, \tau) (\mathcal{E})$. The zero cut length situation corresponds for $\mathcal{E}$ to a periodic orbit of vertices $ v_1, ..., v_{q}$ with more than 3 gates.
For the train track map $(\widetilde{\Psi}, \tau) (\mathcal{E})$ there is also a periodic orbit of vertices $ w_1, ..., w_{q}$ with exactly 3 gates. At each $w_i$ there is an infinitesimal turn $t_i^{\epsilon} = (\epsilon_i^{L} , \epsilon_i^{R})$
(L /R stands for left and right) on one side and one gate 
$g_i = \{a_i^1, ..., a_i^k\}$ on the other side.

{\verb"Step 1: preliminary foldings."} 
If each gate $g_i$ is reduced to a single edge then $w_i$ has valency 3 and the next transformations are easier to apply. 
If some of the gates has more than one edge then we apply some foldings leading to the previous simple situation with valency 3 at each vertex.
The periodicity assumption on the infinitesimal turns $t_i^{\epsilon}$ implies that one edge of the gates $g_i$ has periodic initial segment under 
$\widetilde{\Psi}$ (the same is also true for $\Psi$), ie:\\
 $\widetilde{\Psi} (a_1^{j_1}) = a_2^{j_2}M_1,  \widetilde{\Psi} (a_2^{j_2}) = a_3^{j_3}M_2 , ..., \widetilde{\Psi} (a_q^{j_q}) = a_1^{j_1}M_q $,\\ 
where the $a_i^{j_i}$ are oriented from the $w_i$ see figure 7.
Some of the edge paths $M_i$ might be trivial but not all of them since 
$\widetilde{\Psi}$ is a strict dilatation on each non infinitesimal edge.\\
We choose one gate (with more than one edge), say $g_1$, and we consider the first index $i$, in the above sequence ordered by the orbit, for which $M_i$ is non trivial. Let $v$ be the initial vertex of $M_i$,  we subdivide $a_i^{j_i}, a_{i-1}^{j_{i-1}} , . ...a_1^{j_1}$ at the preimages of $v$. This defines vertices $ v'_i, ..., v'_1$ of valency 2 on 
$a_i^{j_i},. ...a_1^{j_1}$. Then we apply a standard folding of all the edges in the gate $g_1$, up to $v'_1$ and then at all the preimage gates. Note that several other choices of subdivisions and preliminary folding operations are possible here. Our choice is made to obtain a periodic orbit of valency 3 vertices.

After this sequence of preliminary foldings we obtain a new train track map 
$(\widetilde{\Psi '}, \tau ')$ and a new efficient representative $\mathcal{E '}$
so that the gates $g'_i$ corresponding to $g_i$ are all reduced to a single edge $a'_i$.
The infinitesimal turns  $t_i^{\epsilon} = (\epsilon _i^{L} , \epsilon _i^{R})$
has not been affected by these foldings and are still periodic, ie: 
$(\epsilon _1^{L} , \epsilon _1^{R}) \rightarrow (\epsilon _2^{L} , \epsilon _2^{R}) ... \rightarrow (\epsilon _q^{L} , \epsilon _q^{R})$, under $\widetilde{\Psi '}$.\\
Similarly the edges $a'_i$ have periodic initial segments:
$\widetilde{\Psi '} (a'_1) = a'_2 M'_1,  ..., \widetilde{\Psi '} (a '_q) = a '_1M'_q $.

{\verb"Step 2: splitting."} 
The topological idea is to ``cut along $a'_1$ ''. In practise, in the simple situation obtained after the preliminary foldings, ie when all the vertices 
$w_1, ..., w_q$ have valency 3, this operation is nothing else than a ``collapsing".
Let $\tau ''$ be the graph obtained from $\tau'$ by collapsing the edge $a'_1$ and we assume that the edges of $\tau''$ keeps the same name and orientation
than in  $\tau'$.
This graph is embedded in the surface and the embedding $h_{\tau ''}$ is induced by the embedding $h_{\tau '}$.
We define a new map $\widetilde{\Psi ''}$ on $\tau ''$ as follows:

 $\widetilde{\Psi ''} (\epsilon _1^{L}) = ( \epsilon _2^{L} a'_2 M'_1)^*$,
$\widetilde{\Psi ''} (\epsilon _1^{R}) = ( \epsilon _2^{R} a'_2 M'_1)^*$ and
$\widetilde{\Psi ''} (e) = (\widetilde{\Psi '} (e))^*$, for all other edges,

where  $(W)^*$ is the word, representing an edge path in $\tau''$, obtained from the word $W$ representing an edge path in $\tau'$ by removing all occurrences of the letter $a'_1$. The labelling and the orientation of the edges are given in figure 7.
Let us make a couple of observations:\\
- The edges  $\epsilon _i^{L/R}, i = 1, ...,q$ are not infinitesimal anymore.\\
- All turns $(\epsilon _i^{L}, \epsilon _i^{R})$ are now illegal and 
$(\epsilon _1^{L}, \epsilon _1^{R})$ is of order one.\\
- The new map $ \widetilde{\Psi ''}: \tau'' \rightarrow \tau''$ is a train track map.\\
Indeed no edge $e$ of $\tau''$ is mapped across any of the new illegal turns 
$(\epsilon _i^{L}, \epsilon _i^{R})$.

\begin{figure}[htb]
\centerline {\includegraphics[scale=0.5]{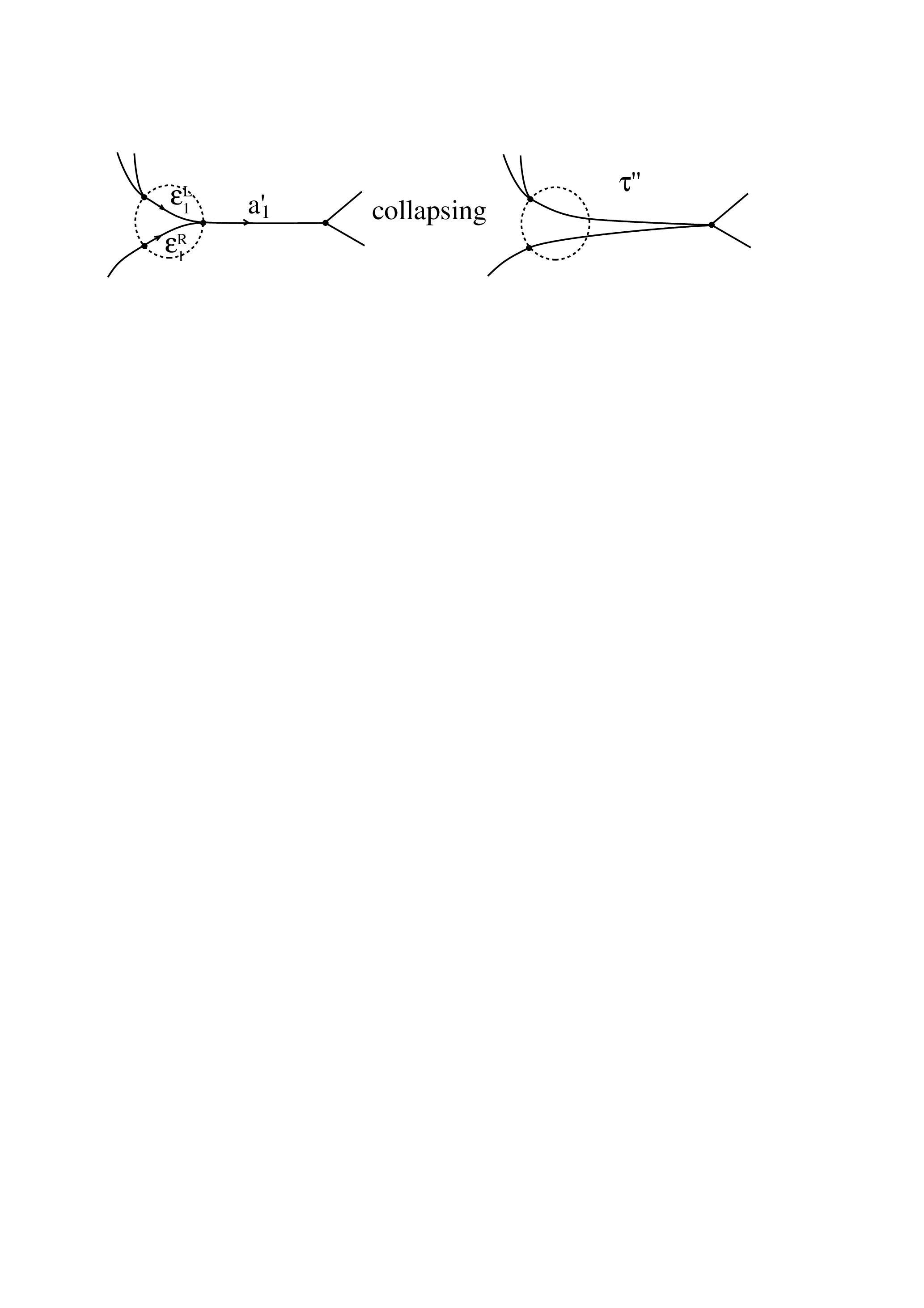}}
\label{fig:7}
\caption{ Splitting at a separatrix.}
\end{figure}

The other orbits of infinitesimal edges of $\tau'$ remains infinitesimal for 
$ (\tau'' , \widetilde{\Psi ''})$. Collapsing the infinitesimal edges of $\tau''$
that bounds $k$-gones or $k$-gones with one side missing to a point leads to an efficient representative $\mathcal{E}'' = (\Gamma'', \Psi'', h_{\Gamma''})$
so that the construction of paragraph 2.2, applied to $\mathcal{E}''$ leads back to $ (\tau'' , \widetilde{\Psi ''})$.

\begin{proposition}
The operation that transforms $\mathcal{E}$ to the efficient representative 
$\mathcal{E}''$ is called a {\em splitting at the separatrix $u_1$}, where $u_1$
corresponds to the infinitesimal turn $(\epsilon _1^{L}, \epsilon _1^{R})$
of  $ (\tau , \widetilde{\Psi }) (\mathcal{E})$, under the map $u_{\mathcal{E}}$ of lemma 2.2.
In addition the cut length $\mathcal{L}_{\mathcal{E}''}$ is non zero for all separatrices for which $\mathcal{L}_{\mathcal{E}}$ is non zero and the separatrices in the orbit $\mathcal{O}$ of $u_1$.
\end{proposition}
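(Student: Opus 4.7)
The plan is to verify in three stages that the triple $\mathcal{E}'' = (\Gamma'', \Psi'', h_{\Gamma''})$ produced by the construction is indeed an efficient representative of $[f]$ and that the cut length transforms as claimed.

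First, I would establish that the intermediate object $(\tau'', \widetilde{\Psi''})$ is a well-defined train track map representing $[f]$. The topological picture is that collapsing $a_1'$ and re-routing $\epsilon_1^{L/R}$ across the image $a_2' M_1'$ is the combinatorial analog of cutting the foliated surface along the small unstable leaf segment corresponding to $a_1'$: since $a_1'$ is infinitesimal, collapsing it is a homotopy equivalence of $S_X$, and the asterisk operation on edge paths corresponds exactly to reading off the image paths in the collapsed graph. This already ensures that $h_{\tau''}$ is a homotopy equivalence and that $\widetilde{\Psi''}$ is well-defined and homotopic to $f$ via $(\ast)$ of Section~2.

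Second, I would check conditions (G1)--(G5) for the associated efficient representative $\mathcal{E}''$ obtained by collapsing the remaining infinitesimal components of $\tau''$. Items (G1), (G3), and (G4) follow routinely: $\Gamma''$ inherits from $\Gamma'$ the absence of valence-one vertices, the peripheral subgraph $P$ is left untouched by the splitting, and irreducibility of $M_H$ is preserved because the new $H$-edges $\epsilon_i^{L/R}$ are cyclically permuted up to a common non-infinitesimal correction and the rest of the transition matrix comes from the irreducible $(\tau', \widetilde{\Psi'})$. The subtle point is (G2): I would show that backtracking in $\widetilde{\Psi''}^k(e)$ would force backtracking in $\widetilde{\Psi'}^k(e)$, since removal of all occurrences of the single letter $a_1'$ from reduced words cannot create cancellations of other letters. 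Condition (G5) holds because the three observations in Step~2 identify $(\epsilon_1^L, \epsilon_1^R)$ as an illegal turn of order one whose $\widetilde{\Psi''}$-image begins with the non-infinitesimal edges $\epsilon_2^{L/R}$.

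Third, I would analyze cut lengths via Lemma~2.2. For a separatrix $u_k \notin \mathcal{O}$, the associated periodic Nielsen path is unchanged combinatorially by the splitting (the affected orbit is disjoint from it), so $\mathcal{L}_{\mathcal{E}''}(u_k) = \mathcal{L}_{\mathcal{E}}(u_k)$ and remains non-zero if it was non-zero. For a separatrix $u_k \in \mathcal{O}$, the corresponding turn has become an illegal turn based at a vertex of $\tau''$ whose edges $\epsilon_k^{L/R}$ now have positive length given by the Perron--Frobenius eigenvector of the new incidence matrix $M(\Gamma'', \Psi'')$; thus the Nielsen path passing through the cusp is a concatenation of two non-degenerate arcs, yielding $\mathcal{L}_{\mathcal{E}''}(u_k) > 0$.

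The main obstacle I anticipate is the verification that the asterisk-based definition of $\widetilde{\Psi''}$ genuinely yields a train track map on $\tau''$, specifically non-backtracking and the correct identification of the new illegal turns under the bijection $u_{\mathcal{E}''}$ of Lemma~2.2: one has to trace carefully how the boundary periodic points of $\mathcal{R}^{[\mathcal{E}]}$ reorganize after splitting so that the orbit $\mathcal{O}$ indexes precisely the new set of illegal turns and that the orbit structure of every other cusp is preserved.
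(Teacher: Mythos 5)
Your third stage captures the right ideas and essentially recovers the paper's argument: for orbits outside $\mathcal{O}$ the periodic Nielsen paths survive with positive length, and inside $\mathcal{O}$ the new paths $\overline{\epsilon_i^L}\,\epsilon_i^R$ are periodic Nielsen paths of positive length because the $\epsilon_i^{L/R}$ have become $H$-edges. (You do overstate it by asserting $\mathcal{L}_{\mathcal{E}''}(u_k)=\mathcal{L}_{\mathcal{E}}(u_k)$; the paper explicitly warns that the lengths outside $\mathcal{O}$ are generically changed by the preliminary foldings, and in fact the preliminary foldings touch edges that may lie on those other Nielsen paths. Only non-triviality is claimed, and only non-triviality is needed.)

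The real gap is in your first stage, on two points. First, $a_1'$ is \emph{not} infinitesimal: the $a_i'$ are the $H$-edges in the gates $g_i'$, and the paper notes that not all $M_i'$ are trivial precisely because $\widetilde{\Psi}$ strictly dilates non-infinitesimal edges, so the $a_i'$ carry positive eigenvector length. This means you are collapsing a genuine (non-degenerate) rectangle, and the resulting $\tau''$ is \emph{not} obtained from $\tau'$ by a deformation retraction that would make ``reading off the image in the quotient'' automatic. Second, and more seriously, you treat $\widetilde{\Psi''}$ as if it were simply the quotient of $\widetilde{\Psi'}$ under the asterisk. It is not: the quotient rule $e\mapsto(\widetilde{\Psi'}(e))^*$ would send $\epsilon_1^{L}\mapsto\epsilon_2^{L}$ (since $\widetilde{\Psi'}$ merely permutes the infinitesimal edges), whereas the paper prescribes $\epsilon_1^{L}\mapsto(\epsilon_2^{L}a_2'M_1')^*$, i.e.\ a re-routing that inserts $\widetilde{\Psi'}(a_1')$. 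This insertion is the whole content of the cut; it is what makes $(\epsilon_1^L,\epsilon_1^R)$ illegal of order one with $H$-initial segments and what renders the $\epsilon$-edges non-infinitesimal. Because $\widetilde{\Psi''}$ differs from $(\widetilde{\Psi'})^*$ on these edges, iterates of $\widetilde{\Psi''}$ are not asterisk-images of iterates of $\widetilde{\Psi'}$, so your (G2) argument (``backtracking in $\widetilde{\Psi''}^k(e)$ forces backtracking in $\widetilde{\Psi'}^k(e)$'') does not follow as stated; one must separately argue that no edge image crosses the newly illegal turns $(\epsilon_i^L,\epsilon_i^R)$, which is the paper's own one-line justification for the train track property.
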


The fact that $\mathcal{E}''$ is efficient was proved during the construction. The cut length properties come from the the following observations: \\
Each non trivial periodic Nielsen path of $\mathcal{E}$ has a non trivial image in $\mathcal{E}''$, therefore with non zero length. The lengths are a priori different in $\mathcal{E}$ and in $\mathcal{E}''$ because of the preliminary foldings.\\
The paths $\overline{\epsilon _i^{L}}. \epsilon _i^{R}, i = 1, ..., q$ are periodic Nielsen paths in $\mathcal{E}''$ of non zero length which proves the last statement. $\square$

{\em Remark:} A notion that is similar to this {\em splitting at a separatrix} has been defined in the context of free group automorphisms, for instance in  Los and Lustig \cite{LoLu}, such an operation is called  "unfolding Nielsen paths".

\subsection{The roots and symmetry algorithm.}

We have now defined all the necessary tools to make explicit an algorithm that produces all the cycles of SC-representatives relative to one separatrix in each $\sigma$-orbits and to identify combinatorially and/or metrically the a priori different cycles. These algorithms altogether with the results of section 5 give our solution to the roots and the symmetry problems.

{\verb"Step 0: Find one efficient representative."}\\
From the train track algorithm of Bestvina and Handel \cite{BH95} we find one efficient representative 
$\mathcal{E}_0$ for $[f]$.

{\verb"Step 1: Identify the orbits of separatrices."}\\
The map $u_{\mathcal{E}_0}$ of lemma 2.2 defines an identification of the separatrices $\{u_1, ..., u_m \}$ with some of the illegal turns and some of the infinitesimal turns, ie the set $T^0_{\mathcal{E}_0}$. We label these turns with the corresponding label of the separatrices and we compute the permutation $\sigma$, in particular its order $q$.

{\verb"Step 2: Split all orbits of separatrices."}\\
If some $\sigma$-orbit $\mathcal{O}$ has cut length zero then we apply a splitting operation (paragraph 6.3) at $u_i \in \mathcal{O}$.
Observe that the zero cut length property of a $\sigma$-orbit does not require to compute a length function. After splitting all the orbits with zero cut length
we obtain an efficient representative $\mathcal{E}_{base}$ where all orbits $\mathcal{O}$ of $\sigma$ have positive cut length.
This subclass of efficient representatives has been called "principal blow up class" in Los and Lustig \cite{LoLu}. 

We consider this particular representative $\mathcal{E}_{base}$ as a "base point" for the next steps. In particular we  fix on $\mathcal{E}_{base}$ a length function from which all the other length functions will be derived. The length function is (see paragraph 2.2) an eigenvector of the incidence matrix. 

{\verb"Step 3: Find one SC-representative."}\\
Fix one $\sigma$-orbit  $ \mathcal{O}_i$ and then apply the glueing operation of paragraph 6.1 at all the $\sigma$-orbits  $ \mathcal{O}$ except
$\mathcal{O}_i$. We obtain an efficient representative $\mathcal{E}'_{(i)}$ that satisfies,  by proposition 6.2 :
$\mathcal{L}_{\mathcal{E}'_{(i)}}(t_j) \neq 0 $ iff $j\in \mathcal{O}_i$ but is not necessarily  a SC-representative. 

From the proof of lemma 4.3 there is a vertex with at least 3 gates along one of the Nielsen paths that are associated with the orbit of non zero cut length.
 In addition such a vertex $v_0$ exists on a Nielsen path $N_{t}$, where $t$ is illegal of order one.
We apply a sequence of standard folding operations at $t$ up to the vertex $v_0$. Then we fold at the possible other illegal turns of order one in this orbit until all of them are of order greater than one. Property (SC1) of proposition 3.3 is then satisfied. Property (SC2) is obvious to obtain on boundary loops by blowing up a vertex if it has more than 3 gates.\\
After this step 3 we obtain one SC-representative $\mathcal{E}^1_{(i_1)}$, ..., 
$\mathcal{E}^1_{(i_k)}$, relative to $u_{i_1} \in \mathcal{O}_1$, ...,
$u_{i_k} \in \mathcal{O}_k$.

{\em Remark:}  All SC-representatives $\mathcal{E}^1_{(i_j)}$ are obtained from the base point $\mathcal{E}_{base}$ by a sequence of operations that only consists in gluing and folding. All these elementary operations induce a well defined length function.  Therefore all the $\mathcal{E}^1_{(i_j)}$ have a well defined length function (not up to rescaling).

{\verb"Step 4: Find the cycles."}\\
For each $\mathcal{E}^1_{(i_j)}, j = 1...k$ we apply the elementary SC-folding operations of paragraph 6.2: 
$\mathcal{E}^1_{(i_j)} \rightarrow \mathcal{E}^2_{(i_j)}  \rightarrow ...
 \rightarrow \mathcal{E}^{p_j}_{(i_j)}$, up to the first SC-representative that is combinatorially equivalent to the initial SC-representative: $\mathcal{E}^{p_j}_{(i_j)} \simeq \mathcal{E}^1_{(i_j)}$.
At the end of this step 4 we obtain the collection of all the primitive cycles :
$\mathcal{C}'_{u_1}, ..., \mathcal{C}'_{u_k}$, one for each orbit 
$\mathcal{O}_1, ..., \mathcal{O}_k$ of $\sigma$.

{\em Remark.} The combinatorial equivalence 
$(\Gamma, \Psi) \simeq (\Gamma ', \Psi ')$ is easy to verify. First we check that the two graphs are homeomorphic and then that the two maps are identical after the graph homeomorphism that induces a renaming. We just have to be careful that the graph homeomorphism preserves the surface structure, ie that the cyclic ordering at each vertex is preserved.

{\verb"Step 5: Comparisons of cycles."}\\
We check whether two primitive cycles $\mathcal{C}'_{u_i} \simeq \mathcal{C}'_{u_j}$ where $u_i$ and $u_j$ belong to different $\sigma$-orbits. This is quite easy by the previous remark.

{\verb"Step 6:  Length comparisons."}\\
Fixing a length function at step 2 for the efficient representative $\mathcal{E}_{base}$ is a normalisation. All SC-representatives in the cycles $\mathcal{C}_{u_i}$ are obtained from 
$\mathcal{E}_{base}$ by a sequence of foldings and glueings. By the remark in step 3 all length functions are then well defined.

$\bullet$ For one cycle $\mathcal{C}'_{u_i}$: an elementary SC-folding is a composition of standard foldings and the total length is changed according to proposition 4.2 and lemma 6.3.
At the end of the primitive cycle $\mathcal{C}'_{u_i}$, ie when 
$\mathcal{E}^{p_j}_{(i_j)} \simeq \mathcal{E}^1_{(i_j)}$, and $p_j$ is minimal, we compute the total length $ l ( \mathcal{E}^{p_j}_{(i_j)})$ and we compare it with $\frac{1}{\lambda^q} l ( \mathcal{E}^{1}_{(i_j)})$, as in theorem 4.9.

Observe that $\lambda > 1$ and $q$ are known and, since $\lambda > 1$, only a finite precision is sufficient for the comparison.
This enables to check whether the cycle $\mathcal{C}_{u_i}$ of theorem 5.1 is primitive or not. If not then $\mathcal{C}_{u_i}$ is a finite concatenation of the primitive cycle $\mathcal{C}'_{u_i}$ leading to the existence of a root via one of the propositions 5.3, 5.5. \\

$\bullet$ If step 5 gives two equivalent cycles 
$\mathcal{C}'_{u_i} \simeq \mathcal{C}'_{u_j}$ then we need to compare the lengths between two equivalent SC-representatives $\mathcal{E}^{k}_{(u_j)}$
and $\mathcal{E}^{p}_{(u_i)}$. The equality of the length function implies the existence of a symmetry by proposition 5.7. The non equality shows the existence of a root via one of the propositions 5.4 or 5.6.
As above the precision required for checking the equality/non equality is finite.

Now all the ingredients are computed to check which of the propositions 5.3--5.7 applies. Therefore we have decided if our element $[f]$ admits or not a root or a symmetry.

{\verb"Step 7:  Compute a root or a symmetry."}\\
$\bullet$ The symmetry given by proposition 5.7 is easier to make explicit. It is induced by the combinatorial equivalence $C: \Gamma \rightarrow \Gamma$
of proposition 5.7 between two SC-representatives in different cycles. \\
$\bullet$ From proposition 5.2, a root is given by the transition map between two equivalent SC-representatives. 
In practise this means to follow, along the cycles, how the embedding $h_{\Gamma}$ changes. This is possible but not very simple since it would require to express the embedding in a combinatorial way. 
There is a much simpler way with the tools we already have. 
The most complicated situation is when two primitive cycles are equivalent.
We start at some SC-representative $(\Gamma, \Psi, h_{\Gamma}) = \mathcal{E}^{1}_{(u_i)} \in \mathcal{C}_{u_i}$ and then apply the uniquely defined sequence of foldings given by the primitive cycle followed by the (unique) equivalence homeomorphism obtained by 
$\mathcal{E}^{1}_{u_i} \simeq \mathcal{E}^{k}_{u_j}$. 
This sequence of foldings followed by a graph homeomorphism defines a unique efficient representative $\phi: \Gamma \rightarrow \Gamma$ of $[g]$ that is a root of $[f]$. 
This completes the proof of the main theorem.$\square$

\section{Example}

In this section we present only one simple example in the mapping class group of the punctured disc that is classically given as the braid group modulo the centre.
More complicated examples would require much more space and even for this simple example we give only a few steps. 

Consider the braid $\beta\in B_4$,  given with the classical braid generators by $\beta=\sigma_3\overline{\sigma_2}\sigma_1$.  It defines a pseudo-Anosov element in the mapping class group of the 4th punctured disc. An efficient representative is given on the graph presented in figure 8, by the following combinatorial map:
$$\begin{array}{ccc}
 a & \longmapsto & a t_1 \overline{a}b\\
 b  & \longmapsto & ec\\
 c  & \longmapsto & d\\
 d  & \longmapsto & c\overline{t_3}\overline{c}\overline{e}a\\
 e  & \longmapsto & ec\overline{t_3}\overline{c}\overline{e}a t_1\overline{a}ect_3\overline{c}\ .
\end{array}$$


\begin{figure}[htb]
\centerline {\includegraphics[height=40mm]{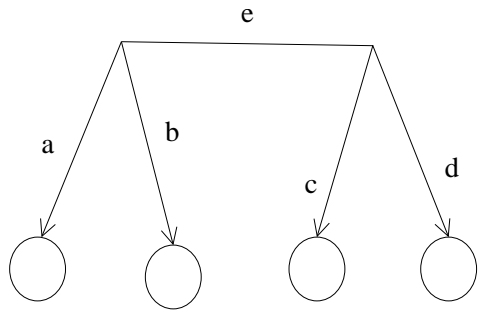}}
\label{fig:graphini}
\caption{  The graph of the initial efficient representative.}
\end{figure}

It is not a single cut representative since there are 2 fixed illegal turns: $(b,e)$ and $(d,\overline{e})$, corresponding to 2 unstable separatrices that are fixed under the homeomorphism, respectively $u_1$ and $u_2$. This efficient representative will be our "base" representative $\mathcal{E}_{base}$ of step 2 above.

{\bf Glueing the separatrix $u_1$ } ( turn $(d,\overline{e})$ ): \\
This glueing operation will be presented as a sequence of classical foldings.\\
step 1 - folding $(\overline{e},d)$, then $(\overline{e},\overline{t_4})$, then $(\overline{e},\overline{d})$.\\ 
After this sequence of foldings we obtain the following graph,  embedding and map:

\begin{figure}[htb]
\centerline {\includegraphics[height=40mm]{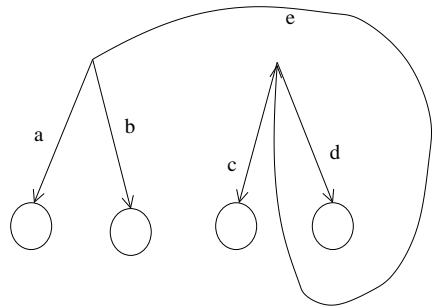}}
\label{fig:graph1}
\caption{  First folding .}
\end{figure}

$$\begin{array}{ccc}
 a & \longmapsto & a t_1 \overline{a}b\\
 b  & \longmapsto & edt_4\overline{d}c\\
 c  & \longmapsto & d\\
 d  & \longmapsto & c\overline{t_3}\overline{c}d\overline{t_4}\overline{d}\overline{e}a\\
 e  & \longmapsto & edt_4\overline{d}\ .
\end{array}$$

Then we apply four similar steps that are uniquely defined since there is only one turn to be folded at each step. 
After these four steps we obtain the graph, embedding and map as shown in the next figure.

\begin{figure}[htb]
\centerline {\includegraphics[height=40mm]{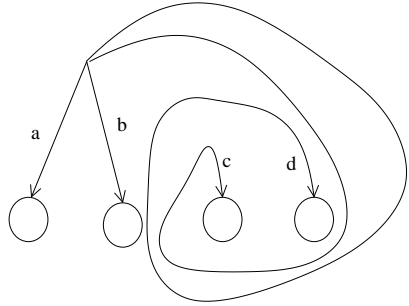}}
\label{fig:graph4}
\caption{  A single cut representative.}
\end{figure}

 $$\begin{array}{ccc}
 a & \longmapsto & a t_1 \overline{a}b\\
 b  & \longmapsto & ct_3\overline{c}dt_4\overline{d}c\\
  c  & \longmapsto & c\overline{t_3}\overline{c}d\\
d & \longmapsto & d\overline{t_4}\overline{d}c\overline{t_3}\overline{c}a
\end{array}$$
This representative is single cut, relative to $u_2$.

{\bf Glueing the separatrix $u_2$ } ( at the turn $(b,e)$).\\
Analogous operations show that a single cut representative relative to the other separatrix is given on the following graph and embedding:
\begin{figure}[htb]
\centerline {\includegraphics[height=40mm]{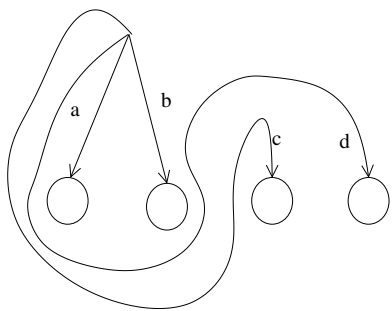}}
\label{fig:graphtgc2}
\caption{Another single cut representative.}
\end{figure}

 $$\begin{array}{ccc}
 a & \longmapsto & a t_1 \overline{a}b\\
 b  & \longmapsto & b\overline{t_2}\overline{b}at_1\overline{a}c\\
  c  & \longmapsto & ct_3\overline{c}d\\
d & \longmapsto & at_1\overline{a}bt_2\overline{b}a 
\end{array}$$
The two graphs of figure 10 and 11 are obviously homeomorphic and the two representatives are combinatorially equivalent under the map:  $(a,b,c,d)\mapsto(c,d,b,a)$. 
We check that this equivalence is realised by a homeomorphism of the punctured disc that is induced by the braid: $\alpha=\sigma_3^2\sigma_2\sigma_1\sigma_3\sigma_2$. 
It follows from the main theorem that 
$$\alpha \beta=\beta\alpha,$$
this equality is easy to check directly.
Since $\alpha^2=\Delta$, where $\Delta$ is the classical Garside braid generating the centre, then $\alpha$ is finite order.
Observe that for this example we didn't compute the whole cycles of theorem 5.1 since the first SC-representatives relative to each fixed separatrix were already equivalent.

%
%
%
%

\end{document}